\numberwithin{equation}{section}
\numberwithin{figure}{section}
\theoremstyle{plain}
\newtheorem{thm}{\protect\theoremname}[section]
  \theoremstyle{definition}
  \newtheorem{defn}[thm]{\protect\definitionname}
  \theoremstyle{definition}
  \newtheorem{problem}[thm]{\protect\problemname}
  \theoremstyle{definition}
  \newtheorem{example}[thm]{\protect\examplename}
  \theoremstyle{plain}
  \newtheorem{conjecture}[thm]{\protect\conjecturename}
  \theoremstyle{plain}
  \newtheorem{lem}[thm]{\protect\lemmaname}
  \theoremstyle{plain}
  \newtheorem{cor}[thm]{\protect\corollaryname}
  \theoremstyle{plain}
  \newtheorem{prop}[thm]{\protect\propositionname}
  \theoremstyle{remark}
  \newtheorem{claim}[thm]{\protect\claimname}
  \providecommand{\claimname}{Claim}
  \providecommand{\conjecturename}{Conjecture}
  \providecommand{\corollaryname}{Corollary}
  \providecommand{\definitionname}{Definition}
  \providecommand{\examplename}{Example}
  \providecommand{\lemmaname}{Lemma}
  \providecommand{\problemname}{Problem}
  \providecommand{\propositionname}{Proposition}
\providecommand{\theoremname}{Theorem}
\begin{document}
\global\long\def\f{\mathcal{F}}
\global\long\def\p{\mathcal{P}}
\global\long\def\j{\mathcal{J}}
\global\long\def\g{\mathcal{G}}

\title{On set systems without a simplex-cluster and the Junta method}

\author{Noam Lifshitz }
\begin{abstract}
A family $\{A_{0},\ldots,A_{d}\}$ of $k$-element subsets of $[n]=\{1,2,\ldots,n\}$
is called a simplex-cluster if $A_{0}\cap\cdots\cap A_{d}=\varnothing$,
$|A_{0}\cup\cdots\cup A_{d}|\le2k$, and the intersection of any $d$
of the sets in $\{A_{0},\ldots,A_{d}\}$ is nonempty. In 2006, Keevash
and Mubayi conjectured that for any $d+1\le k\le\frac{d}{d+1}n$,
the largest family of $k$-element subsets of $[n]$ that does not
contain a simplex-cluster is the family of all $k$-subsets that contain
a given element. We prove the conjecture for all $k\ge\zeta n$ for
an arbitrarily small $\zeta>0$, provided that $n\ge n_{0}(\zeta,d)$.

We call a family $\{A_{0},\ldots,A_{d}\}$ of $k$-element subsets
of $[n]$ a $(d,k,s)$-cluster if $A_{0}\cap\cdots\cap A_{d}=\varnothing$
and $|A_{0}\cup\cdots\cup A_{d}|\le s$. We also show that for any
$\zeta n\le k\le\frac{d}{d+1}n$ the largest family of $k$-element
subsets of $\left[n\right]$ that does not contain a $(d,k,(\frac{d+1}{d}+\zeta)k)$-cluster
is again the family of all $k$-subsets that contain a given element,
provided that $n\ge n_{0}(\zeta,d)$. 

Our proof is based on the junta method for extremal combinatorics
initiated by Dinur and Friedgut and further developed by Ellis, Keller,
and the author. 
\end{abstract}

\maketitle

\section{Introduction}

Throughout the paper, we denote $\left[n\right]=\left\{ 1,\ldots,n\right\} ,$
we write $\binom{\left[n\right]}{k}$ for the family of all $k$-element
subsets of $\left[n\right]$, and given a set $S$, we write $\mathcal{P}\left(S\right)$
for the power set of $S$. A family $\f\subseteq\p\left(\left[n\right]\right)$
is called \emph{intersecting} if the intersection of any two sets
in $\f$ is nonempty. A \emph{star} is the family of all sets that
contain a given element.

\emph{Intersection problems for finite sets} study the problem: `how
large can a family of subsets of $\left[n\right]$ be given some restrictions
on the unions and intersections of its elements?' The earliest result
of this class is the Erd\H{o}s-Ko-Rado (EKR) Theorem \cite{erdos1961intersection}
from 1961. 
\begin{thm}[\cite{erdos1961intersection}]
 Let $k\le\frac{n}{2},$ and let $\f\subseteq\binom{\left[n\right]}{k}$
be an intersecting family. Then $\left|\f\right|\le\binom{n-1}{k-1}.$
If $k<\frac{n}{2},$ then $\left|\f\right|=\binom{n-1}{k-1}$ if and
only if $\f$ is a star. 
\end{thm}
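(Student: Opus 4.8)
The plan is to prove the bound by Katona's cyclic–permutation (``circle'') method and then to extract the characterisation of the extremal families from the equality case of the double count. A \emph{cyclic ordering} of $[n]$ is a placement of the $n$ elements at the $n$ positions around a circle (so there are $n!$ of them), and an \emph{arc} is a set of $k$ cyclically consecutive positions together with the elements placed there. The crux is the claim that for $k\le n/2$ and any fixed cyclic ordering $\sigma$, an intersecting family $\f\subseteq\binom{[n]}{k}$ contains at most $k$ arcs of $\sigma$. If no arc of $\sigma$ lies in $\f$ this is trivial; otherwise fix an arc $A\in\f$. There are exactly $2(k-1)$ arcs of $\sigma$ other than $A$ that meet $A$, and they can be split into $k-1$ pairs, each pair consisting of one arc overlapping the ``right'' end of $A$ and one overlapping the ``left'' end, chosen so that their union is a block of $2k$ consecutive positions; since $2k\le n$ the two arcs in each pair are disjoint, so $\f$, being intersecting, contains at most one of them. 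Hence $\f$ contains at most $1+(k-1)=k$ arcs of $\sigma$.

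Now I would double count the pairs $(\sigma,A)$ with $A\in\f$ an arc of $\sigma$. For each fixed $A\in\binom{[n]}{k}$ there are $n\cdot k!\,(n-k)!$ cyclic orderings in which $A$ is an arc, so the number of such pairs equals $|\f|\cdot n\cdot k!\,(n-k)!$; on the other hand it is at most $k\cdot n!$ by the arc bound above. Therefore
\[
|\f|\ \le\ \frac{k\cdot n!}{n\cdot k!\,(n-k)!}\ =\ \binom{n-1}{k-1}.
\]

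For the equality case when $k<n/2$: if $|\f|=\binom{n-1}{k-1}$, then equality must hold in the count for \emph{every} cyclic ordering $\sigma$, i.e.\ each $\sigma$ has exactly $k$ arcs in $\f$; re-examining the arc bound, whenever one arc $A\in\f$ is an arc of $\sigma$, the $k$ arcs of $\f$ in $\sigma$ consist of $A$ together with exactly one arc from each of the $k-1$ straddling pairs. One then argues that, because $k<n/2$, such a family of $k$ pairwise-intersecting arcs of $\sigma$ must be the set of all $k$ arcs of $\sigma$ through a single common position; and since every member of $\binom{[n]}{k}$ is an arc in some cyclic ordering and any two cyclic orderings are linked by a chain of adjacent transpositions, this common position corresponds throughout to the \emph{same} element $x\in[n]$. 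Hence every set in $\f$ contains $x$, and as $|\f|=\binom{n-1}{k-1}$ equals the size of the star at $x$, $\f$ is exactly that star.

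The only nontrivial point is this last step: ruling out ``mixed'' choices of arcs and checking that the common element is forced to be globally constant is where the strict inequality $k<n/2$ is genuinely used (at $k=n/2$, complementation produces extremal non-stars). If that propagation turned out to be cumbersome I would instead obtain uniqueness by compression: the $(i,j)$-shifts preserve $|\f|$ and the intersecting property, a fully shifted intersecting family of size $\binom{n-1}{k-1}$ is visibly the star at $1$, and a short argument shows that for $k<n/2$ this forces $\f$ itself to be a star.
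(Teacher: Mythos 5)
The paper offers no proof of this statement at all --- it is quoted directly from \cite{erdos1961intersection} as the classical Erd\H{o}s--Ko--Rado theorem --- so your argument has to be judged on its own, and it is essentially sound. The bound via Katona's circle method is correct and complete: the pairing of the $2(k-1)$ arcs meeting a fixed arc $A\in\f$ into $k-1$ disjoint pairs uses exactly $2k\le n$, and the double count $|\f|\cdot n\cdot k!\,(n-k)!\le k\cdot n!$ gives $|\f|\le\binom{n-1}{k-1}$. The equality case, as you yourself flag, is where the real work lies, and as written it is a sketch rather than a proof. Your key claim --- that for $k<n/2$ any $k$ pairwise-intersecting arcs of a fixed cyclic ordering are precisely the $k$ arcs through one common position --- is in fact true, but it needs an argument: writing the $k$ arcs as $A$ together with one arc from each straddling pair, any ``inversion'' (a left-type choice at an earlier internal boundary than a right-type choice) of minimal index gap must have gap $1$, and two such arcs are disjoint once $n\ge 2k+1$; this is what excludes the wrap-around configurations that could conceivably appear in the range $2k<n\le 3k-3$ and forces the choices into a threshold pattern, i.e.\ all arcs through a single position. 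Likewise the propagation step, that the distinguished element is the same for every cyclic ordering, is asserted via adjacent transpositions but not carried out; it can be made to work, though the shifting/compression fallback you mention is the cleaner and standard way to settle uniqueness for $k<n/2$. So: correct strategy, correct and complete for the inequality, and a correct but unfinished outline for the characterisation of equality --- write out either the threshold-pattern lemma plus the propagation, or the compression argument, and the proof is complete.
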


Intersection problems for finite sets have become a prolific research
field in extremal combinatorics, and numerous generalizations of the
EKR theorem were obtained. (See the excellent survey of Frankl and
Tokushige \cite{frankl2016survey}). Let us mention one of these generalizations
that we will use in the sequel:

A family $\f\subseteq\binom{\left[n\right]}{k}$ is said to be $s$-wise
intersecting if it does not contain $s$ sets whose intersection is
empty. The following theorem, proved by Frankl \cite{frankl1976sperner},
generalizes the EKR theorem to $s$-wise intersecting families.
\begin{thm}[Frankl, 1976]
\label{thm:Frankl's Theorem} Let $k\le\frac{\left(s-1\right)n}{s},$
and let $\f\subseteq\binom{\left[n\right]}{k}$ be an $s$-wise intersecting
family. Then $\left|\f\right|\le\binom{n-1}{k-1}.$ If $k<\frac{\left(s-1\right)n}{s},$
then $\left|\f\right|=\binom{n-1}{k-1}$ if and only $\f$ is a star.
\end{thm}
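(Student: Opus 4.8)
The plan is to prove the size bound via Katona's cyclic-permutation averaging, reducing everything to a lemma about arcs on a circle, and then to read off the extremal characterisation from the equality case. For a cyclic ordering $\sigma$ of $[n]$, call a $k$-subset a \emph{$\sigma$-arc} if its elements form a block of $k$ consecutive positions in $\sigma$; there are exactly $n$ of these. By symmetry every $k$-subset is a $\sigma$-arc for the same number of orderings, so double counting the pairs $(\sigma,A)$ with $A$ a $\sigma$-arc lying in $\f$ shows that $|\f|/\binom{n}{k}$ equals $\tfrac1n$ times the average, over cyclic orderings $\sigma$, of the number of $\sigma$-arcs contained in $\f$. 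Since any sub-family of an $s$-wise intersecting family is again $s$-wise intersecting, the bound $|\f|\le\binom{n-1}{k-1}=\tfrac{k}{n}\binom{n}{k}$ follows once we prove the arc lemma: \emph{an $s$-wise intersecting family of length-$k$ arcs on the cycle $\mathbb{Z}_{n}$ with $k\le\tfrac{(s-1)n}{s}$ has at most $k$ members.}

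\emph{The arc lemma.} Identify a length-$k$ arc with its starting point, so the family is $\{A^{x}:x\in X\}$ with $A^{x}=\{x,x+1,\dots,x+k-1\}$ and $X\subseteq\mathbb{Z}_{n}$. A point $p$ lies in $A^{x}$ exactly when $x$ lies in the length-$k$ window $\{p-k+1,\dots,p\}$, so a subfamily $\{A^{x}:x\in Y\}$ has a common point iff $Y$ is contained in a window of $k$ consecutive points of $\mathbb{Z}_{n}$; hence the family is $s$-wise intersecting iff every subset of $X$ of size at most $s$ fits inside some such window. Suppose $|X|\ge k+1$. The gaps between cyclically consecutive elements of $X$ are positive and sum to $n$, so each is at most $n-k$, and $X$ itself lies in no length-$k$ window. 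The aim is to exhibit a subset of $X$ of size at most $s$ that also lies in no length-$k$ window, contradicting the characterisation above: concretely, one cuts the circle at $\min(s,k+1)$ points of $X$ in such a way that every resulting super-gap has length at most $n-k$, which forces the chosen points to lie in no window. That such a cut exists is exactly what the hypothesis buys, in the equivalent form $s(n-k)\ge n$, i.e.\ $(s-1)(n-k)\ge k$: there is enough total budget, and enough room to split the portion of the circle spanned by $X$ even when that portion is short. Carrying out this selection for arbitrarily clustered $X$ is the step I expect to be the main obstacle.

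\emph{Uniqueness.} Assume $k<\tfrac{(s-1)n}{s}$ and $|\f|=\binom{n-1}{k-1}$. Then equality must hold in the averaging for \emph{every} cyclic ordering $\sigma$, so for each $\sigma$ the $\sigma$-arcs of $\f$ number exactly $k$ and hence form a maximum $s$-wise intersecting arc family. A sharpened version of the arc lemma — in which the \emph{strict} inequality $k<\tfrac{(s-1)n}{s}$ rules out the degenerate extremal configurations — shows that such a family must be a block of $k$ consecutive arcs, i.e.\ all $\sigma$-arcs through a single point $p_{\sigma}$. Now any two members $A,B\in\f$ are simultaneously $\sigma$-arcs for a suitable $\sigma$ (arrange the ground set around a circle as $A\setminus B$, then $A\cap B$, then $B\setminus A$, then the rest), so $p_{\sigma}\in A\cap B$ and $\f$ is in fact $2$-wise intersecting. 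If $k<n/2$, the uniqueness statement in the Erd\H{o}s--Ko--Rado theorem now forces $\f$ to be a star. In the remaining range $n/2\le k<\tfrac{(s-1)n}{s}$, which occurs only for $s\ge3$, one propagates $p_{\sigma}$ across cyclic orderings differing by a single adjacent transposition, or passes to complements and shows that a maximum-size family of $(n-k)$-subsets no $s$ of which cover $[n]$ must be a co-star; in either case $\f$ is a star.

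\emph{An alternative route.} The theorem can also be proved by compression: the shift operators $S_{ij}$ preserve both $|\f|$ and the $s$-wise intersecting property (a brief case check), so one may assume $\f$ is shifted, and then a member of $\f$ avoiding the element $1$ drags its $1$-shifts into $\f$ and, once the remaining room $n-k$ is small enough (which is precisely the content of $k\le\tfrac{(s-1)n}{s}$), these yield at most $s$ members of $\f$ with empty intersection. I would nonetheless run the cyclic argument as the primary one, since it cleanly isolates a single hard lemma.
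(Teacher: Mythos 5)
First, a point of comparison: the paper does not prove this statement at all — Theorem \ref{thm:Frankl's Theorem} is quoted as a known result of Frankl (1976) and used as a black box, so there is no internal proof to measure your argument against; your proposal has to stand on its own as a proof of Frankl's theorem.

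As it stands it does not. The reduction to the arc lemma via Katona's cyclic averaging is fine, but the arc lemma is the entire content of the theorem in this approach, and you explicitly leave it unproved (``the step I expect to be the main obstacle''). It is not a routine selection argument. Writing $m=n-k$, the natural greedy walk (always hop to the farthest point of $X$ within distance $m$) only guarantees, from an arbitrary start, that $s-1$ hops cover at least $(s-1)m-(m-1)$ positions, which suffices only when $n\le (s-1)m+1$, whereas the hypothesis is the weaker $n\le sm$; likewise the rigid equally-spaced patterns $\{j,j+m,\dots,j+(s-1)m\}$ are killed by a set $Z$ of only $m-1$ missing points unless $s\ge m$. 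So ``there is enough total budget'' is genuinely not a proof: one needs, e.g., a carefully chosen starting point (just after a longest run of $\mathbb{Z}_n\setminus X$) together with additional counting arguments to handle the regime $(s-1)m+1<n\le sm$ when all missing runs are short, and the boundary case $m\mid n$ separately. Until that lemma is established, the size bound is not proved.

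The uniqueness half has larger gaps. It invokes a ``sharpened arc lemma'' (every maximum $s$-wise intersecting arc family is the $k$ arcs through a point) which is also unproved, and which is actually \emph{false} at the boundary $k=\frac{(s-1)n}{s}$: for $n=30$, $k=20$, $s=3$ the starting-point set $[0,9]\cup[15,24]$ gives $20$ arcs, any three of which share a point, but they have no common point. So any proof of the sharpened lemma must genuinely use the strict inequality, and you cannot treat it as a cosmetic variant of the first lemma. Moreover, even granting it, the passage from ``for every cyclic order the arcs of $\mathcal{F}$ form a star'' to ``$\mathcal{F}$ is a star'' is only settled in your sketch for $k<n/2$ via EKR; for $n/2\le k<\frac{(s-1)n}{s}$ you offer two one-line alternatives (propagating $p_{\sigma}$ across adjacent transpositions, or a complementation argument) without carrying either out. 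The alternative shifting route you mention is closer to Frankl's actual proof, but the decisive step there — that a shifted $s$-wise intersecting family with $k\le\frac{(s-1)n}{s}$ containing a set missing the element $1$ yields $s$ sets with empty intersection — is again asserted, not proved. In short: correct framework, but the two lemmas that carry all the difficulty are missing.
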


\subsection{Set families without a cluster}

We shall be concerned with a generalization of the EKR Theorem, where
the forbidden configuration is known as a \emph{$\left(d,k,s\right)$-cluster}. 
\begin{defn}
A family $\left\{ A_{0},\ldots,A_{d}\right\} \subseteq\binom{\left[n\right]}{k}$
is called a $\left(d,k,s\right)$-cluster if 
\[
\left|A_{0}\cup\cdots\cup A_{d}\right|\le s\text{ and }A_{0}\cap\cdots\cap A_{d}=\varnothing.
\]
 
\end{defn}

We write $f\left(d,k,s,n\right)$ for the largest size of a family
$\mathcal{F}\subseteq\binom{\left[n\right]}{k}$ that does not contain
a $\left(d,k,s\right)$-cluster. Note that $f\left(d,k,s,n\right)\ge\binom{n-1}{k-1}$,
since the star does not contain a $\left(d,k,s\right)$-cluster. 

In this paper we study the following problem.
\begin{problem}
\label{Problem: Cluster} For what values of $d,k,s,n$ do we have
$f\left(d,k,s,n\right)=\binom{n-1}{k-1}$?

This problem generalizes several questions that were studied extensively,
and are still open. Before we discuss the history of the problem,
we give a few basic observations. 
\end{problem}

\begin{enumerate}
\item Problem \ref{Problem: Cluster} makes sense only for $s\ge\frac{d+1}{d}k,$
since no $\left(d,k,s\right)$-cluster exists in $\binom{\left[n\right]}{k}$
if $s<\frac{d+1}{d}k.$ 
\item A $\left(d+1\right)$-wise intersecting family in $\binom{\left[n\right]}{k}$
is free of a $\left(d,k,s\right)$-cluster for any value of $s$.
On the other hand, if $s\ge\min\left(\left(d+1\right)k,n\right),$
then any $d+1$ sets $A_{0},\ldots,A_{d}\in\binom{\left[n\right]}{k}$
whose intersection is empty form a $\left(d,k,s\right)$-cluster.
Hence, for such values of $d,k,s,n$, a family that does not contain
a $\left(d,k,s\right)$-cluster is the same as a $\left(d+1\right)$-wise
intersecting family. Thus, in this case the problem is settled by
Theorem \ref{thm:Frankl's Theorem}.
\item The function $f$ is decreasing in $s$. Combining this fact with
the inequality $f\left(d,k,s,n\right)\ge\binom{n-1}{k-1},$ we obtain
that for any $s_{1}<s_{2}$ such that $f\left(d,k,s_{1},n\right)=\binom{n-1}{k-1}$
we have $f\left(d,k,s_{2},n\right)=\binom{n-1}{k-1}.$
\end{enumerate}
As mentioned above, different special cases of Problem \ref{Problem: Cluster}
were studied in numerous works. In 1980, Katona considered the problem
of determining $f\left(d,k,s,n\right)$ in the case where $d=2$:
\begin{problem}[Katona, 1980]
 How large can a family $\f\subseteq\binom{\left[n\right]}{k}$ be
if $\f$ does not contain sets $A_{1},A_{2},A_{3}$, such that $A_{1}\cap A_{2}\cap A_{3}=\varnothing$
and $\left|A_{1}\cup A_{2}\cup A_{3}\right|\le s$?
\end{problem}

In 1983, Frankl and Füredi \cite{frankl1983new} gave the following
example that shows that the answer to our Problem \ref{Problem: Cluster}
is negative if $s<2k$ and $k\le c\log n$ for a sufficiently small
constant $c$.
\begin{example}
\label{Example Frankl-Furedi} Partition $\left[n\right]$ into sets
$X_{1},\ldots,X_{k}$ of equal size. Let $\mathcal{\mathcal{G}}\subseteq\binom{\left[n\right]}{k}$
be the family of all sets that intersect each $X_{i}$ in a single
vertex. Then $\mathcal{\mathcal{G}}$ is easily seen to be free of
any $\left(d,k,s\right)$-cluster for any $s<2k$. Note that $\left|\mathcal{G}\right|=\left(\frac{n}{k}\right)^{k}$,
and that $\left(\frac{n}{k}\right)^{k}\ge\binom{n-1}{k-1}$, provided
that $k\le c\log n$ for a sufficiently small constant $c$. Hence,
for such $k$ and any $s<2k$ we have $f\left(d,k,s,n\right)>\binom{n-1}{k-1}.$
\end{example}

Frankl and Füredi also showed that $f\left(2,k,2k,n\right)=\binom{n-1}{k-1}$
for any $n\ge k^{2}+3k$ and conjectured that $f\left(2,k,2k,n\right)=\binom{n-1}{k-1}$
for all $k\le\frac{2n}{3}.$ In 2006 Mubayi \cite{mubayi2006erdHos}
proved this conjecture. He also made the following more general conjecture. 
\begin{conjecture}[Mubayi 2006]
\label{Conj Mubayi }Let $d+1\le k\le\frac{d}{d+1}n.$ Then $f\left(d,k,2k,n\right)=\binom{n-1}{k-1}.$ 
\end{conjecture}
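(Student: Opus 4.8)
The plan is to split the range $d+1\le k\le\frac{d}{d+1}n$ into three pieces, the junta method being the right tool only near the top. Two easy reductions come first. If $k\ge n/2$ then $2k\ge n\ge|A_0\cup\cdots\cup A_d|$ for all $A_0,\dots,A_d\in\binom{[n]}{k}$, so a family is free of a $(d,k,2k)$-cluster precisely when it is $(d+1)$-wise intersecting; as $k\le\frac{d}{d+1}n$ is exactly the hypothesis of Frankl's Theorem~\ref{thm:Frankl's Theorem} with $s=d+1$, this yields $f(d,k,2k,n)=\binom{n-1}{k-1}$, uniquely attained by a star when $k<\frac{d}{d+1}n$. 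Secondly, when $n$ exceeds a suitable polynomial $P(k,d)$ I would run the delta-system (sunflower) argument of Frankl and F\"uredi, extended from $d=2$ to general $d$: a cluster-free family of more than $\binom{n-1}{k-1}$ sets contains, after a bounded-kernel decomposition, sets whose traces on a small common ground set already yield a $(d,k,2k)$-cluster once $n$ is that large. Since the case $d=2$ is the theorem of Mubayi quoted above, it remains to treat $d\ge3$ with $d+1\le k<n/2$ and $n<P(k,d)$; the finitely many remaining pairs $(n,k)$ with $n$ below a $d$-dependent constant are a direct check, so $n$ may be taken large. Fix a small $\zeta>0$ and split the survivors at $k=\zeta n$.

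\textbf{The range $\zeta n\le k<n/2$.} Here I would run the junta method of Dinur--Friedgut as developed by Ellis, Keller and the author. Let $\f\subseteq\binom{[n]}{k}$ be $(d,k,2k)$-cluster-free with $|\f|\ge\binom{n-1}{k-1}$, and set $p=k/n\in[\zeta,\frac12)$. Via the standard passage between the uniform and $p$-biased models, the hypothesis yields, on a large ground set $[m]$, a $p$-biased family of measure at least $p$ with no $d+1$ sets whose total intersection is empty and whose union has size at most $2pm$. A hypercontractivity / level-$d$ inequality then forces this family to be $\varepsilon$-close, in $p$-biased measure, to a junta $\j$ depending on only $j_{0}=j_{0}(\zeta,d)$ coordinates, for any prescribed $\varepsilon>0$. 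One shows $\j$ must be a subfamily of a dictatorship $\{A:i\in A\}$: otherwise one picks $d+1$ sets realizing the junta pattern with empty total intersection and, using the $\varepsilon$-approximation together with the slack of size $n(1-2p)>0$ outside their union (here $p<\frac12$ is essential), completes them to an actual $(d,k,2k)$-cluster in $\f$, a contradiction. Finally the usual ``approximate implies exact'' bootstrap turns ``$\f$ is within a tiny constant of the star $\f_{i}$'' into ``$\f\subseteq\f_{i}$'', whence $|\f|\le\binom{n-1}{k-1}$, with equality only for a star.

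\textbf{The range $d+1\le k<\zeta n$, and the main obstacle.} This is the crux. When $p=k/n$ is small the biased-measure machinery is useless, and one must extend the combinatorial argument by which Mubayi closed the analogous gap for $d=2$. Freeness from a $(d,k,2k)$-cluster is invariant under the $(i,j)$-shift, so $\f$ may be taken shifted; a delta-system decomposition then organizes $\f$ into bounded-size kernels and reduces the problem to the trace structure on each kernel. The target dichotomy is that either $\f$ is a star, or inside a bounded core of $\f$ there are sets $A_0,\dots,A_d$ with $A_0\cap\cdots\cap A_d=\varnothing$ whose unions are small enough that, since $k\le\frac{d}{d+1}n$ keeps $|[n]\setminus(A_0\cup\cdots\cup A_d)|$ large, one can greedily enlarge $A_1,\dots,A_d$ back to $k$-sets while keeping the total intersection empty and the total union at most $2k$ --- the forbidden cluster. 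The hard part is carrying this out for $d\ge3$: Mubayi's argument exploits features special to three sets, while for $d+1$ sets the bookkeeping of which kernels admit a valid extension to a cluster of union $\le2k$, and a usable bound on the number of ``bad'' kernels that do not, is where essentially all the difficulty lies. A subsidiary difficulty, internal to the junta range, is that the slack $n(1-2p)$ shrinks as $p\to\frac12$, so the uniform-to-biased transfer and the cluster-completion step must be quantified with care near that endpoint.
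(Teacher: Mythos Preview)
This statement is listed in the paper as a \emph{conjecture}, not a theorem: the paper does not prove it in full. What the paper establishes (Theorem~\ref{thm:exact cluster result}) is only the linear range $\zeta n\le k\le\frac{d}{d+1}n$ for $n\ge n_{0}(\zeta,d)$, and in fact under the stronger hypothesis that $\f$ avoids a $\bigl(d,k,(\tfrac{d+1}{d}+\zeta)k\bigr)$-cluster rather than merely a $(d,k,2k)$-cluster.

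Your decomposition into ranges is sensible, and for the pieces that \emph{are} settled your attributions are roughly right: the case $k\ge n/2$ reduces to Frankl's Theorem~\ref{thm:Frankl's Theorem} (the paper uses this too, at the very end of the proof of Theorem~\ref{thm:exact cluster result}); the case of $n$ large in terms of $k$ and $d$ is the Mubayi--Ramadurai and F\"uredi--\"Ozkahya result quoted in the introduction, not a new delta-system argument; and for $\zeta n\le k<n/2$ your junta sketch is in the same spirit as the paper's, though the technical route differs --- the paper works via the regularity lemma of \cite{ellis2016stabilityfor}, Friedgut's junta theorem applied to the monotone closure $\f^{\uparrow l}$ with $l\approx(1+\zeta')k$, a union bound to locate a $\bigl(d,l,\tfrac{d+1}{d}l\bigr)$-cluster among the up-sets, and a Kruskal--Katona bootstrap, rather than a direct $p$-biased hypercontractivity argument exploiting the slack $n(1-2p)$.

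The genuine gap is precisely where you put it: the regime $d+1\le k<\zeta n$ with $n$ not large as a function of $k$ (think $k\sim\sqrt{n}$). There you do not actually prove anything --- you describe a ``target dichotomy'', note that ``Mubayi's argument exploits features special to three sets'', and conclude that the bookkeeping for $d\ge 3$ ``is where essentially all the difficulty lies.'' That is an accurate diagnosis, not a proof. The paper does not cover this range either; Conjecture~\ref{Conj Mubayi } for general $d$ remains open in this intermediate regime (cf.\ Conjecture~\ref{conj:exact result}). So your proposal is not a proof of the conjecture, and the paper does not give one either --- the difference is that the paper only claims the partial result it actually proves.
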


In 2007, Mubayi \cite{mubayi2007intersection} proved his conjecture
in the case where $d=3,$ $k$ is fixed and $n$ is sufficiently large.
He has also showed a \emph{stability result} for general fixed $d$.
Specifically, he proved that if $k,d$ are fixed and $n$ tends to
infinity, then any family $\mathcal{F}\subseteq\binom{\left[n\right]}{k}$
that is free of $\left(d,k,2k\right)$-cluster and whose size is $\binom{n-1}{k-1}\left(1-o\left(1\right)\right)$,
must satisfy $\left|\mathcal{F}\backslash\mathcal{S}\right|=o\left(\binom{n-1}{k-1}\right)$
for some star $\mathcal{S}.$ In 2009, Mubayi and Ramadurai \cite{mubayi2009set}
applied Mubayi's stability result and proved that Conjecture \ref{Conj Mubayi }
holds for any fixed $k$ and $d$, provided that $n$ is sufficiently
large. In 2009, Füredi and Özkahya \cite{furedi2009unavoidable} gave
a different proof of the result of Mubayi and Ramadurai and showed
that if $k$ and $d$ are fixed and $n$ is sufficiently large, then
any $\mathcal{F}\subseteq\binom{\left[n\right]}{k}$ whose size is
greater than $\binom{n-1}{k-1}$ contains a special kind of a $\left(d,k,2k\right)$-cluster.
Finally, Keevash and Mubayi \cite{keevash2010set} showed that for
a fixed $d$ and an arbitrarily small $\zeta$, there exists some
$T=T\left(d,\zeta\right)$, such that Conjecture \ref{Conj Mubayi }
holds for any $\zeta n\le k\le\frac{n}{2}-T.$

While Example \ref{Example Frankl-Furedi} shows that we cannot hope
to have $f\left(d,k,s,n\right)=\binom{n-1}{k-1}$ if $k\le c\log n$,
this seems to be a little bit too pessimistic if $k\ge C\log n$ for
a sufficiently large constant $C$. Indeed, for such values of $k$
the family given in Example \ref{Example Frankl-Furedi} is smaller
than the star, and so the equality $f\left(d,k,s,n\right)=\binom{n-1}{k-1}$
becomes possible also for $\frac{d+1}{d}k\le s\le2k$. (See Conjecture
\ref{conj:exact result} and Example \ref{exa: Keevash counter }
below). We show that for $k$ linear in $n$, the equality $f\left(d,k,s,n\right)=\binom{n-1}{k-1}$
holds for any $s\ge\left(\frac{d+1}{d}+\zeta\right)k$ for an arbitrarily
small constant $\zeta>0$, provided that $n$ is sufficiently large.
\begin{thm}
\label{thm:exact cluster result}For any $d\in\mathbb{N},\zeta>0$
there exists $n_{0}=n_{0}\left(d,\zeta\right)$, such that the following
holds. Let $n>n_{0}$, let $\zeta n\le k\le\left(\frac{d}{d+1}-\zeta\right)n,$
and let $\f\subseteq\binom{\left[n\right]}{k}$ be a family that does
not contain a $\left(d,k,\left(\frac{d+1}{d}+\zeta\right)k\right)$-cluster.\textup{\emph{
Then}}\textup{ $\left|\mathcal{F}\right|\le\binom{n-1}{k-1},$ }\textup{\emph{with
equality if and only if $\f$ is a star.}} In particular, Conjecture
\ref{Conj Mubayi } holds for any $\zeta n\le k\le\frac{d+1}{d}n,$
provided that $n\ge n_{0}.$
\end{thm}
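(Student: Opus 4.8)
The plan is to use the junta method, following the approach of Dinur–Friedgut and its refinements by Ellis–Keller–Lifshitz. The overall strategy is to show that any cluster-free family $\f\subseteq\binom{[n]}{k}$ with $k$ linear in $n$ must be \emph{essentially} contained in a star, and then use a \emph{bootstrapping} (stability-to-exactness) argument to conclude that if $|\f|\ge\binom{n-1}{k-1}$ then $\f$ is exactly a star. The first main ingredient is a \emph{junta approximation theorem}: there is a constant $j=j(d,\zeta)$ and a $j$-junta $\j\subseteq\binom{[n]}{k}$ (a family depending only on coordinates in some set $J$ with $|J|\le j$) such that $\j$ is itself free of a $(d,k,(\frac{d+1}{d}+\zeta)k)$-cluster and $|\f\setminus\j|\le C\binom{n-2}{k-2}$ for a constant $C=C(d,\zeta)$. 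Such approximation results are by now standard in this area when $k/n$ is bounded away from $0$; the key point is to feed the hypothesis (cluster-freeness) into the ``junta method'' machinery, which reduces the problem to understanding which small juntas can be cluster-free.

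\medskip

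The second step is to classify the cluster-free juntas. I would show that the only $j$-juntas $\j$ that are free of a $(d,k,(\frac{d+1}{d}+\zeta)k)$-cluster and have measure $\mu_{k/n}(\j)$ bounded below by (say) a small absolute constant are those contained in a star, i.e.\ $\j\subseteq\{A:i\in A\}$ for some $i\in J$. The mechanism here is a counting/probabilistic argument on the junta $J$: if $\j$ is not contained in a star, one locates, inside the restriction of $\f$ to a random choice of the coordinates outside $J$, sets $A_0,\dots,A_d$ whose traces on $J$ already force $\bigcap A_i\cap J=\varnothing$ while the union outside $J$ can be kept small because $k$ and $n-k$ are both linear in $n$ (this is exactly where the condition $k\le(\frac{d}{d+1}-\zeta)n$, giving room $|A_0\cup\cdots\cup A_d|\le(\frac{d+1}{d}+\zeta)k<n$, is used, and where the extra slack $\zeta k$ in the union bound lets us absorb the $\le j$ ``wasted'' coordinates in $J$). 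Combined with the approximation theorem, this yields the stability statement: $|\f\setminus\s|\le C\binom{n-2}{k-2}$ for some star $\s$.

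\medskip

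The third and final step is the bootstrapping from stability to the exact result. Assume $|\f|\ge\binom{n-1}{k-1}$; by stability there is a star $\s=\{A:i\in A\}$ with $|\f\setminus\s|\le C\binom{n-2}{k-2}$, hence $|\s\setminus\f|\le C\binom{n-2}{k-2}$ as well. Write $\f=(\f\cap\s)\sqcup\mathcal{B}$ with $\mathcal{B}=\f\setminus\s$ (the sets in $\f$ \emph{avoiding} $i$). The goal is to show $\mathcal{B}=\varnothing$. For each $B\in\mathcal{B}$, I would argue that $B$ together with $d$ suitably chosen sets of $\f$ through $i$ forms a cluster unless those $d$ sets are \emph{excluded}; since only $O(\binom{n-2}{k-2})$ sets through $i$ are missing from $\f$, and one needs $|A_0\cup\cdots\cup A_d|\le(\frac{d+1}{d}+\zeta)k$, a careful random selection of the $A_1,\dots,A_d$ through $i$ — choosing them to mostly share a large common part with $B$ and with each other, and to avoid $i$ only in $A_0=B$ — shows a cluster exists with positive probability, a contradiction. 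One also uses $|\f\cap\s|\ge\binom{n-1}{k-1}-C\binom{n-2}{k-2}$, which is more than enough density of ``good'' sets through $i$ to run this selection. Concluding $\mathcal{B}=\varnothing$ forces $\f\subseteq\s$, and then $|\f|\ge\binom{n-1}{k-1}=|\s|$ gives $\f=\s$.

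\medskip

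\noindent\textbf{Main obstacle.} The delicate part is not the junta approximation (which is a black box at this density) but the quantitative cluster-finding in steps two and three: one must locate $d+1$ sets with empty common intersection \emph{and} controlled union, while only a codimension-$2$-small set of candidates is forbidden. Balancing these two constraints — keeping the union below $(\frac{d+1}{d}+\zeta)k$ forces the chosen sets to overlap heavily, but heavy overlap fights against making the $(d+1)$-fold intersection empty — is the crux, and it is exactly here that the linear regime $\zeta n\le k\le(\frac{d}{d+1}-\zeta)n$ and the slack $\zeta$ in the cluster parameter are exploited. I would handle it by a weighted/random construction of the $A_i$ from a common ``core'' plus disjoint ``private'' parts, sized so that the union is $\approx\frac{d+1}{d}k$ with room to spare, and estimate the failure probability by a union bound over the $O(\binom{n-2}{k-2})$ forbidden completions.
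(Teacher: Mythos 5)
There are two genuine gaps, one in your step two and one in your step three. The classification you assert in step two is false as stated: a cluster-free $j$-junta of measure bounded below by a small absolute constant need not be contained in (or even close to) a star. For instance, for $d=2$ the junta of all $k$-sets containing at least three elements of $\left\{ 1,2,3,4\right\}$ is $3$-wise intersecting, hence free of any $\left(2,k,s\right)$-cluster, has measure bounded below by a constant of $\zeta$ throughout the range $\zeta n\le k$, and is not contained in any star. What your trace argument actually yields is only that a cluster-free junta is $\left(d+1\right)$-wise intersecting (this is Proposition \ref{prop:Junta approximation theorem }'s companion observation in the paper). To get from there to ``essentially a star'' you must feed in that $\left|\f\right|\ge\binom{n-1}{k-1}$ forces the approximating junta to have nearly extremal size, and then invoke a \emph{stability} version of Frankl's theorem for $\left(d+1\right)$-wise intersecting families (Theorem \ref{thm:Stability to Frankl's result}, due to Ellis--Keller--Lifshitz); this external ingredient is entirely absent from your outline, and without it the chain breaks. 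Relatedly, your claimed approximation error $C\binom{n-2}{k-2}$ carries no strength in this regime: for $k=\Theta\left(n\right)$ one has $\binom{n-2}{k-2}=\Theta\left(\binom{n-1}{k-1}\right)$, so ``only $O\left(\binom{n-2}{k-2}\right)$ sets through $i$ are missing'' is essentially vacuous; the correct statement is $\epsilon$-essential containment for an arbitrarily small constant $\epsilon$ in relative measure.

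The bootstrapping in your step three also fails as described, for exactly the reason you flag as the crux. Fixing $B\in\f\backslash\mathcal{S}$ and randomly completing it to a cluster through $i$ confines $A_{1},\ldots,A_{d}$ to a window of size roughly $\left(\frac{d+1}{d}+\zeta\right)k\le\left(1-c\zeta\right)n$ around $B$. The number of $k$-sets through $i$ inside any such window is exponentially small compared with $\binom{n-1}{k-1}$, so the global bound on $\left|\mathcal{S}\backslash\f\right|$ gives no control whatsoever on the density of missing sets \emph{inside the window}: all of them could concentrate precisely on the candidate completions, and no union bound over ``forbidden completions'' closes. The paper's proof of Theorem \ref{thm:stability cluster result} circumvents this by working with up-sets at level $l=\left\lceil k\left(1+\frac{\zeta}{2}\right)\right\rceil$: Corollary \ref{cor:kk up} (Kruskal--Katona) converts the near-extremal size of $\f\cap\mathcal{S}$ into near-full measure of $\left(\f_{\left\{ 1\right\} }^{\left\{ 1\right\} }\right)^{\uparrow l}$, and then a union bound over a uniformly random $\left(d,l,\left\lceil \frac{d+1}{d}l\right\rceil \right)$-cluster in $\binom{\left[n\right]\backslash\left\{ 1\right\} }{l}$ -- whose \emph{marginals} are uniform over all $l$-sets even though the cluster jointly sits in a small window -- bounds $\mu\left(\left(\f_{\left\{ 1\right\} }^{\varnothing}\right)^{\uparrow l}\right)$ and hence $\mu\left(\f\backslash\mathcal{S}\right)$, the slack $\zeta k$ absorbing the return from level $l$ to level $k$; a short self-improvement then gives the exact result at $\epsilon=0$. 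Your proposal contains no tool (Kruskal--Katona or otherwise) performing this density transfer, and the ``common core plus private parts'' construction does not restore marginal uniformity, so the central difficulty remains unresolved.
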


This means that for $k$ linear in $n$, not only any family that
is larger than the star must contain a $(d,k,2k)$-cluster as conjectured
by Mubayi, but actually it must contain a $\left(d,k,\left(\frac{d+1}{d}+\zeta\right)k\right)$-cluster,
which is almost the `strongest' cluster we can obtain, due to the
first observation above.

Let $\f,\mathcal{G}\subseteq\binom{\left[n\right]}{k}.$ We say that
$\mathcal{F}$ is \emph{$\epsilon$-essentially contained} in $\mathcal{G}$
if $\left|\mathcal{F}\backslash\mathcal{G}\right|\le\epsilon\binom{n}{k}$
(in words, if a random set in $\binom{\left[n\right]}{k}$ lies in
$\f$ and not in $\mathcal{G}$ with probability at most $\epsilon$).

We also prove a \emph{stability} result for Theorem \ref{thm:exact cluster result}
above.
\begin{thm}
\label{thm:stability cluster result} For any $d\in\mathbb{N},$ and
an arbitrarily small $\zeta>0$, there exists $C>0$, such that the
following holds. Let $\zeta n\le k\le\left(\frac{d}{d+1}-\zeta\right)n,$
and let $\epsilon\ge0$. Suppose that $\mathcal{F}\subseteq\binom{\left[n\right]}{k}$
does not contain a $\left(d,k,\left(\frac{d+1}{d}+\zeta\right)k\right)$-cluster,
and that $\left|\mathcal{F}\right|\ge\binom{n-1}{k-1}\left(1-\epsilon\right)$.
Then $\mathcal{F}$ is $C\epsilon^{1+\frac{1}{C}}$-essentially contained
in a star.
\end{thm}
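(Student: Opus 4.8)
The plan is to run the junta method in three stages --- a junta approximation, a classification of the juntas that can occur (this is where Frankl's Theorem enters), and a bootstrapping step that converts a crude approximation by a star into the quantitative bound $C\epsilon^{1+1/C}$. Throughout we may assume $\epsilon$ is smaller than a constant $\epsilon_{0}=\epsilon_{0}(d,\zeta)$, since otherwise $C\epsilon^{1+1/C}\geq1$ and there is nothing to prove; write $\alpha=k/n$, so $\zeta\leq\alpha\leq\frac{d}{d+1}-\zeta$ and $\binom{n-1}{k-1}=\alpha\binom{n}{k}$. \emph{Junta approximation.} First I would invoke the junta--approximation machinery (in the spirit of Dinur--Friedgut and of Ellis--Keller--Lifshitz, as developed in the paper): for any prescribed $\eta>0$ there is $j=j(d,\zeta,\eta)$ such that a family in $\binom{[n]}{k}$ free of $(d,k,(\tfrac{d+1}{d}+\zeta)k)$-clusters is $\eta$-essentially contained in a $j$-junta $\mathcal{J}$ which may be taken to be itself free of $(d,k,(\tfrac{d+1}{d}+\tfrac{\zeta}{2})k)$-clusters. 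I would fix $\eta=\eta(d,\zeta)$ small, to be pinned down in the next stage, and apply this to $\mathcal{F}$.

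\emph{Classification.} Let $\mathcal{J}$ live on a coordinate set $T$ with $|T|=j$. Since $k\geq\zeta n$ and $j$ is a constant, for $n$ large any $d+1$ members of $\mathcal{J}$ whose traces on $T$ have empty common intersection can be completed, via the canonical tight cluster construction inside $[n]\setminus T$, to a $(d,k,(\tfrac{d+1}{d}+\tfrac{\zeta}{2})k)$-cluster contained in $\mathcal{J}$ (the constant $j$ is absorbed because $k\geq\zeta n$); hence the trace of $\mathcal{J}$ on $T$, and therefore $\mathcal{J}$ itself, is $(d+1)$-wise intersecting. Frankl's Theorem (Theorem \ref{thm:Frankl's Theorem}), applicable because $\alpha<\frac{d}{d+1}$, then gives $|\mathcal{J}|\leq\binom{n-1}{k-1}$, while $|\mathcal{J}|\geq|\mathcal{F}|-\eta\binom{n}{k}\geq(1-\epsilon-\eta/\alpha)\binom{n-1}{k-1}$, so $\mathcal{J}$ is a $(d+1)$-wise intersecting $j$-junta of almost extremal size. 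As there are only finitely many $j$-junta patterns, and --- since $\alpha\leq\frac{d}{d+1}-\zeta$ --- every $(d+1)$-wise intersecting one not contained in a dictatorship has $p$-biased measure bounded away from $\alpha$ by a gap depending only on $j,d,\zeta$, choosing $\eta$ small forces $\mathcal{J}$, hence $\mathcal{F}$, to be $\eta$-essentially contained in a single star; relabelling, we may take it to be $\mathcal{S}=\{A\in\binom{[n]}{k}:1\in A\}$.

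\emph{Bootstrapping.} Set $\mathcal{B}=\mathcal{F}\setminus\mathcal{S}$ and $\mathcal{D}=\mathcal{S}\setminus\mathcal{F}$; it remains to show $|\mathcal{B}|\leq C\epsilon^{1+1/C}\binom{n}{k}$. Two ingredients combine. The elementary one: since $|\mathcal{F}\cap\mathcal{S}|\leq\binom{n-1}{k-1}$ and $|\mathcal{F}|\geq(1-\epsilon)\binom{n-1}{k-1}$, we get $|\mathcal{D}|\leq|\mathcal{B}|+\epsilon\binom{n-1}{k-1}$. The substantial one is a self-improving structural estimate on $\mathcal{B}$: as $\mathcal{B}\subseteq\mathcal{F}$ is cluster-free, no $d$ sets of $\mathcal{B}$ can be completed to a $(d,k,(\tfrac{d+1}{d}+\zeta)k)$-cluster using sets of $\mathcal{S}\cap\mathcal{F}$, so every such cross-completion must be destroyed by a set of $\mathcal{D}$. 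Each $A\in\mathcal{B}$ admits many cross-completions (split $A$ into $d$ near-equal blocks $A^{(1)},\dots,A^{(d)}$, fix $Z$ with $|Z|\approx\tfrac{1}{d}k$ disjoint from $A\cup\{1\}$, and set $B_{i}=\{1\}\cup(A\setminus A^{(i)})\cup Z^{(i)}$ with $Z^{(i)}\subseteq Z$), which, given that $\mathcal{D}$ is forced to be small by the elementary bound, prevents $\mathcal{B}$ from being ``spread''. Feeding this back into the junta approximation applied to $\mathcal{B}$ yields a dichotomy --- either $\mathcal{B}$ is already negligible, or it is concentrated inside almost all of a sub-junta, which a fresh cross-cluster obstruction either excludes outright or identifies with a competing star (impossible, since $\mathcal{F}$ is already $\eta$-close to $\mathcal{S}$ and distinct stars are a constant distance apart). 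Iterating this dichotomy produces a recursion of the shape $\mu(\mathcal{B})\leq C\bigl(\mu(\mathcal{B})+\epsilon\bigr)^{1+1/C}$, whose fixed point is $\mu(\mathcal{B})=O(\epsilon^{1+1/C})$, which is exactly the asserted bound.

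The main obstacle is the bootstrapping, and within it the extraction of the \emph{super-linear} saving. The naive double count --- each $A\in\mathcal{B}$ forces a fixed amount of deficiency in $\mathcal{S}$, and these ``spheres'' overlap only mildly --- gives merely $|\mathcal{B}|\leq\mathrm{poly}(n)\,|\mathcal{D}|$, which is useless; one must instead use that $\mathcal{B}$ and $\mathcal{D}$ are \emph{simultaneously} small and that a small deficiency in the star is incompatible with a non-concentrated bad set, and then organise the resulting case analysis as a convergent iteration with fixed point of order $\epsilon^{1+1/C}$. The hypothesis $k\leq(\tfrac{d}{d+1}-\zeta)n$ is used repeatedly --- in Frankl's Theorem, in the spectral gap separating dictatorial junta patterns from the rest, and to guarantee genuine room for clusters so that the cross-completions above exist --- while $k\geq\zeta n$ is what makes the junta method applicable and lets the constant $j$ be absorbed. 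By contrast, the first two stages are essentially routine once the junta--approximation theorem (and the $p$-biased form of Frankl's Theorem obtained from Theorem \ref{thm:Frankl's Theorem} by a standard limiting argument) are in hand.
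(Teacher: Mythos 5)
Your first two stages are fine and amount to the paper's rough stability statement (Proposition \ref{prop:Rough stability result}), reached by a compactness argument over the finitely many $(d+1)$-wise intersecting $j$-junta patterns instead of the quantitative Frankl stability theorem (Theorem \ref{thm:Stability to Frankl's result}); for the fixed constant-size error $\eta(d,\zeta)$ that this stage needs, that substitution is legitimate. The genuine gap is in the bootstrapping, which is exactly where the stated bound $C\epsilon^{1+1/C}$ has to come from. You correctly identify that a naive double count between $\mathcal{B}=\mathcal{F}\setminus\mathcal{S}$ and $\mathcal{D}=\mathcal{S}\setminus\mathcal{F}$ only gives $|\mathcal{B}|\le\mathrm{poly}(n)\,|\mathcal{D}|$, but the proposed substitute --- ``feeding this back into the junta approximation applied to $\mathcal{B}$'' and iterating a dichotomy --- cannot produce the recursion $\mu(\mathcal{B})\le C(\mu(\mathcal{B})+\epsilon)^{1+1/C}$ that you then assert. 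The junta approximation machinery has an error parameter that is a \emph{constant} depending on the allowed junta size $j$; it cannot be driven below the scale of an arbitrary $\epsilon$, so no iteration of it can detect, let alone certify, a quantity of order $\epsilon^{1+1/C}$. The recursion is therefore never derived, and the superlinear exponent, which is the whole content of the theorem beyond Proposition \ref{prop:Rough stability result}, is left without a mechanism.

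The paper's mechanism is different and is the step you are missing: it works at a higher uniformity $l=\lceil(1+\zeta/2)k\rceil$ and uses the Kruskal--Katona theorem. Writing $|\mathcal{F}\cap\mathcal{S}|=\binom{n-1}{k-1}(1-\epsilon'')$, Corollary \ref{cor:kk up}(3) (whose proof is where the exponent $1+\frac{1}{C}$ is actually generated, using $l-k\ge\tfrac{\zeta}{2}n$) shows that the upper shadow $(\mathcal{F}\cap\mathcal{S})^{\uparrow l}$ fills all but a $C'\epsilon''^{1+1/C'}$-fraction of the star at level $l$. Since $\mathcal{F}$ is cluster-free, $d$ copies of $(\mathcal{F}_{\{1\}}^{\{1\}})^{\uparrow l}$ together with $(\mathcal{F}_{\{1\}}^{\varnothing})^{\uparrow l}$ cannot mutually contain a $(d,l,\lceil\tfrac{d+1}{d}l\rceil)$-cluster, so a union bound over a uniformly random such cluster in $[n]\setminus\{1\}$ gives $\mu((\mathcal{F}_{\{1\}}^{\varnothing})^{\uparrow l})\le d\,C'\epsilon''^{1+1/C'}$, and the trivial monotonicity (Lemma \ref{lem:trivial kk}) pulls this back down to $\mu(\mathcal{F}\setminus\mathcal{S})$ itself. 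A final self-consistency computation comparing $|\mathcal{F}|$ with $|\mathcal{F}\cap\mathcal{S}|+|\mathcal{F}\setminus\mathcal{S}|$ pins $\epsilon''\le2\epsilon$, which is the role your fixed-point remark would play --- but only after the superlinear inequality is in hand. So your cross-completion idea is the right obstruction to exploit, but it must be run at level $l$ through the upper shadows with Kruskal--Katona supplying the gain, not through repeated junta approximation at level $k$.
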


\subsection{Set families without a simplex-cluster and the Erd\H{o}s-Chvátal
conjecture}

We use Theorem \ref{thm:exact cluster result} to partially resolve
a conjecture of Keevash and Mubayi \cite{keevash2010set} on set families
without a simplex-cluster, and to obtain a new proof for the Erd\H{o}s-Chvátal
simplex conjecture for $k$ linear in $n$.

A \emph{$d$-simplex is a family of $d+1$ sets, such that the intersection
of all of them is empty and the intersection of any $d$ of them is
nonempty.} A \emph{$d$-simplex-cluster} is a $\left(d,k,2k\right)$-cluster
which is also a $d$-simplex. The Erd\H{o}s-Chvátal simplex conjecture
\cite{chvatal1974extremal} states the following. 
\begin{conjecture}[Erd\H{o}s and Chvátal, 1974]
\label{conj:Chvatal}Let $d<k\le\frac{d}{d+1}n.$ Then any family
$\mathcal{F}\subseteq\binom{\left[n\right]}{k}$ that does not contain
a $d$-simplex satisfies $\left|\mathcal{F}\right|\le\binom{n-1}{k-1}$.
Moreover, equality holds if and only if $\mathcal{F}$ is a star. 
\end{conjecture}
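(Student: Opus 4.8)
The plan is to establish the conjecture in the range in which the junta method operates: for any $\zeta'>0$ there should be $n_{0}=n_{0}(\zeta',d)$ such that the conjecture holds for $n\ge n_{0}$ and $\zeta'n\le k\le\left(\frac{d}{d+1}-\zeta'\right)n$ (``$k$ linear in $n$''), and I would deduce this directly from Theorem~\ref{thm:exact cluster result}. (For $d=1$ the conjecture is the Erd\H{o}s--Ko--Rado theorem, so assume $d\ge2$.) The bridge is a rigidity phenomenon: a $\left(d,k,s\right)$-cluster whose union has size only slightly above the minimum possible value $\frac{d+1}{d}k$ is automatically a $d$-simplex. Indeed, suppose $\left\{ A_{0},\dots,A_{d}\right\} \subseteq\binom{\left[n\right]}{k}$ satisfies $A_{0}\cap\cdots\cap A_{d}=\varnothing$ and $\left|A_{0}\cup\cdots\cup A_{d}\right|\le\left(\frac{d+1}{d}+\zeta\right)k$. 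Fix $i$; the $d$ sets $\left\{ A_{j}:j\ne i\right\} $ have union of size at most $\left(\frac{d+1}{d}+\zeta\right)k$, which is strictly less than $\frac{d}{d-1}k$ once $\zeta<\frac{1}{d\left(d-1\right)}=\frac{d}{d-1}-\frac{d+1}{d}$. But by the first of the basic observations listed after Problem~\ref{Problem: Cluster} (applied with $d-1$ in place of $d$), a $\left(d-1,k,s\right)$-cluster requires $s\ge\frac{d}{d-1}k$, so $\bigcap_{j\ne i}A_{j}$ cannot be empty. Thus every $d$ of the sets $A_{0},\dots,A_{d}$ have a common element while all $d+1$ of them do not, i.e.\ $\left\{ A_{0},\dots,A_{d}\right\} $ is a $d$-simplex. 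Hence a family containing no $d$-simplex contains no $\left(d,k,\left(\frac{d+1}{d}+\zeta\right)k\right)$-cluster whatsoever.

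Carrying this out, I would first fix $d\ge2$ and $\zeta'>0$, set $\zeta:=\min\left\{ \zeta',\frac{1}{2d\left(d-1\right)}\right\} $, and take $n_{0}:=n_{0}(d,\zeta)$ from Theorem~\ref{thm:exact cluster result}. Second, I would prove the rigidity lemma just described. Third, given $n>n_{0}$, some $\zeta'n\le k\le\left(\frac{d}{d+1}-\zeta'\right)n$, and $\mathcal{F}\subseteq\binom{\left[n\right]}{k}$ containing no $d$-simplex, the lemma shows $\mathcal{F}$ contains no $\left(d,k,\left(\frac{d+1}{d}+\zeta\right)k\right)$-cluster, so Theorem~\ref{thm:exact cluster result} gives $\left|\mathcal{F}\right|\le\binom{n-1}{k-1}$, with equality only if $\mathcal{F}$ is a star; and a star contains no $d$-simplex, since its members share a common element and therefore every $d+1$ of them have nonempty intersection. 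This proves the conjecture for $\zeta'n\le k\le\left(\frac{d}{d+1}-\zeta'\right)n$. Since the choice of $\zeta$ also guarantees $\left(\frac{d+1}{d}+\zeta\right)k\le2k$, the same rigidity lemma shows that such a cluster is in fact a $d$-simplex-cluster, so the identical argument applied to a family with no $d$-simplex-cluster settles the Keevash--Mubayi conjecture in the same range; and substituting Theorem~\ref{thm:stability cluster result} for Theorem~\ref{thm:exact cluster result} yields a stability version of the present statement.

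Because Theorem~\ref{thm:exact cluster result} is invoked as a black box, the deduction itself presents no real obstacle: the rigidity lemma is a two-line argument resting only on the trivial lower bound $\frac{d}{d-1}k$ for the union of a $\left(d-1,k,\cdot\right)$-cluster, and the one point to watch is that $\zeta$ be chosen small enough in terms of $d$ (and no larger than the given $\zeta'$) so that $\left(\frac{d+1}{d}+\zeta\right)k<\frac{d}{d-1}k$ and the hypotheses of Theorem~\ref{thm:exact cluster result} are met. The genuine difficulty is upstream, in Theorem~\ref{thm:exact cluster result} --- the junta-method argument that a $\left(d,k,\left(\frac{d+1}{d}+\zeta\right)k\right)$-cluster-free family is $\epsilon$-approximated by a junta depending on boundedly many coordinates, that the only admissible such juntas are essentially stars, and that one can then bootstrap from ``essentially a star'' to the exact bound $\binom{n-1}{k-1}$ together with the uniqueness of the extremal family. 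That theorem is also the source of the restriction to $k$ linear in $n$: the junta method needs the density $k/n$ bounded away from $0$, and for $k=O(\log n)$ the corresponding cluster statement fails by the Frankl--F\"uredi construction of Example~\ref{Example Frankl-Furedi}, even though the Erd\H{o}s--Chv\'atal conjecture is expected to hold in that range as well.
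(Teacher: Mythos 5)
Your proposal is correct and takes essentially the same route as the paper: the paper likewise observes that any $\left(d,k,s\right)$-cluster with $s<\frac{d}{d-1}k$ is automatically a $d$-simplex (proved by lower-bounding the $d$-wise intersections, an argument equivalent to your union-count via the observation that $d$ $k$-sets with empty intersection have union at least $\frac{d}{d-1}k$) and then invokes Theorem \ref{thm:exact cluster result} to deduce the conjecture, and the stronger Conjecture \ref{conj:Keevash-Mubayi}, for $k\ge\zeta n$. The only minor difference is that your version stops at $k\le\left(\frac{d}{d+1}-\zeta'\right)n$, whereas the paper also covers the remaining sliver up to $\frac{d}{d+1}n$ exactly as in its proof of Theorem \ref{thm:exact cluster result}, via Frankl's Theorem \ref{thm:Frankl's Theorem}.
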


In 1976, Frankl \cite{frankl1976sperner} showed that the conjecture
holds if $k\ge\frac{d-1}{d}n$. In 1987, Frankl and Füredi \cite{frankl1987exact}
proved the conjecture in the case where $d$ and $k$ are fixed and
$n\ge n_{0}\left(k,d\right).$ In 2005, Mubayi and Verstraëte \cite{mubayi2005proof}
settled the case $d=2$ for any values of $k$ and $n$. In 2010,
Keevash and Mubayi \cite{keevash2010set} settled the case $\zeta n\le k\le\frac{n}{2}-T,$
for any $\zeta>0$, provided that $T=T\left(\zeta,d\right)$ is sufficiently
large. Finally, Keller and the author \cite{keller2017junta} gave
a 70 pages long proof that Conjecture \ref{conj:Chvatal} holds for
any $k$ in the range (i.e., $d<k\le\frac{d}{d+1}n$), provided that
$n\ge n_{0}\left(d\right)$.

Keevash and Mubayi \cite{keevash2010set} gave the following conjecture
that strengthens both Chvátal's conjecture and Conjecture \ref{Conj Mubayi }.
\begin{conjecture}
\label{conj:Keevash-Mubayi}Let $d<k\le\frac{d}{d+1}n.$ Then any
family $\mathcal{F}\subseteq\binom{\left[n\right]}{k}$ that does
not contain a $d$-simplex-cluster satisfies $\left|\mathcal{F}\right|\le\binom{n-1}{k-1}$.
Moreover, equality holds only if $\mathcal{F}$ is a star. 
\end{conjecture}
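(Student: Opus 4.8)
The plan is to prove Conjecture~\ref{conj:Keevash-Mubayi} in the range $\zeta n\le k\le\frac{d}{d+1}n$, $n\ge n_{0}(\zeta,d)$; the values $k\le\zeta n$ lie genuinely beyond the reach of the method, so this is the best one can hope to extract here. One may assume $d\ge 2$ (a $1$-simplex-cluster is just a pair of disjoint $k$-sets, so $d=1$ is the Erd\H{o}s--Ko--Rado theorem), and also $k\le\frac{d-1}{d}n$: indeed for $k\ge\frac{d-1}{d}n$ one has $2k\ge n$, so the union of \emph{every} $d$-simplex has size $\le n\le 2k$ and is therefore a $d$-simplex-cluster, whence a $d$-simplex-cluster-free family is in this regime also $d$-simplex-free and Frankl's $1976$ theorem on the Erd\H{o}s--Chv\'atal conjecture applies (with uniqueness). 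So assume $\zeta n\le k\le\frac{d-1}{d}n$; then $k=\Theta(n)$ and $n-k=\Theta(n)$. The plan is to run the junta method exactly along the lines of the proof of Theorem~\ref{thm:exact cluster result}, with the $d$-simplex-cluster replacing the $(d,k,s)$-cluster as the forbidden configuration, using the linearity of $k$ to supply on demand the extra structure --- the simplex skeleton together with the bound $|A_{0}\cup\cdots\cup A_{d}|\le 2k$ --- that distinguishes a simplex-cluster from an arbitrary cluster.

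Concretely, let $\mathcal F\subseteq\binom{[n]}{k}$ be $d$-simplex-cluster-free with $|\mathcal F|\ge\binom{n-1}{k-1}$. First I would invoke the junta-approximation machinery underlying Theorem~\ref{thm:exact cluster result} (Dinur--Friedgut; Ellis--Keller--Lifshitz), which applies to any fixed finite forbidden configuration: it supplies $j=j(d,\zeta)$ and an arbitrarily small $\epsilon_{0}=\epsilon_{0}(d,\zeta)>0$ such that $\mathcal F$ is $\epsilon_{0}$-essentially contained in a $j$-junta $\mathcal J$, depending on a set $J$ of $\le j$ coordinates, which is itself $d$-simplex-cluster-free and for which every trace $B\in\mathcal B:=\{A\cap J:A\in\mathcal J\}$ is realised by many members of $\mathcal F$; moreover $k\ge\zeta n$ forces $\mathcal J$ to have measure $\ge\zeta-\epsilon_{0}>0$. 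The crux is then to show $\mathcal J$ is contained in a star $\mathcal S_{i_{0}}$. This is more delicate than in the plain-cluster case, because there \emph{are} non-star $d$-simplex-cluster-free juntas --- for instance the $(d+1)$-wise intersecting junta $\{A:|A\cap J'|\ge|J'|-1\}$ with $|J'|=d+2$. One would show: (i) among $j$-juntas that are $(d+1)$-wise intersecting, the star is strictly largest in the range $k\le\frac{d}{d+1}n$ (a Frankl-type/shifting computation; this is precisely where $k\le\frac{d}{d+1}n$ enters, since for larger $k$ the junta above overtakes the star); and (ii) a $d$-simplex-cluster-free junta that is not $(d+1)$-wise intersecting and not a sub-star can be excluded directly, by choosing $d+1$ of its traces whose lift to $\mathcal F$ can be arranged to have empty total intersection and completing them to sets $A_{0},\dots,A_{d}\in\mathcal F$ --- using the density property and the room $n-k=\Theta(n)$ outside $J$ --- in the canonical simplex-cluster pattern in which every element outside $J$ lies in exactly $d$ of the $d+1$ sets, so that $|A_{0}\cup\cdots\cup A_{d}|=\frac{d+1}{d}k+O(1)\le 2k$ (using $d\ge 2$), all $|A_{i}|=k$, $\bigcap_{i}A_{i}=\varnothing$ and $\bigcap_{i\ne\ell}A_{i}\ne\varnothing$ for each $\ell$, which is a $d$-simplex-cluster in $\mathcal F$, a contradiction. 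It follows that if $\mathcal J$ is not a sub-star then $|\mathcal J|\le\binom{n-1}{k-1}-\delta(d,\zeta)\binom{n}{k}$, and choosing $\epsilon_{0}<\delta$ contradicts $|\mathcal F|\ge\binom{n-1}{k-1}$. Hence $\mathcal F$ is $\epsilon_{0}$-essentially contained in a star $\mathcal S_{i_{0}}$.

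It remains to bootstrap to the exact statement. Put $\mathcal F_{1}=\mathcal F\setminus\mathcal S_{i_{0}}$, so $|\mathcal F_{1}|\le\epsilon_{0}\binom{n}{k}$ while $|\mathcal F\cap\mathcal S_{i_{0}}|\ge\binom{n-1}{k-1}-\epsilon_{0}\binom{n}{k}$ is a $(1-o(1))$-fraction of $\mathcal S_{i_{0}}$ (using $k\ge\zeta n$). If $\mathcal F_{1}\ne\varnothing$, fix $A\in\mathcal F_{1}$; the skeleton condition is then almost free, since $i_{0}\in\bigcap_{i\ge1}A_{i}$ for any $A_{1},\dots,A_{d}\in\mathcal S_{i_{0}}$, and a supersaturation count shows that a positive proportion of the ordered $d$-tuples of distinct $A_{1},\dots,A_{d}\in\mathcal S_{i_{0}}$ complete $A$ to a $d$-simplex-cluster (the remaining skeleton conditions $A\cap\bigcap_{i\ne\ell,\,i\ge1}A_{i}\ne\varnothing$ and the bound $\le 2k$ being cheap to secure when $k=\Theta(n)$); hence some such tuple lies in $\mathcal F\cap\mathcal S_{i_{0}}$, yielding a $d$-simplex-cluster in $\mathcal F$, a contradiction. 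So $\mathcal F\subseteq\mathcal S_{i_{0}}$, and $|\mathcal F|\ge\binom{n-1}{k-1}=|\mathcal S_{i_{0}}|$ forces $\mathcal F=\mathcal S_{i_{0}}$. Running this last step quantitatively, with the stability bound of Theorem~\ref{thm:stability cluster result} (obtained along the same lines) in place of the crude density estimate, gives the corresponding stability result. The main obstacle throughout is the junta classification of the second paragraph: unlike for plain clusters, genuine non-star $d$-simplex-cluster-free juntas exist, and separating them from the star needs both the Frankl-type inequality (requiring $k\le\frac{d}{d+1}n$) and the lifting construction (requiring $k=\Theta(n)$ to fit the simplex skeleton under the union bound $2k$) --- and it is this second requirement that prevents the method from reaching the full range $d<k\le\frac{d}{d+1}n$ of the conjecture.
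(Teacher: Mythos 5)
Your plan misses the one\-/line reduction by which the paper actually proves this statement (in the range it is proved, i.e.\ Theorem \ref{cor:Simplex-Cluster}): if $\left\{ A_{0},\ldots,A_{d}\right\}$ is a $\left(d,k,s\right)$-cluster, then any $d$ of its members intersect in at least $k-\left(d-1\right)\left(s-k\right)$ elements, so for $s=\left(\frac{d+1}{d}+\zeta\right)k<\frac{d}{d-1}k$ (with $\zeta$ small and $d\ge2$; $d=1$ is just EKR) every such cluster is automatically a $d$-simplex, and since $s\le2k$ it is a $d$-simplex-cluster. Hence a $d$-simplex-cluster-free family contains no $\left(d,k,\left(\frac{d+1}{d}+\zeta\right)k\right)$-cluster, and Theorem \ref{thm:exact cluster result} applies verbatim; no new junta analysis is needed. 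Your proposal instead reruns the entire junta method with the simplex-cluster as the forbidden configuration, and the extra ``delicacy'' you introduce is spurious: the junta $\left\{ A:\left|A\cap J'\right|\ge\left|J'\right|-1\right\}$, $\left|J'\right|=d+2$, is $\left(d+1\right)$-wise intersecting and therefore free of \emph{plain} clusters as well, so the simplex case is no harder than the cluster case at the junta level. In both cases the machinery produces a $\left(d+1\right)$-wise intersecting junta (for tight clusters the simplex skeleton comes for free, by the computation above), and one concludes with Frankl's Theorem \ref{thm:Frankl's Theorem} and its stability version \ref{thm:Stability to Frankl's result} --- not with a classification of simplex-cluster-free juntas, which your steps (i)--(ii) only sketch.

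The genuine gap is in your bootstrapping step. The claim that ``a positive proportion of the ordered $d$-tuples of distinct $A_{1},\ldots,A_{d}\in\mathcal{S}_{i_{0}}$ complete $A$ to a $d$-simplex-cluster'' is false: the constraint $\left|A\cup A_{1}\cup\cdots\cup A_{d}\right|\le2k$ (and likewise the empty total intersection) is satisfied by only an exponentially small fraction of such tuples when $k=\Theta\left(n\right)$ and $2k<n$, so knowing that $\mathcal{F}\cap\mathcal{S}_{i_{0}}$ has density $1-O\left(\epsilon_{0}\right)$ in the star for a \emph{constant} $\epsilon_{0}$ yields nothing by a union bound over uniformly random tuples. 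One must instead condition on a tight configuration: pass to the up-closures $\left(\mathcal{F}_{\left\{ 1\right\} }^{\left\{ 1\right\} }\right)^{\uparrow l}$ and $\left(\mathcal{F}_{\left\{ 1\right\} }^{\varnothing}\right)^{\uparrow l}$ at a level $l=\left(1+\Theta\left(\zeta\right)\right)k$, show via the Kruskal--Katona corollary (Corollary \ref{cor:kk up}) that the former has measure close to $1$, and then union-bound over a uniformly random $\left(d,l,\left\lceil \frac{d+1}{d}l\right\rceil \right)$-cluster --- this is exactly the proof of Theorem \ref{thm:stability cluster result}, which your sketch replaces by the assertion that the $\le2k$ bound and the skeleton conditions are ``cheap to secure.'' As written, that step does not close; and once it is repaired by the Kruskal--Katona argument, your whole construction collapses back to the paper's route: prove the cluster theorem and deduce the simplex-cluster statement as the immediate corollary described above.
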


Note that if $\left\{ A_{0},\ldots,A_{d}\right\} $ is a $\left(d,k,s\right)$-cluster,
then the intersection of any $d$ of these sets is of size $\ge k-\left(d-1\right)\left(s-k\right).$
Indeed, we have
\begin{align*}
\left|A_{0}\cap\cdots\cap A_{d-1}\right| & \ge\left|A_{0}\right|-\sum_{i=1}^{d-1}\left|A_{0}\backslash A_{i}\right|\ge k-\sum_{i=1}^{d-1}\left|A_{0}\cup\cdots\cup A_{d}\backslash A_{i}\right|\\
 & \ge k-\left(d-1\right)\left(s-k\right).
\end{align*}
 Hence, if $s<\frac{d}{d-1}k$ then the intersection of each $d$
sets in a $\left(d,k,s\right)$-cluster is of size larger than $k-\left(d-1\right)\left(\frac{d}{d-1}k-k\right)=0.$
Thus, for such $s$, any $\left(d,k,s\right)$-cluster is a $d$-simplex.
Therefore, as an immediate corollary of Theorem \ref{thm:exact cluster result}
we obtain that Conjecture \ref{conj:Keevash-Mubayi} holds for all
$k\ge\zeta n$, provided that $n\ge n_{0}\left(\zeta,d\right)$. 
\begin{thm}
\label{cor:Simplex-Cluster} For each $d\in\mathbb{N},\zeta>0,$ there
exists $n_{0}=n_{0}\left(d,\zeta\right)$, such that the following
holds. Let $n\ge n_{0}\left(\zeta,d\right),$ and let $\zeta n<k\le\frac{d}{d+1}n.$
Then any family $\mathcal{F}\subseteq\binom{\left[n\right]}{k}$ that
does not contain a $d$-simplex-cluster, satisfies $\left|\mathcal{F}\right|\le\binom{n-1}{k-1}$.
Moreover, equality holds if and only if $\mathcal{F}$ is a star. 
\end{thm}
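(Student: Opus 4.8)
The plan is to derive Theorem~\ref{cor:Simplex-Cluster} from Theorem~\ref{thm:exact cluster result} and Frankl's Theorem~\ref{thm:Frankl's Theorem}, using the elementary fact --- recorded in the estimate just after Conjecture~\ref{conj:Keevash-Mubayi} --- that a $\left(d,k,s\right)$-cluster with $s$ only slightly above $\frac{d+1}{d}k$ is automatically a $d$-simplex-cluster. If $d=1$, a $1$-simplex-cluster is just a pair of disjoint $k$-sets, so $\f$ is intersecting and the assertion is the Erd\H{o}s--Ko--Rado theorem; so assume $d\ge2$. Fix $\zeta>0$ and put $\zeta':=\min\left\{\zeta,\frac{1}{d\left(d+1\right)}\right\}$. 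Since $\zeta'\le\frac{1}{d(d+1)}<\frac{d}{d-1}-\frac{d+1}{d}$ and $\zeta'\le\frac{d-1}{d}=2-\frac{d+1}{d}$, the said estimate shows that every $\left(d,k,\left(\frac{d+1}{d}+\zeta'\right)k\right)$-cluster has all its $d$-fold intersections nonempty --- hence is a $d$-simplex --- while its union has size at most $\left(\frac{d+1}{d}+\zeta'\right)k\le2k$; so it is a $d$-simplex-cluster. Consequently, any $\f\subseteq\binom{[n]}{k}$ containing no $d$-simplex-cluster contains no $\left(d,k,\left(\frac{d+1}{d}+\zeta'\right)k\right)$-cluster. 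Let $n_{0}\left(\zeta,d\right)$ be the constant $n_{0}\left(\zeta',d\right)$ supplied by Theorem~\ref{thm:exact cluster result}.

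Now split the range of $k$. If $\zeta n<k\le\left(\frac{d}{d+1}-\zeta'\right)n$, apply Theorem~\ref{thm:exact cluster result} with parameter $\zeta'$ (note $k\ge\zeta n\ge\zeta'n$) to get $\left|\f\right|\le\binom{n-1}{k-1}$ with equality iff $\f$ is a star. If instead $\left(\frac{d}{d+1}-\zeta'\right)n<k\le\frac{d}{d+1}n$, then by the choice of $\zeta'$ we have $k>\frac{d-1}{d}n$, hence $k>\frac n2$. Any $B_{1},\ldots,B_{d}\in\binom{[n]}{k}$ with $B_{1}\cap\ldots\cap B_{d}=\emptyset$ would give $[n]=\bigcup_{i=1}^{d}\left([n]\setminus B_{i}\right)$ and so $n\le d(n-k)$, i.e. $k\le\frac{d-1}{d}n$, which is impossible; thus every $d$ members of $\binom{[n]}{k}$ have a common element. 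Therefore, if $\f$ were not $\left(d+1\right)$-wise intersecting, any $A_{0},\ldots,A_{d}\in\f$ with $A_{0}\cap\ldots\cap A_{d}=\emptyset$ would have all their $d$-fold intersections nonempty and union of size $\le n<2k$, i.e. would form a $d$-simplex-cluster, contradicting the hypothesis on $\f$. Hence $\f$ is $\left(d+1\right)$-wise intersecting, and since $k\le\frac{d}{d+1}n$, Frankl's Theorem~\ref{thm:Frankl's Theorem} (with $s=d+1$) yields $\left|\f\right|\le\binom{n-1}{k-1}$, with equality iff $\f$ is a star whenever $k<\frac{d}{d+1}n$. Combining the two ranges proves the theorem.

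Candidly, there is no genuine obstacle here: all the difficulty lives in Theorem~\ref{thm:exact cluster result}, and the corollary is a short deduction. The one thing that is easy to overlook is that Theorem~\ref{thm:exact cluster result} is stated only for $k\le\left(\frac{d}{d+1}-\zeta\right)n$, so the thin top strip up to $k=\frac{d}{d+1}n$ is not literally covered by it; the clean remedy is the remark above that for $k>\frac{d-1}{d}n$ a family is $d$-simplex-cluster-free \emph{exactly} when it is $\left(d+1\right)$-wise intersecting, which brings that strip within the scope of Frankl's theorem. The only actual computation is choosing $\zeta'$ so that simultaneously the reduction to a $d$-simplex is valid (so that $\frac{d+1}{d}+\zeta'$ lies strictly below $\frac{d}{d-1}$ and is at most $2$) and the leftover strip falls in Frankl's range (so that $\frac{d}{d+1}-\zeta'\ge\frac{d-1}{d}$); both hold for the value $\zeta'=\min\left\{\zeta,\frac{1}{d(d+1)}\right\}$ used above.
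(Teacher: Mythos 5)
Your proposal is correct and follows essentially the same route as the paper: the observation that any $\left(d,k,s\right)$-cluster with $s<\frac{d}{d-1}k$ and $s\le2k$ is automatically a $d$-simplex-cluster reduces the statement to Theorem \ref{thm:exact cluster result}, with Frankl's Theorem \ref{thm:Frankl's Theorem} covering the top strip of $k$ --- exactly the mechanism the paper uses (there the strip is absorbed into the proof of Theorem \ref{thm:exact cluster result} itself). You are in fact more explicit than the paper about that strip and about $d=1$; the only caveat, inherited from the paper rather than introduced by you, is that at the exact endpoint $k=\frac{d}{d+1}n$ Frankl's theorem does not supply the uniqueness of the star.
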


Since Conjecture \ref{conj:Keevash-Mubayi} strengthens the Erd\H{o}s-Chvátal
conjecture \ref{conj:Chvatal}, this paper gives a relatively short
proof of the fact that the Erd\H{o}s-Chvátal conjecture holds for
all $k\ge\zeta n$, provided that $n\ge n_{0}\left(\zeta,d\right).$

\subsection{The proof method }

The main tool we use in our proof is the `junta method', initiated
by Dinur and Friedgut \cite{dinur2009intersecting} and further developed
by Keller and the author \cite{keller2017junta}, and by Ellis, Keller,
and the author \cite{ellis2016stabilityfor}. One of our goals in
writing this paper is to make this recent technique more accessible,
by providing a shorter paper that follows the framework of the junta
method. 

\section{Juntas and proof sketch}

Let $j<k<n$. A family $\j\subseteq\binom{\left[n\right]}{k}$ is
said to be a \emph{$j$-junta} if there exists a set $J$ of size
$j$, and a family $\g\subseteq\p\left(J\right)$, such that a set
$A$ is in $\j$ if and only if $A\cap J$ is in $\g.$ Informally,
a family is a \emph{junta} if it is  a $j$-junta for a constant $j$
independent of $k$ and $n$.

The notion `junta' originates in the field known as `analysis of Boolean
functions', where it plays a central role (see e.g., Bourgain \cite{bourgain2002distribution},
Dinur et al. \cite{dinur2006fourier}, Friedgut \cite{friedgut1998boolean},
and Kindler-Safra \cite{kindler2002noise}). 

They were introduced to extremal combinatorics by Dinur and Friedgut
\cite{dinur2009intersecting}. They showed that any intersecting family
is essentially contained in an intersecting junta. 
\begin{thm}[Dinur and Friedgut 2009]
\label{thm: Dinur and Friedgut} For any $r$ there exists a $C=C\left(r\right),j=j\left(r\right)$,
such that any intersecting family $\f\subseteq\binom{\left[n\right]}{k}$
is $C\left(\frac{k}{n}\right)^{r}$-essentially contained in an intersecting
$j$-junta.
\end{thm}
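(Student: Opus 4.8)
The plan is to prove the Dinur--Friedgut theorem by a density-increment / bootstrapping argument, which is the standard shape of junta arguments in this area. First I would recall the basic ``spectral'' fact: if $\f \subseteq \binom{[n]}{k}$ with $k/n$ bounded away from $1/2$ has measure (under the uniform measure on $\binom{[n]}{k}$) larger than some threshold, then one can already extract structural information. More precisely, the key analytic input is a Kindler--Safra / Friedgut-type statement: a Boolean function on the slice whose low-degree part carries almost all of its $L^2$ mass is close to a junta. So the first step is to reduce the combinatorial intersecting condition to an $L^2$ statement about the Fourier expansion of $\mathbf{1}_\f$ on the Johnson scheme.

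The core of the argument is an iterative ``capturing'' procedure. I would argue as follows: given an intersecting family $\f$, look at the set of coordinates $i$ such that the ``influence'' of $i$ on $\f$ (the measure of the boundary in direction $i$, i.e. pairs of sets differing in coordinate $i$ with exactly one in $\f$) is at least some small $\delta$. Call this set $J$. A pigeonhole/Bonami-type inequality bounds $|J| \le j(\delta)$, a constant independent of $n,k$. Then one shows that $\f$ is $C(k/n)^r$-close to the junta $\j$ generated by $J$ and the restriction $\g = \{ B \subseteq J : \Pr[A \in \f \mid A \cap J = B] \ge 1/2 \}$: coordinates outside $J$ have tiny influence, so by a hypercontractivity/level-$r$ inequality the deviation of $\f$ from depending only on $J$ is at most polynomially small in $k/n$; making $j$ large (depending on $r$) drives this error below $C(k/n)^r$. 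This step --- quantitatively controlling the approximation error by $(k/n)^r$ rather than merely $o(1)$ --- is the technical heart, and it is where the level-$r$ inequality on the slice (a consequence of hypercontractivity, e.g. the Lee--Yau / Diaconis--Shahshahani log-Sobolev inequality transported to the Johnson scheme) does the work.

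Finally I would check that the junta $\j$ can be taken to be intersecting. This is the cleanest part: if $\j$ were not intersecting, there would be $B_1, B_2 \in \g$ with $B_1 \cap B_2 = \varnothing$; since $\f$ is $C(k/n)^r$-close to $\j$ and the ``fibers'' over $B_1$ and $B_2$ each have measure $\asymp (k/n)^{|B_1|}$ and $(k/n)^{|B_2|}$ which (for $|B_1|,|B_2| \le j$) are much larger than the error term when $n$ is large, one can find $A_1 \in \f$ with $A_1 \cap J = B_1$ and $A_2 \in \f$ with $A_2 \cap J = B_2$ that are additionally disjoint outside $J$ --- using that there is a lot of room in $[n]\setminus J$ --- contradicting that $\f$ is intersecting. (One has to be a little careful: the right statement is that $\j$ can be replaced by its ``intersecting core'' without increasing the distance to $\f$, or equivalently that after a bounded number of cleanup steps the junta becomes intersecting.)

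The step I expect to be the main obstacle is obtaining the \emph{polynomial} error bound $C(k/n)^r$ with the constants $C,j$ depending only on $r$ and not on $n$. Getting a qualitative $o(1)$ bound is comparatively soft, but the quantitative version requires the sharp form of hypercontractivity on the slice $\binom{[n]}{k}$ (which degrades as $k/n \to 1/2$, hence the hypotheses in later applications restricting $k$ away from $\tfrac{d}{d+1}n$ and the analogous regimes), together with a careful bookkeeping of how the junta size $j$ must grow as a function of $r$. Everything else --- the influence pigeonhole, the passage from influences to junta approximation, and the intersecting-core cleanup --- is routine once that inequality is in hand.
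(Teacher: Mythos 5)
First, a point of comparison: the paper does not prove this statement at all --- Theorem \ref{thm: Dinur and Friedgut} is quoted as background from Dinur and Friedgut's 2009 paper --- so your sketch can only be judged on its own merits, and on those merits it has a genuine gap exactly at the step you flag as the ``technical heart''. Your plan is to take $J$ to be the coordinates of influence at least $\delta$, bound $|J|$ by pigeonhole, and then get the error $C(k/n)^r$ from a level-$r$/hypercontractive inequality on the slice. Two things go wrong. (i) The pigeonhole needs the total influence to be bounded by a constant, which is not available for an arbitrary intersecting family at the single scale $k/n$; the standard fix (monotonize, use Russo's lemma and the mean value theorem to find a nearby biased scale $q$ where $d\mu_q/dq$ is small, as in the paper's own proof of Lemma \ref{lem: Follows from Friedgut-Junta Theorem}) produces a junta approximation at that other scale and only with \emph{constant} error. (ii) Even with bounded influence, Friedgut's junta theorem with target error $\epsilon=(k/n)^r$ gives a junta whose size grows with $n/k$, whereas the theorem requires $j=j(r)$ independent of $n$ and $k$. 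Obtaining an error polynomial in $k/n$ while keeping $j$ bounded is precisely the content of Dinur--Friedgut, and their proof uses a further mechanism special to intersecting families (passing to the monotone closure, working with $\mu_p$ across roughly $r$ different scales, and a capturable/uncapturable dichotomy in which each failure to be captured by boundedly many coordinates costs a factor of $p$ in measure). No level-$r$ inequality by itself delivers this, so the central claim of your second paragraph is unsubstantiated.

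The ``intersecting core'' cleanup is also broken as stated. Note first that the theorem has content only when $k=o(n)$ (for $k/n=\Theta(1)$ it is trivial, as the paper remarks), and in the range $\sqrt{n}\ll k\ll n$ the ``lots of room in $[n]\setminus J$'' heuristic fails: two uniformly random $k$-subsets of $[n]$ intersect with probability $1-e^{-\Theta(k^{2}/n)}\to 1$, so even if the fibers over $B_{1}$ and $B_{2}$ are each nearly full of members of $\mathcal{F}$, a counting/room argument does not produce a disjoint pair $A_{1},A_{2}$. One needs a genuine cross-intersecting input here, e.g.\ the product bound $|\mathcal{F}_{1}||\mathcal{F}_{2}|\le\binom{m-1}{a-1}\binom{m-1}{b-1}$ for cross-intersecting $\mathcal{F}_{1}\subseteq\binom{[m]}{a}$, $\mathcal{F}_{2}\subseteq\binom{[m]}{b}$ with $a,b\le m/2$, or the monotone-closure route of the original paper. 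Moreover, your definition of $\mathcal{G}$ via a $1/2$-threshold and the disjointness argument both implicitly use a two-sided (symmetric-difference) approximation of $\mathcal{F}$ by the junta, which you have not established and which is stronger than the one-sided containment the theorem asserts; so the cleanup needs to be reorganized even after the main quantitative step is repaired.
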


Note that Theorem \ref{thm: Dinur and Friedgut} is trivial if $\frac{k}{n}=\Theta\left(1\right)$.
In that regime, they managed to show a slightly weaker version of
the following recent result of Friedgut and Regev \cite{friedgut2017kneser}.
\begin{thm}[\cite{friedgut2017kneser}]
 \label{thm:Friedgut and Regev}For each $\zeta,\epsilon>0$ there
exists $j=j\left(\zeta,\epsilon\right)\in\mathbb{N}$, such that the
following holds. Let $\zeta n<k<\left(\frac{1}{2}-\zeta\right)n$
and let $\f\subseteq\binom{\left[n\right]}{k}$ be an intersecting
family. Then $\f$ is $\epsilon$-essentially contained in an intersecting
$j$-junta.
\end{thm}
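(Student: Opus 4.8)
The plan is to prove Theorem~\ref{thm:Friedgut and Regev} by the junta method: we build an intersecting junta containing $\f$ essentially by greedily collecting a bounded set of ``influential'' coordinates. If $\mu(\f):=|\f|/\binom{n}{k}\le\epsilon$ the empty junta already works, so assume $\mu(\f)>\epsilon$. For $J\subseteq[n]$ and $B\subseteq J$ write $\f_{J,B}:=\{A\setminus J:A\in\f,\ A\cap J=B\}\subseteq\binom{[n]\setminus J}{k-|B|}$ for the corresponding restriction and $\mu(\f_{J,B})$ for its density; fix a threshold $\delta=\delta(\epsilon)$ and put $\g_J:=\{B\subseteq J:\mu(\f_{J,B})\ge\delta\}$ and $\j_J:=\{A\in\binom{[n]}{k}:A\cap J\in\g_J\}$, a $|J|$-junta. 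Two elementary observations drive everything. First, by a Vandermonde identity $\sum_{B\subseteq J}\binom{n-|J|}{k-|B|}=\binom{n}{k}$, so $|\f\setminus\j_J|\le\sum_{B\notin\g_J}\mu(\f_{J,B})\binom{n-|J|}{k-|B|}<\delta\binom{n}{k}$; hence $\f$ is $\delta$-essentially contained in $\j_J$ for \emph{every} $J$. Second, $\j_J$ is intersecting if and only if $\g_J$ has no two disjoint members, and for disjoint $B,B'\subseteq J$ the restrictions $\f_{J,B}$ and $\f_{J,B'}$ are \emph{cross-intersecting} (if $A,A'\in\f$ have traces $B,B'$ on $J$ then $\varnothing\ne A\cap A'\subseteq[n]\setminus J$). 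So it suffices to grow $J$ until $\g_J$ becomes intersecting, and the entire task is to bound the number of steps by some $j=j(\epsilon,\zeta)$.

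The engine is a stability statement for cross-intersecting families in the ``middle'' regime. \textbf{Key Lemma.} There is $c=c(\zeta,\delta)>0$ such that if $\mathcal{A}\subseteq\binom{[m]}{a}$ and $\mathcal{B}\subseteq\binom{[m]}{b}$ are cross-intersecting with $a,b,m-a-b\ge\tfrac{\zeta}{2}m$ and $\mu(\mathcal{A}),\mu(\mathcal{B})\ge\delta$, then some coordinate $i\in[m]$ is influential for $\mathcal{A}$ or for $\mathcal{B}$, in the sense that the two restriction-densities obtained by conditioning on $i\in\cdot$ and on $i\notin\cdot$ differ by at least $c$. This is where the hypotheses $k\ge\zeta n$ and $k\le(\tfrac12-\zeta)n$ are used in full strength: when $a,b,m-a-b$ are all linear in $m$, the bipartite ``disjointness graph'' between $\binom{[m]}{a}$ and $\binom{[m]}{b}$ is a strong expander, its normalized second singular value being smaller than $1$ by $\Omega(\zeta)$ \emph{uniformly in $m$} (this is visible from the eigenvalues of the Johnson scheme). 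A cross-intersecting pair is an independent set of this bipartite graph, so a Hoffman-type bound applies; and in the stability version, if both sides were simultaneously dense and ``pseudorandom'' (every coordinate of influence $<c$), expander mixing would force the disjointness-neighbourhood of $\mathcal{A}$ to fill $\binom{[m]}{b}$ to density $>1-\delta$, contradicting $\mu(\mathcal{B})\ge\delta$. Establishing this lemma with $c$ \emph{independent of $m$} is the main obstacle; it is precisely the phenomenon that ``Kneser graphs are like Swiss cheese'', and it is also what governs the termination below.

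Finally, run the greedy process: start with $J=\varnothing$; while $\g_J$ contains a disjoint pair $B,B'$, apply the Key Lemma to $\mathcal{A}=\f_{J,B}$ and $\mathcal{B}=\f_{J,B'}$ inside $\binom{[n]\setminus J}{k-|B|}$ and $\binom{[n]\setminus J}{k-|B'|}$ — the dimensional hypotheses hold because $|B|,|B'|\le|J|$ stays bounded while $k\in[\zeta n,(\tfrac12-\zeta)n]$ — to obtain a coordinate $i\notin J$ influential for one of the two restrictions, and set $J:=J\cup\{i\}$. Each such step strictly increases the potential $\Phi(J):=\sum_{B\subseteq J}\Pr_{A\sim\binom{[n]}{k}}[A\cap J=B]\,\mu(\f_{J,B})^2\in[0,1]$, and, quantified correctly (phrasing the Key Lemma Fourier-theoretically and using the uniform spectral gap to control the weight of $\mathbf 1_{\f}$ on the first $r(\epsilon,\zeta)$ eigenlevels, exactly as in Dinur--Friedgut), the process stops after a number of steps bounded by a function $j=j(\epsilon,\zeta)$. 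When it stops, $\g_J$ is intersecting, so $\j_J$ is an intersecting $j$-junta, and by the first observation (with $\delta=\epsilon$) the family $\f$ is $\epsilon$-essentially contained in $\j_J$, as required. I expect the bookkeeping in this last step to be routine once the Key Lemma is in hand; the Key Lemma — uniform, $m$-independent expansion and stability for cross-intersecting families in the regime $\zeta m<a,b<(\tfrac12-\zeta)m$ — is the crux, and is where essentially all the work lies.
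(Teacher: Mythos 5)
You should first note that the paper does not prove this statement at all: Theorem \ref{thm:Friedgut and Regev} is quoted from Friedgut and Regev \cite{friedgut2017kneser} purely as background, and the paper's own arguments go through the regularity lemma of \cite{ellis2016stabilityfor} instead. So the only fair comparison is with the Friedgut--Regev proof itself, and there your proposal is not a proof but a reduction to an unproved ``Key Lemma'' which, as you yourself say, carries essentially the entire content of the theorem. The outer frame (greedy collection of influential coordinates; disjoint traces $B,B'\subseteq J$ give cross-intersecting restrictions $\f_{J}^{B},\f_{J}^{B'}$) is sound and is exactly the Dinur--Friedgut scheme; but the regime $\zeta n<k<(\tfrac12-\zeta)n$ is precisely where that scheme was known not to close, since the Dinur--Friedgut error term $C(r)(k/n)^{r}$ is vacuous when $k/n=\Theta(1)$ --- which is why Friedgut--Regev is cited as a separate, later achievement, proved by genuinely different means (a compactness/limit argument with exchangeable couplings, not a finite spectral argument).

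The concrete gap is the justification you offer for the Key Lemma. The normalized second singular value of the bipartite disjointness operator between $\binom{[m]}{a}$ and $\binom{[m]}{b}$ is of order $1-\Omega(\zeta)$, i.e.\ the gap is only a constant. With such a gap, expander mixing applied to a family $\mathcal{A}$ of density $\delta$ and the complement $\mathcal{B}$ of its disjointness-neighbourhood gives only $\mu(\mathcal{B})\le\lambda^{2}/\mu(\mathcal{A})\le(1-\Omega(\zeta))^{2}/\delta$, which exceeds $1$ for small $\delta$; it cannot force the neighbourhood to have density $>1-\delta$, and the pseudorandomness hypothesis ``every coordinate has influence $<c$'' is not something the mixing lemma can use at all. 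Converting small influences plus a constant spectral gap into near-full disjointness-neighbourhoods, \emph{uniformly in $m$}, is the main theorem of \cite{friedgut2017kneser}, so as written you are assuming what is to be proved. The termination step is also not routine: the gain in your potential $\Phi$ from an influential coordinate on a branch $B$ is weighted by $\Pr[\boldsymbol{A}\cap J=B]$, which decays geometrically in $|J|$, so $\Phi\le1$ alone does not bound the number of steps; you again need a uniform junta/level-decay input of Friedgut--Regev strength. In short, the reduction is fine but the crux is missing, and the heuristic offered for it (constant-gap mixing) demonstrably does not suffice.
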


These results inspired the works of Ellis, Keller, and the author
\cite{ellis2016stabilityfor,keller2017junta} who developed a method
to show that the extremal family that is free of a certain forbidden
configuration is some junta $\mathcal{J}_{\mathrm{ex}}.$ The method
is combined of the following ingredients. 
\begin{itemize}
\item \textbf{Ingredient 1: }Show that any family is essentially contained
in a junta that does not contain the forbidden configuration. 
\item \textbf{Ingredient 2: }Show that $\j_{\mathrm{ex}}$ is the largest
junta that does not contain the forbidden configuration, and prove
a stability result of this statement. I.e. if $\j$ is a junta that
does not contain the forbidden configuration and whose size is close
to $\left|\j_{\mathrm{ex}}\right|$, then $\j$ is essentially contained
in $\j_{\mathrm{ex}}.$ 
\item \textbf{Ingredient 3: }Show that if $\f$ is a small alteration of
$\j_{\mathrm{ex}}$ that does not contain the forbidden configuration,
then $\left|\f\right|\le\left|\j_{\mathrm{ex}}\right|.$ 
\end{itemize}
These ingredients fit together to show that $\j_{\mathrm{ex}}$ is
the extremal family that does not contain the forbidden configuration.
Indeed, suppose that $\f$ is the extremal family. Then the first
ingredient yields that $\f$ is essentially contained in a junta $\j.$
In particular, the size of $\j$ is not much smaller than the size
of $\f,$ which is greater or equal to the size of $\j_{\mathrm{ex}}.$
The second ingredient implies that $\j$ is essentially contained
in $\j_{\mathrm{ex}}$, and hence $\f$ is essentially contained in
$\j_{\mathrm{ex}}.$ The third ingredient implies that $\left|\f\right|\le\left|\j_{\mathrm{ex}}\right|$,
and therefore $\j_{\mathrm{ex}}$ is the extremal family. 

In our case, the forbidden configuration is a $\left(d,k,\left(\frac{d+1}{d}+\zeta\right)k\right)$-cluster,
and the junta $\mathcal{J}_{\mathrm{ex}}$ is a star.

\textbf{Showing that the largest junta free of a $\left(d,k,\left(\frac{d+1}{d}+\zeta\right)k\right)$-cluster
is a star, and proving stability.}

We observe that any $j$-junta that does not contain a $\left(d,k,\left(\frac{d+1}{d}+\zeta\right)k\right)$-cluster
is actually $\left(d+1\right)$-wise intersecting, provided that $k\ge k_{0}\left(j\right)$.
Then, Ingredient 2 amounts to proving a stability result for Frankl's
Theorem (Theorem \ref{thm:Frankl's Theorem}), i.e. to showing that
a $\left(d+1\right)$-wise intersecting family whose size is close
to $\binom{n-1}{k-1}$ is close to a star. This was proved by Ellis,
Keller, and the author \cite{ellis2016stability}.

\textbf{Showing that any family free of a $\left(d,k,\left(\frac{d+1}{d}+\zeta\right)k\right)$-cluster
is essentially contained in a $\left(d+1\right)$-wise intersecting
junta. }

The proof is based on the regularity method and it goes as follows. 
\begin{enumerate}
\item Note that each set $J$ of constant size decomposes the sets in $\mathcal{F}$
into $2^{\left|J\right|}$ parts according to their intersection with
$J$. The first step is to show that we may find a set $J$ of constant
size, such that $\f$ is a union of parts that satisfy a certain quasirandomness
notion and a sufficiently small remainder that can be ignored. 
\item We then take our approximating junta to consist of the union of the
parts that satisfy the quasirandomness notion. The small size of the
remainder translates into the fact that $\f$ is essentially contained
in $\j$, and our goal becomes to show that $\j$ is $\left(d+1\right)$-wise
intersecting. The second step is to turn this task into a statement
about the quasirandom parts. Namely, we obtain that it is enough to
show that if $\f_{0},\ldots,\f_{d}$ are quasirandom families, then
they mutually contain a $\left(d,k,\left(\frac{d+1}{d}+\zeta\right)k\right)$-cluster,
provided that $n\ge n_{0}\left(\zeta\right)$, i.e. it is enough to
show that there exists sets $A_{0}\in\f_{0},\ldots,A_{d}\in\f_{d}$
whose intersection is empty, such that $\left|A_{0}\cup\cdots\cup A_{d}\right|\le\left(\frac{d+1}{d}+\zeta\right)k$.
The next steps concern this new task.
\item We choose an $l=k\left(1+\zeta'\right)$ for a small constant $\zeta'>0$,
and we write $\mathcal{F}_{i}^{\uparrow l}$ for the family of all
sets in $\binom{\left[n\right]}{l}$ that contain a set in $\f_{i}.$
The third step is to show that the probability that a random set in
$\binom{\left[n\right]}{l}$ is in $\mathcal{F}_{i}^{\uparrow l}$
is close to 1.
\item The fourth step is to give a simple union bound that shows that the
families $\f_{0}^{\uparrow l},\ldots,\f_{d+1}^{\uparrow l}$ mutually
contain a random $\left(d,l,\frac{d+1}{d}l\right)$-cluster with positive
probability. 
\item The last step is to deduce from the $\left(d,l,\frac{d+1}{d}l\right)$-cluster
appearing in the families $\f_{0}^{\uparrow l},\ldots,\f_{d+1}^{\uparrow l}$,
that a $\left(d,k,\left(\frac{d+1}{d}+\zeta\right)k\right)$-cluster
appears in the quasirandom families $\mathcal{F}_{0},\ldots,\f_{l}.$ 
\end{enumerate}
\textbf{Showing that the star is the largest family free of a }$\left(d,k,\left(\frac{d+1}{d}+\zeta\right)k\right)$\textbf{-cluster
in its neighborhood}.

Finally, we shall give an argument based on the Kruskal-Katona Theorem
\cite{katona1964intersection,kruskal1963number} to accomplish the
third ingredient, i.e. we show that if a family that does not contain
a $\left(d,k,\left(\frac{d+1}{d}+\zeta\right)k\right)$-cluster is
close to a star, then its size must be smaller than it. Let $\mathcal{F}\subseteq\binom{\left[n\right]}{k}$
be a family close to a star $\mathcal{S}$. We start by decomposing
$\mathcal{F}$ into the large family $\f_{1}:=\f\cap\mathcal{S}$
inside the star and the small family $\f_{0}:=\f\backslash\mathcal{S}$
outside of it. One can think of $\mathcal{F}$ as a family constructed
from $\mathcal{S}$ by first adding the element of $\f_{0}$ into
$\f$, thereby unavoidably putting $\left(d,k,\left(\frac{d+1}{d}+\zeta\right)k\right)$-clusters
inside it, and then removing the elements of $\mathcal{S}\backslash\f_{1}$
out of $\f$ to destroy all of these copies. Our goal then becomes
to show that $\f_{0}$ is negligible compared to $\left|\mathcal{S}\backslash\f_{1}\right|.$
The proof follows the following steps:
\begin{enumerate}
\item We choose $l$ slightly larger than $k$, and we use the Kruskal-Katona
Theorem (Theorem \ref{thm:Kruskal-Katona} bellow) to give a lower
bound on $\left|\mathcal{F}_{1}^{\uparrow l}\right|$ in terms of
$\left|\f_{1}\right|.$ 
\item We observe that the families $\f_{1}^{\uparrow l},\ldots,\f_{1}^{\uparrow l},\f_{0}^{\uparrow l}$
do not mutually contain a $\left(d,l,\frac{d+1}{d}l\right)$-cluster,
and we use this fact to deduce an upper bound on $\left|\f_{0}^{\uparrow l}\right|$
in terms of $\left|\f_{1}^{\uparrow l}\right|.$
\item We apply the Kruskal-Katona Theorem again in order to upper bound
the size of $\mathcal{F}_{0}$ in term of $\left|\f_{0}^{\uparrow l}\right|.$ 

Combining all these upper bound we obtain an upper bound of $\left|\f_{0}\right|$
in terms of $\left|\f_{1}\right|.$ It turns out that this upper bound
is sufficient to complete the proof. 
\end{enumerate}

\subsection{Notations }

We use bold letters to denote random variables. Let $X$ be some set.
We write $\boldsymbol{A}\sim\binom{X}{k}$ to denote that $\boldsymbol{A}$
is a uniformly random $k$-set in $X$. Let $\mathcal{F}\subseteq\binom{X}{k}$
be some family. We write 
\[
\mu\left(\f\right)=\frac{\left|\f\right|}{\binom{\left|X\right|}{k}}=\Pr_{\boldsymbol{A}\sim\binom{X}{k}}\left[\boldsymbol{A}\in\f\right].
\]
Given a set $J\subseteq X,$ and $B\subseteq J$, we write $\f_{J}^{B}$
for the family $\left\{ A\in\binom{X\backslash J}{k-\left|B\right|}|\,A\cup B\in\f\right\} .$
We therefore have 
\[
\mu\left(\f_{J}^{B}\right)=\Pr_{\boldsymbol{A}\sim\binom{\left[n\right]}{k}}\left[\boldsymbol{A}\in\f|\boldsymbol{A}\cap J=B\right].
\]
 Let $J\subseteq\left[n\right]$, and let $\g\subseteq\p\left(J\right)$
be some family. We write $\left\langle \g\right\rangle $ for the
$\left|J\right|$-junta of all the sets $A\in\binom{\left[n\right]}{k}$
such that $A\cap J$ is in $\g.$ We call $\left\langle \g\right\rangle $
the \emph{junta generated} by $\g.$ 

A family $\mathcal{A}$ is said to be monotone if for any $A\in\mathcal{A}$
and any $B\supseteq A$ we have $B\in\mathcal{A}.$ The \emph{monotone
closure of $\f$, denoted by} $\f^{\uparrow}$, is the monotone family
of all sets in $\p\left(n\right)$ that contain a set in $\f.$ Hence,
$\f^{\uparrow l}=\f^{\uparrow}\cap\binom{\left[n\right]}{l}$. 

The $p$-biased measure is a probability distribution on sets $\boldsymbol{A}\sim\p\left(\left[n\right]\right),$
where we put each element $i$ in $\boldsymbol{A}$ independently
with probability $p.$ For a family $\mathcal{A}\subseteq\p\left(\left[n\right]\right)$
we write 
\[
\mu_{p}\left(\mathcal{A}\right)=\Pr_{\boldsymbol{A}\sim\mu_{p}}\left[\boldsymbol{A}\in\mathcal{A}\right].
\]

\section{Consequences of the Kruskal-Katona Theorem }

The Kruskal-Katona Theorem gives us a lower bound on $\left|\f^{\uparrow l}\right|$
in terms of $\left|\mathcal{\f}\right|.$ Before stating it we shall
give a trivial lower bound that would also be useful to us.
\begin{lem}
\label{lem:trivial kk}Let $k<l<n$ be some natural numbers and let
$\mathcal{\f}\subseteq\binom{\left[n\right]}{k}$ be some family.
Then $\mu\left(\mathcal{\f}\right)\le\mu\left(\mathcal{\f}^{\uparrow l}\right).$ 
\end{lem}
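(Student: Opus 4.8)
The statement to prove is Lemma~\ref{lem:trivial kk}: for $k<l<n$ and $\f\subseteq\binom{[n]}{k}$, we have $\mu(\f)\le\mu(\f^{\uparrow l})$. The cleanest approach is a direct double-counting / averaging argument, comparing a uniformly random $k$-set to a uniformly random $l$-set obtained from it. Concretely, let $\boldsymbol{B}\sim\binom{[n]}{l}$ be uniform, and then let $\boldsymbol{A}$ be a uniformly random $k$-subset of $\boldsymbol{B}$. The key observations are: (i) by symmetry, the marginal distribution of $\boldsymbol{A}$ is exactly uniform on $\binom{[n]}{k}$, so $\Pr[\boldsymbol{A}\in\f]=\mu(\f)$; and (ii) if $\boldsymbol{A}\in\f$ then $\boldsymbol{B}\supseteq\boldsymbol{A}$ with $\boldsymbol{A}\in\f$, hence $\boldsymbol{B}\in\f^{\uparrow l}$ by definition of the monotone closure. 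Therefore $\mu(\f)=\Pr[\boldsymbol{A}\in\f]\le\Pr[\boldsymbol{B}\in\f^{\uparrow l}]=\mu(\f^{\uparrow l})$.

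The one point that needs a word of justification is claim (i), that the two-step process (pick $\boldsymbol{B}$ uniform of size $l$, then $\boldsymbol{A}$ uniform of size $k$ inside it) yields a uniform $k$-set. This is a standard symmetry fact: for any fixed $k$-set $A$, the probability it is produced equals $\binom{n-k}{l-k}/\binom{n}{l}\cdot 1/\binom{l}{k}$, which by the identity $\binom{n}{l}\binom{l}{k}=\binom{n}{k}\binom{n-k}{l-k}$ simplifies to $1/\binom{n}{k}$, independent of $A$. Alternatively one can phrase the whole thing as a pure counting inequality: each $A\in\f$ lies in exactly $\binom{n-k}{l-k}$ sets of size $l$, all of which belong to $\f^{\uparrow l}$; counting incidences between $\f$ and $\binom{[n]}{l}$ along containment gives $|\f|\binom{n-k}{l-k}\le|\f^{\uparrow l}|\binom{l}{k}$, and dividing by $\binom{n}{k}=\binom{n}{l}\binom{l}{k}/\binom{n-k}{l-k}$ yields the claim. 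Either formulation is routine.

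There is essentially no main obstacle here — this is the "trivial" bound the text promises, strictly weaker than Kruskal--Katona, and the proof is a one-line averaging argument once the uniformity of the induced marginal is noted. The only thing to be slightly careful about is not to overcomplicate it: one should resist invoking Kruskal--Katona or any shadow machinery, since the whole point of stating this lemma separately is that it holds with a trivial proof and will be convenient to cite in regimes where the Kruskal--Katona estimate is not needed.
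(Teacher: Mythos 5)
Your proof is correct and is essentially the same coupling/averaging argument as the paper's: the paper picks a uniform $k$-set $\boldsymbol{A}$ and extends it by a random $(l-k)$-set to get a uniform $l$-set, while you generate the same joint distribution in the opposite order (uniform $l$-set, then a uniform $k$-subset), verifying the uniform marginal explicitly. No substantive difference.
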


\begin{proof}
Choose a set $\boldsymbol{A}\sim\binom{\left[n\right]}{k}$ and choose
a set $\boldsymbol{B}\sim\binom{\left[n\right]\backslash\boldsymbol{A}}{l-k}$.
We have 
\[
\mu\left(\mathcal{F}^{\uparrow l}\right)=\Pr\left[\boldsymbol{A}\cup\boldsymbol{B}\in\mathcal{F}^{\uparrow l}\right]\ge\Pr\left[\boldsymbol{A}\in\mathcal{F}\right]=\mu\left(\mathcal{F}\right).
\]
 
\end{proof}
The lexicographically ordering on $\binom{\left[n\right]}{k}$ is
the ordering on sets defined by $A<_{L}B$ if $\min\left\{ A\Delta B\right\} \in A.$
We let $\mathcal{L}\left(i,k,n\right)$ be the family of the $i$
sets in $\binom{\left[n\right]}{k}$ that are first in the lexicographic
ordering. Thus, $\mathcal{L}\left(\binom{n-1}{k-1},k,n\right)$ is
the star of all sets that contain the element 1. The Kruskal-Katona
\cite{katona2009theorem,kruskal1963number} Theorem is known to be
equivalent to the following:
\begin{thm}[Kruskal-Katona]
\label{thm:Kruskal-Katona} Let $k<l<n,$ and let $i\le\binom{\left[n\right]}{k}$.
Suppose that $\left|\mathcal{F}\right|\ge\left|\mathcal{L}^{\left(i,k,n\right)}\right|.$
Then $\left|\mathcal{F}^{\uparrow l}\right|\ge\left|\left(\mathcal{L}^{\left(i,k,n\right)}\right)^{\uparrow l}\right|.$ 
\end{thm}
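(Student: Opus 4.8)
The plan is to prove this classical fact --- the Kruskal--Katona theorem, here in its ``upper shadow / lexicographic'' form --- by the standard route of \emph{compressions} followed by induction on $n$; alternatively, complementing every set ($A\mapsto\left[n\right]\setminus A$) turns it into the usual ``lower shadow / colexicographic'' formulation, which can be quoted directly. For a self-contained argument I would proceed as follows. \textbf{Reduction to the immediate shadow.} We may assume $\left|\f\right|=\left|\mathcal{L}^{\left(i,k,n\right)}\right|$, since shrinking $\f$ only shrinks $\f^{\uparrow l}$. One checks that the immediate upper shadow $\left(\mathcal{L}^{\left(i,k,n\right)}\right)^{\uparrow\left(k+1\right)}$ is again a lexicographic initial segment, say $\mathcal{L}^{\left(i_{1},k+1,n\right)}$ (equivalently, by complementation, the lower shadow of a colexicographic initial segment is a colexicographic initial segment --- a classical fact). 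Since $\f^{\uparrow l}$ is obtained from $\f$ by iterating the immediate upper shadow, applying the case $l=k+1$ repeatedly and using that $\left(\mathcal{L}^{\left(i,k,n\right)}\right)^{\uparrow\left(k+1\right)},\left(\mathcal{L}^{\left(i,k,n\right)}\right)^{\uparrow\left(k+2\right)},\ldots$ all remain lexicographic initial segments reduces the theorem to: if $\left|\f\right|\ge\left|\mathcal{L}^{\left(i,k,n\right)}\right|$ then $\left|\f^{\uparrow\left(k+1\right)}\right|\ge\left|\left(\mathcal{L}^{\left(i,k,n\right)}\right)^{\uparrow\left(k+1\right)}\right|$.

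\textbf{Compressions.} For $1\le a<b\le n$ let $C_{ab}$ be the operator that replaces, in each member of $\f$ containing $b$ but not $a$, the element $b$ by $a$, unless the resulting set is already in $\f$. Then $\left|C_{ab}\f\right|=\left|\f\right|$, and a short case analysis on which $\left(k+1\right)$-sets survive shows $\left|\left(C_{ab}\f\right)^{\uparrow\left(k+1\right)}\right|\le\left|\f^{\uparrow\left(k+1\right)}\right|$. Each application of a $C_{ab}$ that does something strictly decreases $\sum_{A\in\f}\sum_{a\in A}a$, so after finitely many steps we reach a \emph{shifted} family --- one fixed by every $C_{ab}$ --- of the same size and with no larger immediate upper shadow. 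Hence we may assume $\f$ is shifted.

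\textbf{Induction on $n$.} Split a shifted $\f$ at the element $1$: put $\f'=\left\{ A\setminus\left\{ 1\right\} :1\in A\in\f\right\} \subseteq\binom{\left\{ 2,\ldots,n\right\} }{k-1}$ and $\f''=\left\{ A\in\f:1\notin A\right\} \subseteq\binom{\left\{ 2,\ldots,n\right\} }{k}$. Stability under the operators $C_{1b}$ forces, for every $A\in\f''$ and every $b\in A$, that $\left(A\setminus\left\{ b\right\} \right)\cup\left\{ 1\right\} \in\f$, i.e. $A\setminus\left\{ b\right\} \in\f'$; hence $\f''\subseteq\left(\f'\right)^{\uparrow k}$, all shadows from now on computed inside $\left\{ 2,\ldots,n\right\} $. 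Splitting $\f^{\uparrow\left(k+1\right)}$ according to whether a set contains $1$, and using this inclusion to absorb one term, yields the exact identity $\left|\f^{\uparrow\left(k+1\right)}\right|=\left|\left(\f'\right)^{\uparrow k}\right|+\left|\left(\f''\right)^{\uparrow\left(k+1\right)}\right|$. By the inductive hypothesis each summand is at least the corresponding lexicographic optimum over subsets of the $\left(n-1\right)$-element ground set, and one is left with the purely combinatorial task of showing that, subject to the constraint $\left|\f''\right|\le\left|\left(\f'\right)^{\uparrow k}\right|$ imposed by shiftedness, this sum is minimised by the split of $\mathcal{L}^{\left(i,k,n\right)}$ itself, whose ``contains $1$'' part is $\mathcal{L}^{\left(\cdot,k-1,n-1\right)}$ and whose ``avoids $1$'' part is $\mathcal{L}^{\left(\cdot,k,n-1\right)}$.

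\textbf{The main obstacle.} The reductions and the compression estimate are routine bookkeeping; the genuine work is this last comparison. The right tool is the $k$-cascade (binomial) representation of $i=\left|\f\right|$, which pins down the optimal split and lets one verify the inequality term by term (or via an exchange argument between the two parts); note that without the constraint $\left|\f''\right|\le\left|\left(\f'\right)^{\uparrow k}\right|$ the inequality is simply false, so shiftedness must be used in an essential way exactly here. For the present paper one may of course sidestep all of this by quoting the standard (lower shadow) form of Kruskal--Katona and translating through complementation.
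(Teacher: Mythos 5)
The paper does not prove Theorem \ref{thm:Kruskal-Katona} at all: it is stated as a classical result, quoted with references to Kruskal and Katona, in the equivalent upper-shadow/lexicographic form. So your closing remark --- that for the purposes of this paper one may simply cite the standard lower-shadow form and translate via complementation $A\mapsto\left[n\right]\setminus A$ (together with the relabelling $x\mapsto n+1-x$) --- is exactly the paper's route, and that part of your proposal is unobjectionable.

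As a self-contained argument, however, your sketch stops short of the theorem's actual content. The surrounding reductions are correctly set up: the immediate upper shadow of a lexicographic initial segment is again a lexicographic initial segment, so the case $l=k+1$ suffices; the compression $C_{ab}$ preserves size and does not increase the immediate upper shadow (the complement-dual of the standard compression lemma); compressions terminate; and for a shifted family the identity $\left|\mathcal{F}^{\uparrow\left(k+1\right)}\right|=\left|\left(\mathcal{F}'\right)^{\uparrow k}\right|+\left|\left(\mathcal{F}''\right)^{\uparrow\left(k+1\right)}\right|$ together with $\mathcal{F}''\subseteq\left(\mathcal{F}'\right)^{\uparrow k}$ is correct. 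But the step you yourself label ``the main obstacle'' --- showing that, subject to the constraint imposed by shiftedness, the sum of the two lexicographic optima over the ground set $\left\{2,\ldots,n\right\}$ is at least $\left|\left(\mathcal{L}\left(i,k,n\right)\right)^{\uparrow\left(k+1\right)}\right|$ --- is the entire quantitative content of Kruskal--Katona. Pointing to ``the cascade representation'' is not a proof: one must identify how $\mathcal{L}\left(i,k,n\right)$ itself splits at the element $1$ into lex segments of $\binom{\left\{2,\ldots,n\right\}}{k-1}$ and $\binom{\left\{2,\ldots,n\right\}}{k}$, and then actually carry out the cascade computation (or a substitute exchange argument) showing no admissible split does better; this is where all the work of the classical induction lies, and it is asserted rather than done. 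So as written the proposal is a correct outline of the standard compression-and-induction proof with a genuine gap at its decisive step; if you do not intend to fill it, the honest course is the one the paper takes, namely quoting the theorem.
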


We shall make use of the following corollaries of the Kruskal-Katona
theorem. 
\begin{cor}
\label{cor:kk up}Let $k<l<n$, let $\epsilon>0,$ let $\mathcal{F}\subseteq\binom{\left[n\right]}{k}$
be some family, and let $\mathcal{G}=\mathcal{F}^{\uparrow l}$. 
\begin{enumerate}
\item If $\left|\mathcal{F}\right|\ge\binom{n-1}{k-1}\left(1-\epsilon\right),$
then $\left|\mathcal{G}\right|\ge\binom{n-1}{l-1}\left(1-\epsilon\right)$. 
\item If we moreover have $\mathcal{\left|\mathcal{F}\right|}\ge\binom{n-1}{k-1}-\left(1-\epsilon\right)\binom{n-m}{k-1},$
then $\left|\mathcal{G}\right|\ge\binom{n-1}{l-1}-\left(1-\epsilon\right)\binom{n-m}{l-1}.$ 
\item Consequently, for each $\zeta>0$ there exists a constant $C>0$,
such that the following holds. Suppose that $k,l\in\left(\zeta n,\left(1-\zeta\right)n\right),$
and that $l-k>\zeta n$. If $\left|\f\right|\ge\binom{n-1}{k-1}\left(1-\epsilon\right),$
then $\left|\g\right|\ge\binom{n-1}{l-1}\left(1-C\epsilon^{1+\frac{1}{C}}\right).$
\end{enumerate}
\end{cor}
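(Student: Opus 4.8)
The plan is to derive all three parts of Corollary~\ref{cor:kk up} from the Kruskal--Katona Theorem (Theorem~\ref{thm:Kruskal-Katona}) by comparing $\mathcal{F}$ with a suitable lexicographic initial segment. For part~(1), observe that $\binom{n-1}{k-1}(1-\epsilon)$ may not be an integer, so set $i=\lceil \binom{n-1}{k-1}(1-\epsilon)\rceil$, so that $|\mathcal F|\ge i=|\mathcal L^{(i,k,n)}|$ and Theorem~\ref{thm:Kruskal-Katona} gives $|\mathcal G|\ge |(\mathcal L^{(i,k,n)})^{\uparrow l}|$. The point is then to show that the right-hand side is at least $\binom{n-1}{l-1}(1-\epsilon)$; since $\mathcal L^{(i,k,n)}$ contains a large sub-star (all $k$-sets through $1$ that lie in it) together with an initial segment of the $k$-sets avoiding $1$, its $l$-shadow-up contains the full star $\binom{[n]\setminus\{1\}}{l-1}$-many sets plus more, and a direct counting argument pins down the bound. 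A cleaner route, which I would prefer, is to apply the inequality separately: write $\mathcal L^{(i,k,n)}=\{1\}\vee\binom{[2,n]}{k-1}\cup \mathcal{L}'$ where $\mathcal L'$ is the leftover initial segment of $\binom{[2,n]}{k}$ of size $i-\binom{n-1}{k-1}$... but since $i\le\binom{n-1}{k-1}$ we instead have $\mathcal L^{(i,k,n)}$ is an initial segment of the star on $1$, so its up-closure at level $l$ contains $\{1\}\vee(\text{up-closure in }[2,n]\text{ at level }l-1\text{ of the initial segment of }\binom{[2,n]}{k-1})$, and applying Lemma~\ref{lem:trivial kk} (or Kruskal--Katona inside $[2,n]$) to that initial segment of relative size $1-\epsilon$ gives the claim.

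For part~(2), the hypothesis $|\mathcal F|\ge\binom{n-1}{k-1}-(1-\epsilon)\binom{n-m}{k-1}$ is exactly the statement that $|\mathcal F|\ge |\mathcal L^{(i,k,n)}|$ where $\mathcal L^{(i,k,n)}$ is, up to the usual integer-rounding nuisance, the union of the full star on $1$ with everything \emph{except} an initial segment of size $(1-\epsilon)\binom{n-m}{k-1}$ among the $k$-sets avoiding $1$; equivalently $\mathcal L^{(i,k,n)}\supseteq \binom{[n]}{k}\setminus\big(\{k\text{-sets avoiding }\{1,\dots,m\}\}\text{-initial segment}\big)$-type description. The clean way is: the complement $\binom{[n]}{k}\setminus\mathcal L^{(i,k,n)}$ is a \emph{final} segment of the lex order, hence (by symmetry / complementation, or directly) its down-shadow-up behaves like an initial segment on a smaller ground set, and Kruskal--Katona applied to it shows $|\binom{[n]}{l}\setminus\mathcal G|\le (1-\epsilon)\binom{n-m}{l-1}$. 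Concretely I would let $\mathcal E=\binom{[n]}{k}\setminus\mathcal F$, note $|\mathcal E|\le (1-\epsilon)\binom{n-m}{k-1}$, observe that $\mathcal G=\mathcal F^{\uparrow l}$ satisfies $\binom{[n]}{l}\setminus\mathcal G\subseteq(\mathcal E^c\text{-type set})$ --- more precisely an $l$-set $B$ is outside $\mathcal G$ iff \emph{every} $k$-subset of $B$ lies in $\mathcal E$, i.e.\ $B\in(\mathcal E)^{\text{all-}k\text{-subsets}}$, which is the ``down'' analogue --- and then apply Kruskal--Katona in the dual/complementary form to the downward-closed-type family, using that the extremal configuration for ``every $k$-subset lies in a set of size $(1-\epsilon)\binom{n-m}{k-1}$'' is the initial segment of $\binom{[n]\setminus[m-1]}{k}$ scaled by $1-\epsilon$, whose $l$-level analogue has size $(1-\epsilon)\binom{n-m}{l-1}$.

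Part~(3) is then a soft consequence of part~(1) by a convexity/interpolation estimate on binomial coefficients. From part~(1) we have $\mu(\mathcal G)\ge\mu(\mathcal F^{\uparrow l})$ with the crude bound $1-\epsilon$, but to upgrade $1-\epsilon$ to $1-C\epsilon^{1+1/C}$ I would instead run part~(2): the hypothesis $|\mathcal F|\ge\binom{n-1}{k-1}(1-\epsilon)$ can be rewritten, for a suitable $m=m(\epsilon,n)$, in the form required by part~(2) --- namely choose $m$ so that $(1-\epsilon')\binom{n-m}{k-1}\approx\epsilon\binom{n-1}{k-1}$, which in the regime $k\in(\zeta n,(1-\zeta)n)$ forces $m\sim c\log(1/\epsilon)/\log(1/(1-\text{something}))$, i.e.\ $\binom{n-m}{k-1}/\binom{n-1}{k-1}\approx((k)/n)^{\,m}$ decays geometrically. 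Plugging this $m$ into the conclusion of part~(2) gives $|\mathcal G|\ge\binom{n-1}{l-1}-(1-\epsilon')\binom{n-m}{l-1}$, and since $\binom{n-m}{l-1}/\binom{n-1}{l-1}\approx((l)/n)^m$ with $l/n$ bounded away from $1$, this loss is at most $\big(\text{const}\big)^m\le \epsilon^{1+1/C}\binom{n-1}{l-1}$ after absorbing constants, because $m\gtrsim\log(1/\epsilon)$ makes the geometric factor beat any fixed power of $\epsilon$; working out the precise relation between the decay rates of $(k/n)^m$ and $(l/n)^m$ and choosing $C$ large enough in terms of $\zeta$ and $\log(l/n)/\log(k/n)$ gives the stated bound.

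The main obstacle I anticipate is part~(2): Theorem~\ref{thm:Kruskal-Katona} as stated is a statement about the \emph{upward} shadow of an initial segment, whereas part~(2) is naturally about the upward shadow of a family whose complement is an initial segment, so I will need to carefully reformulate it --- either by using the standard equivalence between Kruskal--Katona for shadows and for ``upper shadows of complements,'' or by directly identifying the extremal configuration (the star on $\{1\}$ plus all of $\binom{[n]\setminus[m]}{k}$ minus an $\epsilon$-fraction) and verifying that its $l$-up-closure has exactly the claimed size through an explicit, if slightly tedious, binomial computation. The interpolation in part~(3) is routine once part~(2) is in hand, but getting the exponent $1+\frac1C$ right (rather than, say, $1+\frac{1}{C\log n}$) hinges on the fact that $k,l,l-k$ are all $\Theta(n)$, which makes all the relevant ratios of binomial coefficients genuinely geometric in $m$ with rates bounded away from $1$.
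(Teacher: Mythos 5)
Your overall plan---reduce to a lexicographic initial segment via Theorem \ref{thm:Kruskal-Katona} and analyse that segment---is the paper's, and your part (1) (the segment lies in the star on $1$, pass to $\mathcal{F}_{\{1\}}^{\{1\}}$ and apply Lemma \ref{lem:trivial kk} inside $[2,n]$) matches the paper's proof. Part (3) is also the paper's argument in outline (pick $m$ so that part (2) applies and compare geometric decay rates), but two details need repair: the relevant ratios are $\binom{n-m}{k-1}/\binom{n-1}{k-1}\approx\left(1-\tfrac{k}{n}\right)^{m}$ and $\binom{n-m}{l-1}/\binom{n-1}{l-1}\approx\left(1-\tfrac{l}{n}\right)^{m}$, not $(k/n)^{m}$ and $(l/n)^{m}$; and the loss cannot ``beat any fixed power of $\epsilon$''---what actually saves the argument is that each factor $1-\tfrac{l-1}{n-i}$ is at most $(1-\zeta)$ times the factor $1-\tfrac{k-1}{n-i}$, while each $1-\tfrac{k-1}{n-i}\ge\zeta$, so the $l$-ratio is at most $C$ times the $\left(1+\tfrac1C\right)$-th power of the $k$-ratio once $1-\zeta\le\zeta^{1/C}$. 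You also need a lower bound of the form $1-\epsilon'\ge\zeta$ for the coefficient produced by your choice of $m$ (the paper extracts it from the maximality of $m$); without it the $(1-\epsilon')$ factors cannot be absorbed into $C$.

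The genuine gap is part (2). As written, both of your key intermediate inequalities are false: for $\mathcal{E}=\binom{[n]}{k}\setminus\mathcal{F}$ one has $|\mathcal{E}|\ge\binom{n}{k}-\binom{n-1}{k-1}=\binom{n-1}{k}$, which is far larger than $(1-\epsilon)\binom{n-m}{k-1}$; and once $\mathcal{F}$ is (after the lex reduction) contained in the star on $1$, the family $\binom{[n]}{l}\setminus\mathcal{G}$ contains every $l$-set avoiding $1$, so it cannot have size at most $(1-\epsilon)\binom{n-m}{l-1}$. The complements must be taken inside the star, i.e.\ at the level of $\mathcal{F}_{\{1\}}^{\{1\}}\subseteq\binom{[2,n]}{k-1}$. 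Even with that repair, your route still needs to bound the number of $(l-1)$-subsets of $[2,n]$ all of whose $(k-1)$-subsets avoid $\mathcal{F}_{\{1\}}^{\{1\}}$, which is a lower-shadow (dual) Kruskal--Katona statement rather than Theorem \ref{thm:Kruskal-Katona} as stated; you would have to import the Lov\'asz form or prove the standard equivalence, and this is exactly the obstacle you flag and leave unresolved, so the claimed extremality of ``the initial segment of $\binom{[n]\setminus[m-1]}{k}$ scaled by $1-\epsilon$'' is an assertion, not a proof. The paper avoids all of this: for $i=\binom{n-1}{k-1}-\binom{n-m}{k-1}$ the segment $\mathcal{L}(i,k,n)$ is explicitly $\{A:\,1\in A,\ A\cap[2,m]\ne\varnothing\}$, so the surplus of $\mathcal{F}$ beyond it is $\mathcal{F}_{[m]}^{\{1\}}\subseteq\binom{[m+1,n]}{k-1}$ of measure at least $\epsilon$, and Lemma \ref{lem:trivial kk} applied inside $[m+1,n]$ at level $l-1$ gives $|\mathcal{G}|\ge\binom{n-1}{l-1}-(1-\epsilon)\binom{n-m}{l-1}$ directly, with no dual form of Kruskal--Katona; I recommend closing part (2) that way.
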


\begin{proof}
The Kruskal-Katona Theorem implies that it suffices to prove the corollary
in the case where $\mathcal{F}=\mathcal{L}\left(i,k,n\right).$ 

\textbf{Proving (1).} Note that we may suppose that $\left|\f\right|\le\binom{n-1}{k-1}.$
Let $\mathcal{S}$ be the star of all elements containing 1. Since
any set in $\mathcal{S}$ is (lexicographically) smaller than any
set not in $\mathcal{S}$, the family $\mathcal{F}$ is contained
in $\mathcal{S}$. 

Thus, $\mu\left(\mathcal{F}_{\left\{ 1\right\} }^{\left\{ 1\right\} }\right)\ge1-\epsilon,$
and so Lemma \ref{lem:trivial kk} implies that $\mu\left(\left(\mathcal{F}_{\left\{ 1\right\} }^{\left\{ 1\right\} }\right)^{\uparrow\left(l-1\right)}\right)\ge1-\epsilon.$
Note that we have $A\in\left(\mathcal{F}_{\left\{ 1\right\} }^{\left\{ 1\right\} }\right)^{\uparrow l-1}$
if and only if $A\cup\left\{ 1\right\} \in\mathcal{G}.$ Hence, 
\[
\left|\mathcal{G}\right|=\left|\left(\mathcal{F}_{\left\{ 1\right\} }^{\left\{ 1\right\} }\right)^{\uparrow\left(l-1\right)}\right|=\binom{n-1}{l-1}\mu\left(\left(\mathcal{F}_{\left\{ 1\right\} }^{\left\{ 1\right\} }\right)^{\uparrow\left(l-1\right)}\right)\ge\binom{n-1}{l-1}\left(1-\epsilon\right),
\]
 as desired. 

\textbf{Proving (2).} Again we may assume that $\left|\f\right|\le\binom{n-1}{k-1}$.
Write $i=\binom{n-1}{k-1}-\binom{n-m}{k-1}$, and note that $\mathcal{L}\left(i,k,n\right)$
is the family 
\[
\left\{ A\in\binom{\left[n\right]}{k}|\,1\in A\,,\,A\cap\left[2,\ldots,m\right]\ne\varnothing\right\} .
\]
 Since $\left|\f\right|>i$ we obtain that $\f\supseteq\mathcal{L}\left(i,k,n\right).$
Additionally, the intersection of any sets in $\f\backslash\mathcal{L}\left(i,k,n\right)$
with the set $\left[m\right]$ is the set $\left\{ 1\right\} .$ Therefore,
\[
\mu\left(\mathcal{\f}_{\left[m\right]}^{\left\{ 1\right\} }\right)=\frac{\left|\f\right|-i}{\binom{n-m}{k-1}}\ge\epsilon.
\]
 By Lemma \ref{lem:trivial kk}, $\mu\left(\left(\f_{\left[m\right]}^{\left\{ 1\right\} }\right)^{\uparrow l-1}\right)\ge\epsilon$. 

Write $j=\binom{n-1}{l-1}-\binom{n-m}{l-1}.$ Note that, similarly
to the family $\f$, the family $\mathcal{G}$ contains the family
\[
\mathcal{L}\left(j,l,n\right)=\left\{ A\in\binom{\left[n\right]}{l}|\,1\in A,\,A\cap\left[2,\ldots,m\right]\ne\varnothing\right\} .
\]
 Moreover, all the elements of $\mathcal{G}\backslash\mathcal{L}\left(j,l,n\right)$
are the elements of the form $A\cup\left\{ 1\right\} $, where $A\in\left(\mathcal{\f}_{\left[m\right]}^{\left\{ 1\right\} }\right)^{\uparrow l-1}$.
Therefore,
\[
\epsilon\le\mu\left(\left(\mathcal{\f}_{\left[m\right]}^{\left\{ 1\right\} }\right)^{\uparrow\left(l-1\right)}\right)=\mu\left(\g_{\left[m\right]}^{\left\{ 1\right\} }\right)=\frac{\left|\g\right|-j}{\binom{n-m}{l-1}}.
\]
 Rearranging and substituting the value of $j$, we have 
\[
\left|\mathcal{G}\right|\ge\binom{n-1}{l-1}-\left(1-\epsilon\right)\binom{n-m}{l-1}.
\]
 This completes the proof of (2).

\textbf{Deducing (3) from (2).} Let $m$ be maximal with $\left|\f\right|\ge\binom{n-1}{k-1}-\binom{n-m}{k-1},$
and write 
\[
\mathcal{\left|\mathcal{F}\right|}=\binom{n-1}{k-1}-\left(1-\epsilon'\right)\binom{n-m}{k-1}=\binom{n-1}{k-1}\left(1-\frac{\left(1-\epsilon'\right)\binom{n-m}{k-1}}{\binom{n-1}{k-1}}\right).
\]
 By (2), 
\[
\left|\g\right|\ge\binom{n-1}{l-1}-\left(1-\epsilon'\right)\binom{n-m}{l-1}=\binom{n-1}{l-1}\left(1-\frac{\binom{n-m}{l-1}\left(1-\epsilon'\right)}{\binom{n-1}{l-1}}\right).
\]

Hence, to complete the proof we must show that 
\begin{equation}
\frac{\binom{n-m}{l-1}\left(1-\epsilon'\right)}{\binom{n-1}{l-1}}\le C\left(\frac{\left(1-\epsilon'\right)\binom{n-m}{k-1}}{\binom{n-1}{k-1}}\right)^{1+\frac{1}{C}},\label{eq:desired}
\end{equation}
 provided that $C=C\left(\zeta\right)$ is sufficiently large. 

\textbf{Getting rid of $\epsilon'$. }We shall now show that the $\left(1-\epsilon'\right)$-terms
of (\ref{eq:desired}) get swallowed by the constant $C$, i.e $\left(1-\epsilon'\right)=\Theta_{\zeta}\left(1\right).$
We may assume that $n-m\ge l-1$, for otherwise the left hand side
of (\ref{eq:desired}) is $0$. By the definition of $m$, 
\[
\left(1-\epsilon'\right)\binom{n-m}{k-1}\ge\binom{n-m-1}{k-1}=\left(1-\frac{k-1}{n-m}\right)\binom{n-m}{k-1}.
\]
 Hence, 
\begin{equation}
1-\epsilon'\ge1-\frac{k-1}{n-m}\ge1-\frac{k-1}{l-1}=\frac{l-k}{l-1}\ge\frac{l-k}{n}\ge\zeta.\label{eq:zeta}
\end{equation}
 This completes the proof that $\left(1-\epsilon'\right)=O_{\zeta}\left(1\right)$,
and so it is enough to show that 
\begin{equation}
\frac{\binom{n-m}{l-1}}{\binom{n-1}{l-1}}\le C\left(\frac{\binom{n-m}{k-1}}{\binom{n-1}{k-1}}\right)^{1+\frac{1}{C}},\label{eq:reduced eq}
\end{equation}
 provided that $C$ is sufficiently large. 

\textbf{Showing (\ref{eq:reduced eq}).} Rearranging (\ref{eq:reduced eq}),
our goal becomes to show that 
\[
\frac{\binom{n-m}{l-1}/\binom{n-1}{l-1}}{\binom{n-m}{k-1}/\binom{n-1}{k-1}}\le C\left(\frac{\binom{n-m}{k-1}}{\binom{n-1}{k-1}}\right)^{\frac{1}{C}}.
\]

This would follow once we show that that:
\begin{equation}
\frac{\binom{n-m}{k-1}}{\binom{n-1}{k-1}}=\left(1-\frac{k-1}{n-1}\right)\left(1-\frac{k-1}{n-2}\right)\cdots\left(1-\frac{k-1}{n-m+1}\right)\ge C'^{m-1},\label{eq: technical KK}
\end{equation}
 and
\begin{equation}
\frac{\binom{n-m}{l-1}/\binom{n-1}{l-1}}{\binom{n-m}{k-1}/\binom{n-1}{k-1}}=\frac{\left(1-\frac{l-1}{n-1}\right)\left(1-\frac{l-1}{n-2}\right)\cdots\left(1-\frac{l-1}{n-m+1}\right)}{\left(1-\frac{k-1}{n-1}\right)\left(1-\frac{k-1}{n-2}\right)\cdots\left(1-\frac{k-1}{n-m+1}\right)}\le C''^{m-1}\label{eq:technical kk2}
\end{equation}
 where $0<C',C''<1$ are constants depending only on $\zeta.$ 

Now note there are $m-1$ terms in the middle of (\ref{eq: technical KK})
and each is greater than $1-\frac{k-1}{n-m},$ which is greater than
$\zeta$ by (\ref{eq:zeta}). Similarly, there are $m-1$ terms in
the middle of (\ref{eq:technical kk2}), and each term satisfies
\[
\frac{1-\frac{l-1}{n-i}}{1-\frac{k-1}{n-i}}=1-\frac{\frac{l-k}{n-i}}{1-\frac{k-1}{n-i}}\le1-\frac{l-k}{n}\le1-\zeta.
\]
 This completes the proof of the lemma.
\end{proof}

\section{Proof of the approximation by junta result and of the stability result}

In this section we shall prove a stability result that says that any
family that does not contain a $\left(d,k,\left(\frac{d+1}{d}+\zeta\right)k\right)$-cluster
whose size close to that of a star must in itself be close to a star. 
\begin{prop}
\label{prop:Rough stability result}For each $\zeta,\epsilon>0$ there
exists $\delta>0,n_{0}\in\mathbb{N}$, such that the following holds.
Let $n>n_{0}$, let $\zeta<\frac{k}{n}<\frac{d}{d+1}-\zeta$, and
let $\f\subseteq\binom{\left[n\right]}{k}$ be some family that does
not contain a $\left(d,k,\left(\frac{d+1}{d}+\zeta\right)k\right)$-cluster.
If $\left|\f\right|\ge\binom{n-1}{k-1}\left(1-\delta\right),$ then
$\f$ is $\epsilon$-essentially contained in a star. 
\end{prop}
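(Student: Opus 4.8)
The plan is to run the "junta method" in its approximate form, following the strategy sketched in Section 2. The first and main step is to establish Ingredient 1 in a quantitative form: any family $\f\subseteq\binom{[n]}{k}$ that is free of a $\left(d,k,\left(\frac{d+1}{d}+\zeta\right)k\right)$-cluster is $\epsilon'$-essentially contained in a $j$-junta $\j=\langle\g\rangle$ that is itself free of the forbidden configuration, where $j=j(\zeta,\epsilon')$ is a constant and $\epsilon'$ can be taken as small as we like at the cost of enlarging $j$. To do this I would use the regularity/quasirandomness decomposition: fix a large constant $r$, and iteratively pull out coordinates on which $\f$ is "non-quasirandom" (in the sense that some restriction $\f_J^B$ has density bounded away from $\mu(\f)$ in the relevant direction), stopping after a constant number of steps by a density-increment argument. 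This yields a constant set $J$ and a partition of $\f$ into parts $\f_J^B$, $B\subseteq J$, each of which is either quasirandom or contributes a negligible fraction of $\binom{[n]}{k}$; the junta $\j$ is the union of the "large-and-quasirandom" parts, so that $|\f\setminus\j|\le\epsilon'\binom{n}{k}$ automatically.

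The second step is the "bootstrapping" argument that upgrades quasirandomness into the combinatorial conclusion that $\j$ is $(d+1)$-wise intersecting. Concretely, if $B_0,\dots,B_d\in\g$ were $d+1$ sets in $\g$ with $B_0\cap\cdots\cap B_d=\varnothing$, then the corresponding quasirandom parts $\f_J^{B_0},\dots,\f_J^{B_d}$ would — by the argument outlined as steps (3)–(5) of the proof sketch — mutually contain a $\left(d,k,\left(\frac{d+1}{d}+\zeta\right)k\right)$-cluster: pass to the monotone shadows $\left(\f_J^{B_i}\right)^{\uparrow l}$ at level $l=k(1+\zeta')$, observe that quasirandomness forces each of these shadows to have $\mu_{\cdot}$ close to $1$ inside $\binom{[n]\setminus J}{l-|B_i|}$, take a union bound to find $l$-sets $C_0,\dots,C_d$ — one from each shadow — whose intersection is empty and whose union has size $\le\frac{d+1}{d}l$, and finally shrink the $C_i$ back down to $k$-sets $A_i\in\f_J^{B_i}$ while keeping $A_0\cap\cdots\cap A_d=\varnothing$ and controlling $|A_0\cup\cdots\cup A_d|\le\left(\frac{d+1}{d}+\zeta\right)k$. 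The $B_i$'s are then absorbed: $A_i\cup B_i$ are genuine members of $\f$ forming the forbidden cluster in $\f$, a contradiction. Hence no such $B_0,\dots,B_d$ exist, i.e. $\g$ (and therefore $\j$) is $(d+1)$-wise intersecting. I would set up this step so that the required quasirandomness parameter $r$ (hence $j$) is determined by $\zeta$ and $\zeta'$ alone.

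The third step is to feed this into the known stability theorem for Frankl's theorem (the $(d+1)$-wise intersecting stability result of Ellis–Keller–Lifshitz, which I may assume). Since $|\f|\ge\binom{n-1}{k-1}(1-\delta)$ and $|\f\setminus\j|\le\epsilon'\binom{n}{k}$, the junta $\j$ is a $(d+1)$-wise intersecting $j$-junta of measure at least $\mu(\f)-\epsilon'\ge\frac{k}{n}(1-\delta)-\epsilon'$, which is within $\epsilon'+O(\delta)$ of the star bound $\binom{n-1}{k-1}/\binom{n}{k}$. A $j$-junta that is $(d+1)$-wise intersecting and has measure this close to $k/n$ must, by the stability theorem (or, since $j$ is constant, by a direct finite case analysis of the junta $\g$ on $J$ combined with Frankl's theorem on the quasirandom parts), be $O(\epsilon'+\delta)$-essentially contained in a star $\mathcal S$. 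Combining $\f\subseteq_{\epsilon'}\j\subseteq_{O(\epsilon'+\delta)}\mathcal S$ gives that $\f$ is $O(\epsilon'+\delta)$-essentially contained in $\mathcal S$; choosing $\epsilon'$ and then $\delta$ small enough in terms of the target $\epsilon$ completes the proof.

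The hard part is the first step, the quantitative approximation-by-junta statement, and within it the "bootstrapping" in step two that converts the analytic quasirandomness of the parts into the existence of an actual cluster spanning $d+1$ of them — this is where the Kruskal-Katona / monotone-shadow machinery of Section 3 and a careful union bound at level $l=k(1+\zeta')$ are needed, and where the restriction $\frac{k}{n}<\frac{d}{d+1}-\zeta$ is used to guarantee $\frac{d+1}{d}l\le n$ so that the random $\left(d,l,\frac{d+1}{d}l\right)$-cluster exists at all. The passage back from $l$-sets to $k$-sets while preserving both the empty-intersection and the union-size constraints is the most delicate estimate, but it is a routine (if lengthy) counting argument once the framework is in place.
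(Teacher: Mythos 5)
Your overall architecture is the same as the paper's: a regularity decomposition of $\f$ by a constant-size set $J$ into a junta $\left\langle \g\right\rangle$ plus a negligible remainder, a bootstrapping step showing $\left\langle \g\right\rangle$ is $\left(d+1\right)$-wise intersecting by producing a cluster across the restricted parts at a slightly higher level $l$, and then the Ellis--Keller--Lifshitz stability version of Frankl's theorem to place the junta (hence $\f$) inside a star. However, there is one genuine gap at the technical heart of the bootstrapping: you treat the claim that quasirandomness of $\f_{J}^{B_{i}}$ forces $\mu\bigl(\bigl(\f_{J}^{B_{i}}\bigr)^{\uparrow l}\bigr)$ to be close to $1$ as an ``observation'' obtainable from Kruskal--Katona/monotone-shadow machinery, and that machinery cannot deliver it. The regularity decomposition only guarantees that the retained parts have measure bounded below (say $>\epsilon/2$), and a family of small constant measure can easily have small upward shadow (a star at level $k$ has upward shadow equal to a star at level $l$); Kruskal--Katona converts size lower bounds into shadow lower bounds but never upgrades measure bounded away from $0$ into measure close to $1$. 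What drives the conclusion is the $\left(\left\lceil 1/\delta\right\rceil ,\delta\right)$-regularity itself, and the paper needs a genuinely analytic argument for this (its Lemma \ref{lem: Follows from Friedgut-Junta Theorem}, in the spirit of Dinur--Friedgut): Friedgut's junta theorem for monotone families applied to $\f^{\uparrow}$ at a $p$-biased measure chosen by a mean-value/Russo-type derivative bound, plus a Chernoff estimate, to show that if $\mu\left(\f^{\uparrow l}\right)$ were not close to $1$ then $\f$ would correlate with a junta and hence violate regularity. Without this lemma (or an equivalent), your union bound $\sum_{i}\bigl(1-\mu\bigl(\bigl(\f_{J}^{B_{i}}\bigr)^{\uparrow l}\bigr)\bigr)<1$ has no justification, so as written the proof of the junta-approximation step is incomplete.

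Conversely, the step you single out as the most delicate --- shrinking the $C_{i}$ back down while preserving the cluster properties --- is in fact immediate and requires no counting: $C_{i}\in\bigl(\f_{J}^{B_{i}}\bigr)^{\uparrow l}$ means by definition that there is $D_{i}\in\f_{J}^{B_{i}}$ with $D_{i}\subseteq C_{i}$, hence $A_{i}:=D_{i}\cup B_{i}\in\f$; the intersection of the $A_{i}$ is empty because inside $J$ it is contained in $B_{0}\cap\cdots\cap B_{d}=\varnothing$ and outside $J$ it is contained in $C_{0}\cap\cdots\cap C_{d}=\varnothing$, and the union lies in $J\cup\left(C_{0}\cup\cdots\cup C_{d}\right)$, of size at most $j+\left\lceil \frac{d+1}{d}l\right\rceil \le\left(\frac{d+1}{d}+\zeta\right)k$ for $n$ large. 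Your remaining steps (proving the decomposition by density increment rather than citing it, the requirement $\frac{k}{n}<\frac{d}{d+1}-\zeta$ so that the level-$l$ cluster fits in $\left[n\right]\backslash J$, and the final triangle-inequality assembly with the $\left(d+1\right)$-wise intersecting stability theorem) match the paper and are fine, provided you also keep the lower bound $\mu\left(\f_{J}^{B}\right)>\epsilon/2$ for the retained parts, which is exactly what makes the dichotomy in the missing lemma land on the ``shadow close to $1$'' side.
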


Note that Proposition \ref{prop:Rough stability result} is a weaker
version of Theorem \ref{thm:stability cluster result}. However, we
shall show that the `weak' Proposition \ref{prop:Rough stability result}
can be bootstrapped into the stronger Theorem \ref{thm:stability cluster result}
in Section \ref{sec:exact result}. 

This section is divided into three parts. 
\begin{enumerate}
\item We first show that a junta is free of a $\left(d,k,\left(\frac{d+1}{d}+\zeta\right)k\right)$-cluster
if and only if it is  $\left(d+1\right)$-wise intersecting. This
part is needed only for motivational purposes and we shall not use
this fact.
\item We then show that any family that is  free of a $\left(d,k,\left(\frac{d+1}{d}+\zeta\right)k\right)$-cluster
is essentially contained in a $\left(d+1\right)$-wise intersecting
junta.
\item Finally, we shall apply a stability result of Theorem \ref{thm:Frankl's Theorem}
by \cite{ellis2016stability} to deduce Proposition \ref{prop:Rough stability result}.
\end{enumerate}

\subsection{Any junta that is  free of a $\left(d,k,\left(\frac{d+1}{d}+\zeta\right)k\right)$-cluster
is $\left(d+1\right)$-wise intersecting }

We now show that a junta does not contain a $\left(d,k,\left(\frac{d+1}{d}+\zeta\right)k\right)$-cluster
if and only if it is $\left(d+1\right)$-wise intersecting.
\begin{prop}
Let $j>0$, let $s\ge\frac{d+1}{d}k+j,$ and let $n\ge s$. Then a
$j$-junta $\j\subseteq\binom{\left[n\right]}{k}$ is free of a $\left(d,k,s\right)$-cluster
if and only if it is $\left(d+1\right)$-wise intersecting. 
\end{prop}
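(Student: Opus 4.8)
The statement is an "iff", and one direction is immediate: if $\j$ is $(d+1)$-wise intersecting, then by definition it contains no $d+1$ sets with empty intersection, hence no $(d,k,s)$-cluster for any $s$. So the content is the forward direction: a $j$-junta $\j$ that is \emph{not} $(d+1)$-wise intersecting must contain a $(d,k,s)$-cluster, i.e. we can find $A_0,\dots,A_d\in\j$ with $A_0\cap\cdots\cap A_d=\varnothing$ and the additional size constraint $|A_0\cup\cdots\cup A_d|\le s$. The non-intersecting sets are handed to us by the assumption; the job is to choose them so their union is small.

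First I would fix the junta data: a set $J$ with $|J|=j$ and $\g\subseteq\p(J)$ with $\j=\langle\g\rangle$, so that membership of a $k$-set $A$ in $\j$ depends only on $A\cap J$. Since $\j$ is not $(d+1)$-wise intersecting, there exist $B_0,\dots,B_d\in\j$ with $B_0\cap\cdots\cap B_d=\varnothing$. Restricting to $J$, the sets $B_i\cap J\in\g$ already have empty common intersection (emptiness of the full intersection forces emptiness somewhere, but more simply: I will instead argue that we may take the witnessing sets to have empty intersection \emph{already inside} $J$ — if $\bigcap (B_i\cap J)\neq\varnothing$ pick an element $x$ in it and note $x\in\bigcap B_i$, contradiction, so in fact $\bigcap_i (B_i\cap J)=\varnothing$). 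Now the plan is to build new sets $A_i$ with $A_i\cap J=B_i\cap J$ — hence $A_i\in\j$ — but chosen to overlap as much as possible outside $J$. Concretely, fix a set $Y\subseteq [n]\setminus J$ with $|Y|=k-\max_i|B_i\cap J|$ that is "generic", and let each $A_i$ consist of $B_i\cap J$ together with the first $k-|B_i\cap J|$ elements of $Y$. Then $A_0\cup\cdots\cup A_d\subseteq J\cup Y$, so $|A_0\cup\cdots\cup A_d|\le j+k\le j+\frac{d}{d+1}\cdot\frac{d+1}{d}k$; I need to be a bit more careful to land below $s$, so I will instead take $|Y|=k-\delta$ where $\delta=\min_i|B_i\cap J|$ and pad — but the crude bound $|A_0\cup\cdots\cup A_d|\le j+k$ is what drives everything. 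Since $s\ge\frac{d+1}{d}k+j> k+j$ (as $k\ge 1$, $\frac{d+1}{d}k>k$), we get $|A_0\cup\cdots\cup A_d|\le j+k<s$, which is exactly the room the hypothesis $s\ge\frac{d+1}{d}k+j$ was designed to give.

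The remaining point is to check that the $A_i$ still have empty common intersection. By construction $A_i\cap J=B_i\cap J$, so $\bigcap_i A_i\subseteq (\bigcap_i B_i\cap J)\cup (\text{common part in }Y)=\bigcap_i(B_i\cap J)$ on the $J$-side, which is empty; and on the $Y$-side, the common part is $\{$first $k-\max_i|B_i\cap J|$ elements of $Y\}$, which is \emph{not} empty in general — so this naive nesting fails. The fix: don't nest the tails monotonically; instead, when the $B_i\cap J$ already have empty intersection, I only need \emph{one} coordinate outside $J$ witnessing disemptiness if needed — but here I don't, since $\bigcap_i(B_i\cap J)=\varnothing$ already. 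So I should choose the tails in $Y$ to be \emph{equal} across all $i$ when the sizes permit, or more simply: pick a single set $Z\subseteq [n]\setminus J$ of size $k-\delta$ where $\delta=\max_i|B_i\cap J|$... the sizes $|B_i\cap J|$ differ, so instead, for each $i$ let $A_i=(B_i\cap J)\cup Z_i$ where $Z_i\subseteq Z$ with $|Z_i|=k-|B_i\cap J|$ and $Z$ is a fixed set with $|Z|=k$ (possible since $n\ge s\ge k+j$, so $[n]\setminus J$ has $\ge k$ elements), choosing the $Z_i$ nested \emph{within} $Z$. Then $A_0\cup\cdots\cup A_d\subseteq J\cup Z$ has size $\le j+k<s$; and $\bigcap_i A_i=\bigl(\bigcap_i(B_i\cap J)\bigr)\cup\bigl(\bigcap_i Z_i\bigr)$. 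The first term is empty. The second need not be — so the honest resolution is: discard the nesting requirement and instead pick the $Z_i$ so that $\bigcap_i Z_i=\varnothing$, which is possible as long as $|Z|=k$ is large enough relative to the $|Z_i|=k-|B_i\cap J|\le k$; since we have $d+1$ sets each of size $\le k$ inside a ground set of size $k$ and $\sum_i(k-|Z_i|)$ need not exceed... this is the one genuinely fiddly combinatorial check.

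\textbf{The main obstacle}, then, is precisely arranging the outside-$J$ parts $Z_0,\dots,Z_d$ of the sets so that simultaneously (i) their union stays within a fixed $k$-set $Z$ (to keep $|A_0\cup\cdots\cup A_d|\le j+k<s$), and (ii) $\bigcap_i Z_i=\varnothing$ (so that the empty-intersection property, already present on $J$, is not spoiled). Condition (ii) is in fact automatic from (i)-type freedom: one just needs $Z$ to have at least $d+1$ elements and, for each element $z\in Z$, to have some $i$ with $z\notin Z_i$; since each $Z_i$ omits $|B_i\cap J|\ge 0$ elements of $Z$... if some $B_i\cap J$ is nonempty this is easy, and if \emph{all} $B_i\cap J$ equal $\varnothing$ then all $A_i\in\j$ forces $\varnothing\in\g$, hence \emph{every} $k$-set is in $\j$, and then one exhibits a cluster directly (three pairwise-disjoint-enough sets of total size $\le\frac{d+1}{d}k\le s$). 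Once these cases are dispatched the proof closes; I expect the write-up to spend most of its length on this bookkeeping, with $n\ge s$ used only to guarantee enough room outside $J$ for the set $Z$.
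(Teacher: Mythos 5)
There is a genuine gap, and it sits exactly at the point you yourself flagged as ``the one genuinely fiddly combinatorial check'': the resolution you propose for it is false. You want to place all the tails $Z_0,\ldots,Z_d$ inside a single $k$-set $Z\subseteq[n]\setminus J$, with $|Z_i|=k-|B_i\cap J|$, and arrange $\bigcap_i Z_i=\varnothing$. Each $Z_i$ omits exactly $|B_i\cap J|$ elements of $Z$, so every element of $Z$ can be missed by some $Z_i$ only if $\sum_i|B_i\cap J|\ge|Z|=k$; but $\sum_i|B_i\cap J|\le(d+1)j$, which is far below $k$ in the regime of interest (it can be as small as $d+1$, e.g.\ when the $B_i\cap J$ are distinct singletons). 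So ``if some $B_i\cap J$ is nonempty this is easy'' is not true, and no choice of the $Z_i$ works. More structurally, your target bound $|A_0\cup\cdots\cup A_d|\le j+k$ is unachievable: a double count (every element of the union $U$ is missed by at least one of the $d+1$ sets, while each $k$-set misses exactly $|U|-k$ elements of $U$) gives $(d+1)(|U|-k)\ge|U|$, i.e.\ $|U|\ge\frac{d+1}{d}k$ for \emph{any} $d+1$ $k$-sets with empty intersection, and this exceeds $j+k$ whenever $j<k/d$. This also shows that your remark that the hypothesis was ``designed'' to give only $s>k+j$ cannot be right: for $s<\frac{d+1}{d}k$ no $(d,k,s)$-cluster exists at all, so any argument using only $s>k+j$ would prove a false statement.

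The paper's proof avoids this by spending the full slack in $s$ rather than just $k+j$: it fixes $S\subseteq[n]\setminus J$ of size $s-j\ge\frac{d+1}{d}k$ and chooses tails $B_i'\in\binom{S}{k-|A_i\cap J|}$ with $\bigcap_i B_i'=\varnothing$, which is now possible --- partition $S$ into $d+1$ parts each of size at most $|S|-k$ (feasible precisely because $d|S|\ge(d+1)k$) and let the $i$-th tail avoid the $i$-th part. The union of the resulting sets lies in $J\cup S$, hence has size at most $s$, which is all the cluster definition demands. Your reduction to the $J$-side (that $\bigcap_i(B_i\cap J)=\varnothing$ comes for free), the trivial direction, and your treatment of the degenerate case where all $B_i\cap J=\varnothing$ are fine; it is the main construction that must be replaced as above.
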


\begin{proof}
Note that any $\left(d+1\right)$-wise intersecting family is free
of a $\left(d,k,s\right)$-cluster. So suppose on the contrary that
$\j$ is a $j$-junta that does not contains a $\left(d,k,s\right)$-cluster
and is not $\left(d+1\right)$-wise intersecting. Let $J$ be some
$j$-set and let $\g\subseteq\p\left(J\right)$ be a family, such
that a set $A$ is in $\j$ if and only if $A\cap J$ is in $\g.$
Let $A_{0},\ldots,A_{d}\in\j$ be some sets whose intersection is
empty, and let $S\subseteq\left[n\right]\backslash J$ be some set
of size $s-j.$ Since $\left|S\right|\ge\frac{d+1}{d}k$, it is easy
to see that there exists sets 
\[
B_{0}\in\binom{S}{k-\left|A_{0}\cap J\right|},B_{1}\in\binom{S}{k-\left|A_{1}\cap J\right|},\ldots,B_{d}\in\binom{S}{k-\left|A_{d}\cap J\right|},
\]
 such that $B_{0}\cap\cdots\cap B_{d}=\varnothing.$ Now the sets
$B_{0}\cup\left(A_{0}\cap J\right),\ldots,B_{d}\cup\left(A_{d}\cap J\right)$
form a $\left(d,k,s\right)$-cluster in $\j,$ contradicting the hypothesis
that $\j$ does not contain a $\left(d,k,s\right)$-cluster.
\end{proof}
Our goal will now be to prove that any family that does not contain
a $\left(d,k,\left(\frac{d+1}{d}+\zeta\right)k\right)$-cluster is
essentially contained in a $\left(d+1\right)$-wise intersecting junta.
Our first ingredient is the following regularity lemma of \cite{ellis2016stabilityfor}.

\subsection{The regularity lemma of \cite{ellis2016stabilityfor}}

A family $\f\subseteq\binom{\left[n\right]}{k}$ is said to be $\left(r,\epsilon\right)$-regular.
If $\left|\mu\left(\f_{J}^{B}\right)-\mu\left(\f\right)\right|\le\epsilon$
for any $J$ of size at most $r$ and any $B\subseteq J.$ 

As mentioned, every set $J$ decomposes $\f$ into the $2^{\left|J\right|}$
parts $\left\{ \f_{J}^{B}\right\} _{B\subseteq J}.$ The following
regularity lemma of \cite{ellis2016stabilityfor} allows us to find
a set $J$ that decomposes our family into some $\left(r,\epsilon\right)$-regular
parts and some `negligible' parts that together contribute very little
to the measure of $\f$. 
\begin{thm}[\cite{ellis2016stabilityfor} Theorem 1.7]
\label{thm:regularity} For each $\delta,\epsilon,\zeta>0$ there
exists $j\in\mathbb{N}$, such that the following holds. Let $\zeta n\le k\le\left(1-\zeta\right)n$
and let $\f\subseteq\binom{\left[n\right]}{k}$ be a family. Then
there exists a set $J$ of size $j$ and a family $\g\subseteq\p\left(J\right)$
such that the following holds. 
\begin{enumerate}
\item For each $B\in\j$, the family $\f_{J}^{B}$ is $\left(\left\lceil \frac{1}{\delta}\right\rceil ,\delta\right)$-regular
and $\mu\left(\f_{J}^{B}\right)>\frac{\epsilon}{2}$.
\item The family $\f$ is $\epsilon$-essentially contained in the $j$-junta
$\left\langle \g\right\rangle .$
\end{enumerate}
\end{thm}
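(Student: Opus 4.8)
The plan is to prove Theorem~\ref{thm:regularity} by an \emph{energy increment} argument, adapting the proof scheme of Szemer\'edi's regularity lemma to the junta setting. Abbreviate $r:=\lceil1/\delta\rceil$, let $\boldsymbol{A}\sim\binom{[n]}{k}$, and for $J\subseteq[n]$ define the \emph{energy}
\[
\mathcal{E}(J):=\mathbb{E}_{\boldsymbol{A}}\Big[\mu\big(\f_{J}^{\boldsymbol{A}\cap J}\big)^{2}\Big]=\sum_{B\subseteq J}\Pr[\boldsymbol{A}\cap J=B]\,\mu(\f_{J}^{B})^{2}.
\]
Since each $\mu(\f_{J}^{B})\in[0,1]$ we have $0\le\mathcal{E}(J)\le\mu(\f)\le1$. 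The quantity $\mu(\f_{J}^{\boldsymbol{A}\cap J})$ is the conditional expectation of $\mathbf{1}_{\boldsymbol{A}\in\f}$ given $\boldsymbol{A}\cap J$, so the tower property gives $\mu(\f_{J}^{B})=\mathbb{E}[\mu(\f_{J'}^{\boldsymbol{A}\cap J'})\mid\boldsymbol{A}\cap J=B]$ for $J\subseteq J'$ and hence, grouping the blocks of $J'$ over those of $J$,
\[
\mathcal{E}(J')-\mathcal{E}(J)=\mathbb{E}_{B}\Big[\operatorname{Var}\big(\mu(\f_{J'}^{\boldsymbol{A}\cap J'})\mid\boldsymbol{A}\cap J=B\big)\Big]\ge0 .
\]
Thus $\mathcal{E}$ is bounded and monotone nondecreasing under refinement, which will force the iteration below to terminate. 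I use the identity $(\f_{J}^{B})_{K}^{C}=\f_{J\cup K}^{B\cup C}$ freely throughout.

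The first key step is the \emph{increment lemma}: suppose $B_{0}\subseteq J$ is a block for which $\f_{J}^{B_{0}}$ is \emph{not} $(r,\delta)$-regular, witnessed by $J''\subseteq[n]\setminus J$ with $|J''|\le r$ and $B''\subseteq J''$ satisfying $\big|\mu\big((\f_{J}^{B_{0}})_{J''}^{B''}\big)-\mu(\f_{J}^{B_{0}})\big|>\delta$. Then refining $J$ by any set $K$ with $J''\subseteq K\subseteq[n]\setminus J$ raises the contribution of $B_{0}$ to the energy by more than $\Pr[\boldsymbol{A}\cap J=B_{0}]\cdot\Pr[\boldsymbol{A}\cap J''=B''\mid\boldsymbol{A}\cap J=B_{0}]\cdot\delta^{2}$. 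This is the defect form of Jensen's inequality---$\operatorname{Var}(X)\ge\Pr[E]\big(\mathbb{E}[X\mid E]-\mathbb{E}[X]\big)^{2}$ for any event $E$---applied to $X=\mu(\f_{J\cup K}^{B_{0}\cup(\boldsymbol{A}\cap K)})$ and $E=\{\boldsymbol{A}\cap J''=B''\}$ under the conditioning $\boldsymbol{A}\cap J=B_{0}$, together with the fact that this conditional variance only grows as $K$ is enlarged. The second ingredient---the sole place the density hypothesis $\zeta n\le k\le(1-\zeta)n$ is used---is the uniform bound $\Pr[\boldsymbol{A}\cap J''=B''\mid\boldsymbol{A}\cap J=B]\ge c_{0}:=(\zeta/3)^{r}$ for every $J$ of size at most the eventual $j$: conditionally, $\boldsymbol{A}$ is a uniform $(k-|B|)$-subset of $[n]\setminus J$, each element of $J''$ is included with probability $\tfrac{k-|B|}{n-|J|}\in(\zeta/2,1-\zeta/2)$, and over the bounded set $J''$ sampling without replacement is essentially independent.

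Now I run the iteration. Put $\epsilon':=\epsilon/2$ and $J_{0}:=\varnothing$. Given $J_{t}$: if $\sum\{\Pr[\boldsymbol{A}\cap J_{t}=B]:\f_{J_{t}}^{B}\text{ not }(r,\delta)\text{-regular}\}>\epsilon'$, pick a witness $J''_{B}$ for each such irregular block and set $J_{t+1}:=J_{t}\cup\bigcup_{B}J''_{B}$; otherwise stop and let $J:=J_{t}$. Applying the increment lemma in each irregular block (with $K=\bigcup_{B}J''_{B}$), and using that the regular blocks contribute nonnegatively, each step raises $\mathcal{E}$ by more than $c_{0}\delta^{2}\epsilon'$; since $\mathcal{E}\le1$ the process stops within $\lceil1/(c_{0}\delta^{2}\epsilon')\rceil$ steps, and as $|J_{t+1}|\le|J_{t}|+2^{|J_{t}|}r$ the terminal $J$ has size at most a (tower-type) constant $j=j(\zeta,\delta,\epsilon)$, which serves as the $j$ of the theorem. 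Finally set $\g:=\{B\subseteq J:\f_{J}^{B}\text{ is }(r,\delta)\text{-regular and }\mu(\f_{J}^{B})>\epsilon/2\}$, so that item~(1) holds by construction. For item~(2), a set $A\in\f$ lies outside $\langle\g\rangle$ only when $B:=A\cap J\notin\g$, i.e.\ $\f_{J}^{B}$ is irregular or $\mu(\f_{J}^{B})\le\epsilon/2$, and therefore
\[
\mu(\f\setminus\langle\g\rangle)\le\sum_{B:\,\f_{J}^{B}\text{ irregular}}\Pr[\boldsymbol{A}\cap J=B]+\frac{\epsilon}{2}\sum_{B\subseteq J}\Pr[\boldsymbol{A}\cap J=B]\le\epsilon'+\frac{\epsilon}{2}=\epsilon .
\]

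The main obstacle is not conceptual but structural. One must verify the conditional-measure bound $c_{0}$ carefully---this is where $\zeta$ enters, and it forces a mild lower bound $n\ge n_{0}(\zeta,\delta,\epsilon)$, which for the purposes of the statement is harmless since the bounded-$n$ range is degenerate. More importantly, one must avoid a circular dependence between the size $j$ of the junta and the stopping rule: this is sidestepped by thresholding the \emph{total} measure of irregular blocks against the \emph{fixed} quantity $\epsilon/2$, rather than demanding that every block above some probability threshold be regular; the only price is a tower-type bound on $j$, which is acceptable because the theorem asserts merely that such a $j$ exists.
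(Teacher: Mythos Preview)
The paper does not prove Theorem~\ref{thm:regularity}; it is quoted verbatim from \cite{ellis2016stabilityfor} and used as a black box. So there is no ``paper's own proof'' to compare against, only the proof in the cited source.

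Your energy-increment argument is correct and is essentially the standard proof of such regularity lemmas (and the one in \cite{ellis2016stabilityfor}). The key steps---monotonicity of the energy $\mathcal{E}(J)$ under refinement via the tower property, the defect-Jensen bound $\operatorname{Var}(X)\ge\Pr[E](\mathbb{E}[X\mid E]-\mathbb{E}[X])^{2}$ applied inside each irregular block, and the hypergeometric lower bound $\Pr[\boldsymbol{A}\cap J''=B''\mid\boldsymbol{A}\cap J=B]\ge(\zeta/3)^{r}$ coming from $\zeta n\le k\le(1-\zeta)n$---are all sound, and the termination and final $\epsilon$-containment bounds are handled cleanly. Your observation that the per-block increment survives enlarging $K$ to $\bigcup_{B}J''_{B}$ (because conditional variance is monotone under refinement) is exactly what is needed to run all irregular blocks in parallel.

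Two minor remarks. First, as you note, the hypergeometric bound requires $n\ge n_{0}(\zeta,\delta,\epsilon)$; for $n<n_{0}$ one can simply take $J=[n]$ (every slice is then trivially regular with measure $0$ or $1$), so the statement still holds with $j:=\max\{j_{\text{iter}},n_{0}\}$. Second, there is no genuine circularity between $j$ and $c_{0}$: you fix $c_{0}=(\zeta/3)^{r}$ first (it depends only on $\zeta,\delta$), which determines the number of rounds and hence $j$, and only then choose $n_{0}$ large in terms of $j$; you say this, and it is the right order of quantifiers.
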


\subsection{If $\protect\f$ is $\left(\left\lceil \frac{1}{\delta}\right\rceil ,\delta\right)$-regular
and $l\ge k\left(1+\zeta\right)$, then $\mu\left(\protect\f^{\uparrow l}\right)$
is close to 1 }

Let $\frac{k}{n}<\frac{l}{n}$ be some numbers that are bounded away
from $0,1$ and each other. In order to prove our approximation by
junta theorem, we will need to show that if $\f\subseteq\binom{\left[n\right]}{k}$
is a $\left(\left\lceil \frac{1}{\delta}\right\rceil ,\delta\right)$-regular
family, then either $\mu\left(\f\right)$ is close to 0 or $\mu\left(\f^{\uparrow l}\right)$
is close to 1. This lemma is in the spirit of Dinur and Friedgut \cite[Lemma 3.2]{dinur2009intersecting}. 
\begin{lem}
\label{lem: Follows from Friedgut-Junta Theorem} For each $\zeta>0,$
there exists $\delta>0$ such that the following holds. Let $\zeta<\frac{l}{n}<1-\zeta$,
let $\frac{k}{n}<\frac{l}{n}-\frac{\zeta}{2}$, and suppose that $\f\subseteq\binom{\left[n\right]}{k}$
is a $\left(\left\lceil \frac{1}{\delta}\right\rceil ,\delta\right)$-regular
family. Then either $\mu\left(\f\right)<\epsilon$ or $\mu\left(\f^{\uparrow l}\right)>1-\epsilon.$ 
\end{lem}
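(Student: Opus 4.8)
The statement is a robustness/regularity claim: a $(\lceil 1/\delta\rceil,\delta)$-regular family of density bounded away from $0$ must, after a relatively small expansion of the ground set's "local view", grow to almost full density. The natural route is to reduce to the $p$-biased setting and then apply a junta-approximation theorem for monotone families on the biased cube, in the spirit of Dinur--Friedgut. First I would pass to the monotone closure: $\mu(\f^{\uparrow l})$ is a monotone quantity, and for a monotone family $\mathcal{A}$ the quantities $\mu_p(\mathcal A)$ interpolate nicely between the slice measures $\mu(\mathcal A\cap\binom{[n]}{k})$ for $k\approx pn$ (by a standard coupling / concentration argument, $\mu_p(\mathcal A)$ is sandwiched by $\mu(\mathcal A\cap\binom{[n]}{k'})$ for $k'$ slightly below and above $pn$). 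So it suffices to prove: if $\f$ is $(\lceil1/\delta\rceil,\delta)$-regular with $\mu(\f)\ge\epsilon$, then $\mu_p(\f^{\uparrow})$ is close to $1$ for $p$ with $\frac{k}{n}+\frac{\zeta}{4}<p<\frac{l}{n}$, say.

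\textbf{Key steps.} (1) Translate regularity of $\f$ on the $k$-slice into a statement on the biased cube for $\f^{\uparrow}$: because $\f$ is $(r,\delta)$-regular with $r=\lceil1/\delta\rceil$ large, the slice measures $\mu(\f_J^B)$ are all within $\delta$ of $\mu(\f)$, and one checks that this forces the monotone closure $\f^{\uparrow}$ to have all of its "restricted" biased measures $\mu_p((\f^{\uparrow})_J^B)$ close to $\mu_p(\f^{\uparrow})$ as well — i.e. $\f^{\uparrow}$ is also regular, now on the cube, at scale $p$. (2) Invoke a dichotomy for regular monotone families on the biased cube: a monotone family that is sufficiently regular (no small junta captures a nonnegligible fraction of the "influence") and has density bounded below must have density tending to $1$ once $p$ crosses a fixed threshold above its "critical probability". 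Concretely, if $\mu_{p_0}(\f^{\uparrow})\ge\epsilon$ for $p_0\ge \frac{k}{n}$, then by a Margulis--Russo / sharp-threshold argument (Friedgut's junta theorem, or the Bourgain--Kalai sharp threshold theorem for regular families) the density jumps to $1-o(1)$ by the time $p$ increases by a constant; the regularity hypothesis is exactly what rules out the family being essentially a junta (e.g. a subcube), which is the only obstruction to a sharp threshold. (3) Translate back to the $l$-slice: since $l/n > p > k/n$ with all gaps $\ge\zeta/4$, we get $\mu(\f^{\uparrow l})\ge\mu_p(\f^{\uparrow})-o(1)>1-\epsilon$. The parameter $\delta=\delta(\zeta,\epsilon)$ is chosen at the end so that the regularity scale $\lceil1/\delta\rceil$ is large enough to both (a) make the biased-cube regularity strong enough for the sharp-threshold input and (b) absorb the coupling errors between slices and the cube, all of which are controlled by $\zeta$.

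\textbf{Alternative, more self-contained route.} Rather than quoting a sharp-threshold theorem, one can argue directly with a "tensorization/boosting" step: split the ground set into $[n]=X_1\sqcup X_2$ with $|X_1|$ a small constant fraction, expose the $X_2$-part first to get (by regularity restricted to coordinates in $X_2$, which are few enough to lie within the regularity window) a family on $X_1$ that still has density $\ge\epsilon/2$ with high probability, and iterate: each round of exposing a constant fraction of fresh coordinates multiplies the "failure" probability (the chance the remaining family is still not monotone-trivial) down, using that a set of density bounded below, once we add a $\zeta'$-fraction of random new elements, is very likely to already contain a member of $\f$. After boundedly many rounds the accumulated new elements amount to an $l-k$ excess and $\f^{\uparrow l}$ has density $1-\epsilon$. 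The technical content here is a "local LP" / supersaturation estimate: a single $(r,\delta)$-regular part behaves, for the purposes of the union bound over which element completes a set of $\f$, like a product measure, and this is precisely where $r=\lceil1/\delta\rceil$ is used.

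\textbf{Main obstacle.} The crux is Step (2): making the sharp-threshold / junta dichotomy quantitatively usable with the \emph{specific} regularity notion of $(\lceil1/\delta\rceil,\delta)$-regularity that the paper's regularity lemma (Theorem~\ref{thm:regularity}) outputs, and doing so uniformly for $k/n,l/n$ in the dense regime $(\zeta,1-\zeta)$. One must verify that this combinatorial regularity implies the analytic hypothesis (small low-degree influences, or "no subcube of bounded dimension captures a constant fraction of the measure") needed to conclude the threshold is sharp — and that the threshold location is controlled so that an \emph{additive} $\zeta/2$ increase in $p$ (equivalently $l\ge k(1+\zeta)$) is enough to move from density $\ge\epsilon$ to density $\ge 1-\epsilon$. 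The expansion $l\ge k(1+\zeta)$, i.e. a \emph{multiplicative} gap, is what makes the $p$-gap at least $\zeta k/n\ge\zeta^2$, a genuine constant, which is exactly what a constant-window sharp threshold needs; if only an additive $o(n)$ gap were available the statement would fail, so this hypothesis cannot be relaxed and the proof must use it essentially.
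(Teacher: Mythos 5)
Your overall strategy (pass to the monotone closure, work with the $p$-biased measure, and combine Margulis--Russo with Friedgut's junta theorem, with regularity serving to exclude junta behaviour) is the same route the paper takes, but the step you yourself flag as the ``main obstacle'' --- your Step (2) --- is precisely the content of the proof, and you leave it unproved. Saying that ``regularity rules out the family being essentially a junta, which is the only obstruction to a sharp threshold'' is not an argument: neither Friedgut's theorem nor Bourgain--Kalai takes the paper's combinatorial notion of $(\lceil 1/\delta\rceil,\delta)$-regularity of a $k$-slice family as a hypothesis, so the bridge between the two notions is exactly what has to be supplied. Moreover your Step (1), asserting that slice-regularity of $\f$ forces the restricted biased measures $\mu_p\bigl((\f^{\uparrow})_J^B\bigr)$ to be close to $\mu_p(\f^{\uparrow})$, is unsubstantiated --- regularity pins down only the density of each $\f_J^B$ at level $k$, not the $p$-biased density of its up-closure --- and it is in fact not needed.

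The paper closes the gap by contradiction, and the decisive move is how the junta output by Friedgut's theorem talks to the regularity hypothesis. Assuming $\mu(\f)\ge\epsilon$ and $\mu(\f^{\uparrow l})\le 1-\epsilon$, Lemma \ref{lem:trivial kk} plus a Chernoff bound give $\mu_p(\f^{\uparrow})\le 1-\frac{\epsilon}{2}$ for all $p\le\frac{k+l}{2n}$, so by the mean value theorem there is $q\in\bigl(\frac{k}{n}+\frac{\zeta}{5},\frac{k}{n}+\frac{2\zeta}{5}\bigr)$ with $\frac{d\mu_q(\f^{\uparrow})}{dq}\le\frac{5}{\zeta}$, and Friedgut's theorem yields a bounded set $J$ and $\g\subseteq\p(J)$ with $\mu_q\left(\f^{\uparrow}\Delta\langle\g\rangle\right)<\frac{\epsilon^2}{16}$. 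Then: by monotonicity, if $\mu_q\bigl((\f^{\uparrow})_J^{\varnothing}\bigr)\ge\frac{\epsilon}{4}$, every restriction $(\f^{\uparrow})_J^{B}$ with $B\notin\g$ has $q$-biased measure at least $\frac{\epsilon}{4}$, forcing $\mu_q\left(\f^{\uparrow}\setminus\langle\g\rangle\right)>\frac{\epsilon^2}{16}$, a contradiction; hence $\mu_q\bigl((\f^{\uparrow})_J^{\varnothing}\bigr)<\frac{\epsilon}{4}$, and since $q>\frac{k}{n}$ this converts to the slice bound $\mu(\f_J^{\varnothing})\le 2\mu_q\bigl((\f_J^{\varnothing})^{\uparrow}\bigr)\le\frac{\epsilon}{2}$. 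On the other hand $|J|\le\lceil 1/\delta\rceil$, so $(\lceil 1/\delta\rceil,\delta)$-regularity gives $\mu(\f_J^{\varnothing})\ge\mu(\f)-\delta>\frac{\epsilon}{2}$ --- contradiction. Thus regularity is used only once, at the slice level, through the single restriction $\f_J^{\varnothing}$ determined by Friedgut's junta; no sharp-threshold theorem for ``regular'' families and no transfer of regularity to the cube is required. Your alternative ``boosting'' route is too vague to assess, and its key supersaturation claim (that a regular part behaves like a product measure for the relevant union bound) is again an unproved assertion of the same order of difficulty as the lemma itself.
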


One of the main tools is the following well known corollary of Friedgut's
Junta Theorem \cite{friedgut1998boolean} and Russo's Lemma \cite{russo1982approximate}.
\begin{thm}[Friedgut's junta Theorem for monotone families]
 For each $\epsilon,\zeta,C>0$ there exists $j\in\mathbb{N}$, such
that the following holds. Let $\f\subseteq\p\left(\left[n\right]\right)$
be a monotone family, let $p\in\left(\zeta,1-\zeta\right)$, and suppose
that $\frac{d\mu_{p}\left(\f\right)}{dp}\le C.$ The there exists
a $j$-junta $\mathcal{J}$, such that $\mu_{p}\left(\f\Delta\mathcal{J}\right)<\epsilon.$
\end{thm}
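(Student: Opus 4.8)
The plan is to pass to the monotone closure $\g:=\f^{\uparrow}\subseteq\p([n])$ and argue that regularity of $\f$ forces $\g$ to have a \emph{sharp} threshold just above $p=k/n$; then $\mu_{l/n}(\g)$ — and hence $\mu(\f^{\uparrow l})=|\g\cap\binom{[n]}{l}|/\binom{n}{l}$ — must be close to $1$ unless $\mu(\f)=|\g\cap\binom{[n]}{k}|/\binom{n}{k}$ is already small. Write $\mu^{(m)}(\g):=|\g\cap\binom{[n]}{m}|/\binom{n}{m}$. By Lemma~\ref{lem:trivial kk} (applied level by level, using monotonicity of $\g$), $m\mapsto\mu^{(m)}(\g)$ is non-decreasing, and $\mu_p(\g)=\mathbb{E}_{\boldsymbol m\sim\mathrm{Bin}(n,p)}\bigl[\mu^{(\boldsymbol m)}(\g)\bigr]$. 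Since $\mathrm{Bin}(n,p)$ concentrates within $O(\sqrt n)$ of its mean, this yields two elementary transfers: $\mu(\f)\ge\epsilon$ implies $\mu_{k/n}(\g)\ge c_{0}\epsilon$ for a universal constant $c_{0}>0$ (as $\Pr[\mathrm{Bin}(n,k/n)\ge k]\ge c_{0}$); and $\mu(\f^{\uparrow l})<1-\epsilon$ implies $\mu_{l/n-\zeta/4}(\g)<1-\epsilon/2$ once $n$ is large (the binomial $\mathrm{Bin}(n,l/n-\zeta/4)$ exceeds $l$ with probability $e^{-\Omega(\zeta^{2}n)}$). So it suffices to derive a contradiction from the conjunction of $\mu(\f)\ge\epsilon$ and $\mu(\f^{\uparrow l})<1-\epsilon$, which we now assume.

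\textbf{Locating a coarse threshold and invoking the junta theorem.} Since $p\mapsto\mu_p(\g)$ is non-decreasing, on the interval $I:=\bigl[\max(k/n,\zeta/4),\,l/n-\zeta/4\bigr]$, which has length $\ge\zeta/4$ and is bounded away from $0$ and $1$ in terms of $\zeta$, we have $\mu_p(\g)\in[c_{0}\epsilon,1-\epsilon/2]$ throughout. As $p\mapsto\mu_p(\g)$ is a polynomial, the mean value theorem gives $p^{*}\in I$ with $\frac{d}{dp}\mu_p(\g)\bigl|_{p^{*}}\le\frac{1}{\zeta/4}=4/\zeta$; equivalently, by Russo's Lemma, $\sum_i\mathrm{Inf}_i^{p^{*}}(\g)\le4/\zeta$. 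Thus $\g$ is monotone, has bounded total $p^{*}$-influence, and $\mu_{p^{*}}(\g)$ is bounded away from $0$ and $1$ — a coarse threshold at $p^{*}$, with $p^{*}\ge k/n$. Applying Friedgut's junta theorem for monotone families at $p^{*}$ (with influence bound $4/\zeta$, margin $\zeta/8$, and an error $\epsilon''$ to be fixed as a function of $\zeta$ and $\epsilon$) produces a $j$-junta $\j_{0}=\langle\g_{0}\rangle$ on a coordinate set $J$, with $j=j(\epsilon'',\zeta)$ a constant and $\mu_{p^{*}}(\g\triangle\j_{0})<\epsilon''$; we may take $\j_{0}$ (hence $\g_{0}$) monotone, and since $c_{0}\epsilon\le\mu_{p^{*}}(\g)\le1-\epsilon/2$, for $\epsilon''$ small $\j_{0}$ is non-trivial, so $\varnothing\notin\g_{0}$ and $\g_{0}$ has minimal members of size between $1$ and $j$.

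\textbf{Using regularity to destroy the junta.} This is where the hypothesis that $\f$ is $(\lceil1/\delta\rceil,\delta)$-regular is used, in the manner of Dinur--Friedgut's Lemma~3.2: since $j$ (and, below, the total number of coordinates handled) depends only on $\zeta,\epsilon$, we are free to choose $\delta$ (hence $r=\lceil1/\delta\rceil$) small enough. Regularity means every restriction $\f_{J'}^{B}$ with $|J'|\le r$ has density within $\delta$ of $\mu(\f)$ and is itself essentially $(r-|J'|,\delta)$-regular, so $\f$ ``looks the same'' from every bounded coordinate set, and consequently $\f^{\uparrow}$ cannot be organised around a bounded junta consistently with the coarse threshold found above. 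Concretely, I would iterate: at each round, use the (non-trivial monotone) junta together with a sharp-threshold step to identify a \emph{bounded}-size coordinate set carrying the coarseness, restrict $\f$ by forcing those coordinates to be absent — this preserves regularity, changes the density by only $O(\delta)$, and, crucially, the largeness of the restricted family's upper closure lifts back cleanly to $\mu(\f^{\uparrow l})$ — and show that a bounded-complexity measure (essentially the total $p^{*}$-influence of the restricted upper closure) strictly decreases. After a bounded number of rounds one reaches a regular family $\f'$ of density $\ge\epsilon-o(1)$ whose monotone closure $\g'$ must have $\mu_{p^{*}}(\g')$ near $0$ or near $1$. The first alternative, combined with $p^{*}\ge k/n\ge k'/n$ and the transfer of the first paragraph, forces $\mu(\f')=\mu^{(k')}(\g')<\epsilon-o(1)$, a contradiction; the second, via the same transfer and the clean lift-back, gives $\mu(\f^{\uparrow l})\ge1-\epsilon$, again a contradiction. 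This completes the proof.

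\textbf{The main obstacle.} Everything up to and including the application of the junta theorem is routine; the genuine difficulty is the last step. The naive move — conditioning on the full value of $J$ — loses a factor exponential in $|J|$, hence in $1/\epsilon''$, and thus overwhelms the small approximation error $\epsilon''$; the argument must instead extract at each round a structure whose size is bounded purely in terms of $\zeta$ and $\epsilon$, and one must check that the iteration terminates in a bounded number of rounds (so that $r=\lceil1/\delta\rceil$ can be fixed in advance) with the stated $0/1$ dichotomy holding with the correct quantitative dependence. Getting this interaction between the $p^{*}$-biased junta approximation and the uniform regularity of $\f$ to close is exactly the technical heart of the junta method as developed by Dinur--Friedgut and Ellis--Keller--Lifshitz, and is where I expect essentially all the work to lie.
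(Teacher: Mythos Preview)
Your proposal does not address the stated theorem at all. Friedgut's junta theorem for monotone families is not proved in the paper; it is quoted as a ``well known corollary of Friedgut's Junta Theorem and Russo's Lemma'' and used as a black box. Your write-up is instead an attempted proof of Lemma~\ref{lem: Follows from Friedgut-Junta Theorem} (the statement that a $(\lceil 1/\delta\rceil,\delta)$-regular family has either $\mu(\f)<\epsilon$ or $\mu(\f^{\uparrow l})>1-\epsilon$), and you even \emph{invoke} Friedgut's junta theorem in the middle of it. So as a proof of the statement you were asked about, this is circular.

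Taking your proposal as an attempt at Lemma~\ref{lem: Follows from Friedgut-Junta Theorem}, the first two paragraphs match the paper's argument closely (transfer between slice and biased measure, mean value to find $q$ with bounded derivative, then Friedgut's theorem to get a junta on a set $J$). But the ``main obstacle'' you identify is illusory, and the iterative scheme you sketch is not needed. The paper finishes in one stroke: by monotonicity of $\f^{\uparrow}$ one has $\mu_q\bigl((\f^{\uparrow})_J^{B}\bigr)\ge\mu_q\bigl((\f^{\uparrow})_J^{\varnothing}\bigr)$ for every $B\subseteq J$, so
\[
\mu_q\bigl(\f^{\uparrow}\setminus\langle\g\rangle\bigr)=\sum_{B\notin\g}q^{|B|}(1-q)^{|J|-|B|}\mu_q\bigl((\f^{\uparrow})_J^{B}\bigr)\ge\mu_q\bigl((\f^{\uparrow})_J^{\varnothing}\bigr)\cdot\bigl(1-\mu_q(\langle\g\rangle)\bigr),
\]
and since $\mu_q(\f^{\uparrow})\le 1-\epsilon/2$ the last factor is at least $\epsilon/4$. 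With junta error $\epsilon^2/16$ this forces $\mu_q\bigl((\f^{\uparrow})_J^{\varnothing}\bigr)<\epsilon/4$, hence $\mu(\f_J^{\varnothing})<\epsilon/2$ after transferring back to the slice. A \emph{single} application of regularity (to the one restriction $B=\varnothing$, with $|J|\le\lceil 1/\delta\rceil$) then gives $\mu(\f)\le\mu(\f_J^{\varnothing})+\delta<\epsilon$. There is no exponential-in-$|J|$ loss because you never condition on all $2^{|J|}$ patterns; monotonicity lets the all-zeros restriction stand in for every $B\notin\g$ simultaneously.
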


\begin{proof}[Proof of Lemma \ref{lem: Follows from Friedgut-Junta Theorem}]
 Note that we may assume that $n$ is sufficiently large by decreasing
$\delta$ if necessary. By Lemma \ref{lem:trivial kk}, we have $\mu\left(\f^{\uparrow r}\right)\le\mu\left(\f^{\uparrow l}\right)$
for any $r\le l.$ Hence, for any $p\le\frac{k+l}{2n}$ we have 
\begin{align*}
\mu_{p}\left(\f^{\uparrow}\right) & =\sum_{r=0}^{n}p^{r}\left(1-p\right)^{n-r}\binom{n}{r}\mu\left(\f^{\uparrow r}\right)\\
 & \le\sum_{r=0}^{l}p^{r}\left(1-p\right)^{n-r}\binom{n}{r}\mu\left(\f^{\uparrow l}\right)+\Pr_{r\sim\mathrm{Bin}\left(n,p\right)}\left[r\ge l\right]\\
 & \le\mu\left(\f^{\uparrow l}\right)+\Pr_{r\sim\mathrm{Bin}\left(n,p\right)}\left[r\ge l\right].
\end{align*}
 Suppose on the contrary that $\mu\left(\f^{\uparrow l}\right)\ge1-\epsilon.$
A simple Chernoff bound implies that $\Pr_{r\sim\mathrm{Bin}\left(n,p\right)}\left[r\ge l\right]<\frac{\epsilon}{2}$,
provided that $n\ge n_{0}\left(\zeta\right).$ Thus, 
\[
\mu_{p}\left(\f^{\uparrow}\right)\le1-\epsilon+\frac{\epsilon}{2}=1-\frac{\epsilon}{2}
\]
 for any $p\le\frac{k+l}{2n}$. The Mean Value Inequality imply that
there exists $q\in(\frac{k}{n}+\frac{\zeta}{5},\frac{k}{n}+\frac{2\zeta}{5}),$
such that 
\[
\frac{d\mu_{q}\left(\f^{\uparrow}\right)}{dq}\le\frac{\mu_{\frac{k}{n}+\frac{2\zeta}{5}}(\f^{\uparrow})-\mu_{\frac{k}{n}+\frac{\zeta}{5}}(\f^{\uparrow})}{\left(\zeta/5\right)}\le\frac{5}{\zeta},
\]
By Friedgut's junta Theorem, there exists a set $J\subseteq\left[n\right]$
with $|J|=O_{\zeta}(1)$ and a family $\mathcal{G}\subseteq\mathcal{P}(J)$,
such that $\mu_{q}\left(\f^{\uparrow}\Delta\left\langle \g\right\rangle \right)<\frac{\epsilon^{2}}{16}$. 
\begin{claim}
$\mu_{q}\left((\f^{\uparrow})_{J}^{\varnothing}\right)<\frac{\epsilon}{4}.$ 
\end{claim}

\begin{proof}
Note that the the fact that $\f^{\uparrow}$ is monotone implies that
$\mu_{q}\left(\left(\f^{\uparrow}\right)_{J}^{B}\right)\ge\mu_{q}\left(\left(\f^{\uparrow}\right)_{J}^{\varnothing}\right).$
Suppose for a contradiction that $\mu_{q}\left((\f^{\uparrow})_{J}^{\varnothing}\right)\ge\frac{\epsilon}{4}$.
Then 
\begin{align*}
\mu_{q}\left(\f^{\uparrow}\backslash\langle\g\rangle\right) & =\sum_{B\notin\g}q^{\left|B\right|}\left(1-q\right)^{\left|J\right|-\left|B\right|}\mu_{q}\left((\f^{\uparrow})_{J}^{B}\right)\\
 & \ge\sum_{B\notin\g}q^{\left|B\right|}\left(1-q\right)^{\left|J\right|-\left|B\right|}\frac{\epsilon}{4}\ge\frac{\epsilon}{4}\left(1-\mu_{q}\left(\langle\g\rangle\right)\right)\\
 & \ge\frac{\epsilon}{4}\left(1-\mu_{q}\left(\f^{\uparrow}\right)-\mu_{q}\left(\langle\g\rangle\backslash(\f^{\uparrow})\right)\right)\\
 & \ge\frac{\epsilon}{4}\left(\frac{\epsilon}{2}-\frac{\epsilon^{2}}{16}\right)>\frac{\epsilon^{2}}{16}
\end{align*}
a contradiction.
\end{proof}
Note that $\left(\f_{J}^{\varnothing}\right)^{\uparrow}=\left(\f^{\uparrow}\right)_{J}^{\varnothing}$.
Since $\mu\left(\left(\f_{J}^{\varnothing}\right)^{\uparrow r}\right)\ge\mu\left(\left(\f_{J}^{\varnothing}\right)\right)$
for any $r\ge k$, we have 

\begin{align*}
\mu_{q}\left(\left(\f_{J}^{\varnothing}\right)^{\uparrow}\right) & =\mathbb{E}_{\boldsymbol{r}\sim\mathrm{Bin}\left(n,q\right)}\left[\Pr_{\boldsymbol{A}\sim\mu_{p}}\left[\boldsymbol{A}\in\left(\f_{J}^{\varnothing}\right)^{\uparrow}|\,\left|\boldsymbol{A}\right|=\boldsymbol{r}\right]\right]\\
 & =\mathbb{E}_{\boldsymbol{r}\sim\mathrm{Bin}\left(n,q\right)}\left[\mu\left(\left(\f_{J}^{\varnothing}\right)^{\uparrow\boldsymbol{r}}\right)\right]\\
 & \ge\mathbb{E}_{\boldsymbol{r}\sim\mathrm{Bin}\left(n,q\right)}\left[\mu\left(\left(\f_{J}^{\varnothing}\right)\right)\right]\Pr_{\boldsymbol{r}\sim\mathrm{Bin}\left(n,q\right)}\left[r\ge k\right].\\
 & \ge\mu\left(\f_{J}^{\varnothing}\right)\frac{1}{2},
\end{align*}
 provided that $n$ is sufficiently large. Rearranging and using the
claim, we obtain 
\[
\mu\left(\f_{J}^{\varnothing}\right)\le2\mu_{q}\left(\left(\f_{J}^{\varnothing}\right)^{\uparrow}\right)\le\frac{\epsilon}{2}.
\]
 However, if $\delta$ is sufficiently small to satisfy $\left|J\right|<\left\lceil \frac{1}{\delta}\right\rceil $
and $\text{\ensuremath{\delta}}<\frac{\epsilon}{2}$, then we obtain
\[
\mu\left(\f_{J}^{\varnothing}\right)\ge\mu\left(\f\right)-\delta>\frac{\epsilon}{2},
\]
 a contradiction.
\end{proof}

\subsection{Showing that every family that does not contain a $\left(d,k,\left(\frac{d+1}{d}+\zeta\right)k\right)$-cluster
is essentially contained in a $\left(d+1\right)$-wise intersecting
$j$-junta }
\begin{prop}
\label{prop:Junta approximation theorem } For each $\epsilon,\zeta>0$
there exists a $j>0,n_{0}\in\mathbb{N}$, such that the following
holds. Let $n>n_{0}$, let $\zeta n<k<\left(\frac{d}{d+1}-\zeta\right)n$,
and let $\f\subseteq\binom{\left[n\right]}{k}$ be some family that
does not contain a $\left(d,k,\left(\frac{d+1}{d}+\zeta\right)k\right)$-cluster.
Then $\f$ is $\epsilon$-essentially contained in a $\left(d+1\right)$-wise
intersecting $j$-junta.
\end{prop}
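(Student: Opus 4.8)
The plan is to combine the regularity lemma (Theorem~\ref{thm:regularity}) with the blow-up argument sketched in the introduction. Fix once and for all a constant $\zeta'>0$ that is small in terms of $\zeta$ and $d$ (the precise smallness conditions, all of the form ``$\zeta'<$ positive constant depending on $\zeta,d$'', are collected below), put $l=\lceil(1+\zeta')k\rceil$ and $\epsilon'=\frac{1}{2(d+1)}$, and let $\delta_0>0$ be the constant furnished by Lemma~\ref{lem: Follows from Friedgut-Junta Theorem} when its error parameter is $\min(\epsilon/2,\epsilon')$ and its ``$\zeta$'' is a suitable constant depending only on $\zeta,d$. Apply Theorem~\ref{thm:regularity} with error parameter $\epsilon$, with ``$\zeta$'' the obvious constant, and with regularity parameter $\delta\le\delta_0$ small; it produces a set $J$ of constant size $j$ and a family $\g\subseteq\p(J)$ such that $\f$ is $\epsilon$-essentially contained in the $j$-junta $\j:=\langle\g\rangle$, while for every $B\in\g$ the slice $\f_J^B\subseteq\binom{[n]\setminus J}{k-|B|}$ is $(\lceil 1/\delta\rceil,\delta)$-regular with $\mu(\f_J^B)>\frac{\epsilon}{2}$. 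Since $\j$ is a $j$-junta and $\f$ is $\epsilon$-essentially contained in it, this $j$ will do for the proposition, and it remains only to show that $\j$ is $(d+1)$-wise intersecting.

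Suppose not. Because $\j=\langle\g\rangle$, taking the traces on $J$ of $d+1$ sets of $\j$ with empty intersection produces $B_0,\dots,B_d\in\g$ (not necessarily distinct) with $B_0\cap\cdots\cap B_d=\varnothing$. Write $\f_i:=\f_J^{B_i}$, a $(\lceil 1/\delta\rceil,\delta)$-regular family on the ground set $[n]\setminus J$ with $\mu(\f_i)>\frac{\epsilon}{2}$. By the choice of $\delta$, Lemma~\ref{lem: Follows from Friedgut-Junta Theorem} applies to each $\f_i$ inside $\binom{[n]\setminus J}{l}$ --- its numerical hypotheses hold for $n$ large because $\zeta n<k<\frac{d}{d+1}n$, $l=(1+\zeta')k$ and $|B_i|\le j=O(1)$ --- and, the alternative $\mu(\f_i)<\min(\epsilon/2,\epsilon')$ being excluded, yields $\mu(\f_i^{\uparrow l})>1-\epsilon'$, where the monotone closure is taken inside $[n]\setminus J$. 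Now set $s=\lceil\frac{d+1}{d}l\rceil+d$; since $\frac{d+1}{d}(1+\zeta')k<n-j$ for $\zeta'$ small and $n$ large, we may fix pairwise distinct $l$-sets $D_0,\dots,D_d$ inside an $s$-subset $T$ of $[n]\setminus J$ with $D_0\cap\cdots\cap D_d=\varnothing$ and $|D_i\cap D_j|\le\frac{d-1}{d}l$ for $i\ne j$ (take the $D_i$ to be the complements in $T$ of the parts of a near-equitable near-partition of $T$ into $d+1$ parts). For a uniformly random permutation $\boldsymbol{\pi}$ of $[n]\setminus J$, each $\boldsymbol{\pi}(D_i)$ is a uniformly random $l$-subset of $[n]\setminus J$, so $\Pr[\boldsymbol{\pi}(D_i)\notin\f_i^{\uparrow l}]<\epsilon'$; a union bound over $i=0,\dots,d$ and the choice $\epsilon'<\frac{1}{d+1}$ give a permutation $\pi$ with $C_i:=\pi(D_i)\in\f_i^{\uparrow l}$ for all $i$, while still $C_0\cap\cdots\cap C_d=\varnothing$, $|C_0\cup\cdots\cup C_d|\le s$ and $|C_i\cap C_j|\le\frac{d-1}{d}l$.

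Descending back to level $k$, for each $i$ choose $A_i'\subseteq C_i$ with $A_i'\in\f_i$ and $|A_i'|=k-|B_i|$, and let $A_i:=B_i\cup A_i'$; then $A_i\cap J=B_i$ and $A_i\setminus J=A_i'\in\f_J^{B_i}$, so $A_i\in\f$. Splitting the intersection according to $J$ and $[n]\setminus J$, $A_0\cap\cdots\cap A_d=(B_0\cap\cdots\cap B_d)\sqcup(A_0'\cap\cdots\cap A_d')\subseteq\varnothing\cup(C_0\cap\cdots\cap C_d)=\varnothing$, while $|A_0\cup\cdots\cup A_d|\le|J|+|C_0\cup\cdots\cup C_d|\le j+s\le\frac{d+1}{d}(1+\zeta')k+O(1)\le\bigl(\frac{d+1}{d}+\zeta\bigr)k$ once $\zeta'<\frac{d}{2(d+1)}\zeta$ and $n$ is large (here the $O(1)$ depends only on $\zeta,\epsilon,d$, hence is negligible against $k\ge\zeta n$). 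The $A_i$ are pairwise distinct: if $B_i\ne B_j$ then $|A_i'|\ne|A_j'|$, and if $B_i=B_j$ then $|C_i\cap C_j|\le\frac{d-1}{d}(1+\zeta')k<k-|B_i|$ for $\zeta'$ small and $n$ large, so the two $(k-|B_i|)$-sets $A_i'\subseteq C_i$ and $A_j'\subseteq C_j$ cannot coincide. Therefore $\{A_0,\dots,A_d\}$ is a $\bigl(d,k,(\frac{d+1}{d}+\zeta)k\bigr)$-cluster contained in $\f$, contradicting the hypothesis; hence $\j$ is $(d+1)$-wise intersecting, and the proposition follows.

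The conceptual core --- and the reason both Theorem~\ref{thm:regularity} and Lemma~\ref{lem: Follows from Friedgut-Junta Theorem} are needed together --- is guaranteeing that the cluster we manufacture lives inside $\f$ and not merely inside the junta $\j$: this is precisely why one must use the full strength of the regularity lemma, namely that the slices $\f_J^B$ for $B\in\g$ are simultaneously $(\lceil 1/\delta\rceil,\delta)$-regular and of measure $>\epsilon/2$, so that Lemma~\ref{lem: Follows from Friedgut-Junta Theorem} can be invoked on the individual slices and their blow-ups $\f_i^{\uparrow l}$ fill up almost all of $\binom{[n]\setminus J}{l}$. Everything else is bookkeeping: choosing $\zeta'$, $\epsilon'$, $\delta$ and the hidden constants consistently, and verifying the elementary inequalities among $s$, $l$, $n-j$ and $|C_i\cap C_j|$. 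I expect the only mildly fiddly point to be checking that all the upper bounds demanded of $\zeta'$ --- from the size of $T$ fitting inside $[n]\setminus J$, from the cluster-union bound $(\frac{d+1}{d}+\zeta)k$, and from distinctness --- are simultaneously satisfiable; since there are finitely many and each is of the form ``$\zeta'<$ positive constant depending on $\zeta,d$'', they are.
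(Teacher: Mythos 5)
Your proposal is correct and takes essentially the same route as the paper's proof: apply Theorem~\ref{thm:regularity} to get the junta and the regular slices of measure $>\epsilon/2$, use Lemma~\ref{lem: Follows from Friedgut-Junta Theorem} to make each slice's $l$-level up-closure have measure close to $1$, and then a union bound over a randomly placed $\left(d,l,\approx\frac{d+1}{d}l\right)$-cluster (your random permutation of a fixed template is just another way of realizing the paper's uniformly random cluster, whose members are likewise marginally uniform) produces, after descending to level $k$, a $\left(d,k,\left(\frac{d+1}{d}+\zeta\right)k\right)$-cluster in $\f$, a contradiction. Your additional verifications of the empty intersection and of the distinctness of the $A_i$ only spell out details the paper leaves implicit.
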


\begin{proof}
Let $\delta=\delta\left(\epsilon,\zeta,d\right)$ be sufficiently
small, and choose $j=j\left(\delta,\epsilon,\zeta,d\right),n_{0}=n_{0}\left(j,\zeta\right)$
sufficiently large. By Theorem \ref{thm:regularity} there exists
a set $J$ of size $j$ and a family $\g\subseteq\p\left(J\right)$,
such that $\f$ is $\epsilon$-essentially contained in $\left\langle \g\right\rangle ,$
and such that for each $B\in\g$ the family $\f_{J}^{B}$ is $\left(\left\lceil \frac{1}{\delta}\right\rceil ,\delta\right)$-regular
and has measure $\ge\frac{\epsilon}{2}$. If the junta $\left\langle \g\right\rangle $
is $\left(d+1\right)$-wise intersecting, then we are done. Otherwise,
there exist sets $B_{0},\ldots,B_{d}\in\g$ whose intersection is
empty. 

Let $l=\left(1+\frac{\zeta}{3}\right)k.$ Note that we may apply Lemma
\ref{lem: Follows from Friedgut-Junta Theorem} with $\frac{\zeta}{3}$
instead of $\zeta$, $\min\left\{ \frac{\epsilon}{2},\frac{1}{d+1}\right\} $
instead of $\epsilon,$ and $\delta$, provided that $\delta$ is
sufficiently small. Doing so, we obtain that $\mu\left(\left(\f_{J}^{B_{i}}\right)^{\uparrow l}\right)>1-\frac{1}{d+1}$
for each $i.$

Choose a set $\boldsymbol{S}\subseteq\left[n\right]\backslash J$
of size $\left\lceil \frac{d+1}{d}l\right\rceil $ uniformly at random.
Choose a uniformly random $\left(d,l,\left\lceil \frac{d+1}{d}l\right\rceil \right)$-cluster
$\left\{ \boldsymbol{C}_{0},\ldots,\boldsymbol{C}_{d}\right\} $ in
$\binom{\boldsymbol{S}}{l}.$ Note that each $\boldsymbol{C}_{i}$
is a uniformly random set in $\binom{\left[n\right]\backslash J}{l}$.
Therefore, 
\[
\Pr\left[\boldsymbol{C}_{i}\notin\left(\f_{J}^{B_{i}}\right)^{\uparrow l}\right]=1-\mu\left(\left(\f_{J}^{B_{i}}\right)^{\uparrow l}\right)<\frac{1}{d+1}.
\]
 A union bound implies that the probability that $\boldsymbol{C}_{i}\notin\left(\f_{J}^{B_{i}}\right)^{\uparrow l}$
for some $i$ is at most 
\[
\sum_{i=0}^{d}\left(1-\mu\left(\left(\f_{J}^{B_{i}}\right)^{\uparrow l}\right)\right)<1.
\]
 Therefore, there exists sets $C_{0}\in\left(\f_{J}^{B_{0}}\right)^{\uparrow l},\ldots,C_{d}\in\left(\f_{J}^{B_{d}}\right)^{\uparrow l}$
that form a $\left(d,l,\left\lceil \frac{d+1}{d}l\right\rceil \right)$-cluster.
By definition, this implies that there exists sets $A_{0},\ldots,A_{d}\in\f$
such that $A_{i}\subseteq B_{i}\cup C_{i}.$ Now the sets $A_{0},\ldots,A_{d}$
form a $\left(d,k,\left\lceil \frac{d+1}{d}l\right\rceil +j\right)$-cluster.
This is a contradiction since 
\[
\left\lceil \frac{d+1}{d}l\right\rceil +j=\left\lceil \frac{d+1}{d}\left(1+\frac{\zeta}{3}\right)k\right\rceil +j\le\left(\frac{d+1}{d}+\zeta\right)k,
\]
 provided that $n_{0}$ is sufficiently large. 
\end{proof}

\subsection{Proof of Proposition \ref{prop:Rough stability result}}

We shall need the following stability result for Frankl's Theorem
that was essentially proved by Ellis, Keller, and the author \cite{ellis2016stability}.
We shall use the following corollary of their work stated by Keller
and the author in \cite{keller2017junta}.
\begin{thm}[{\cite[Proposition 10.11]{keller2017junta}}]
\label{thm:Stability to Frankl's result} For each $\zeta,s>0$,
there exists $C>0$, such that the following holds. Let $\zeta<\frac{k}{n}<\frac{s-1}{s}-\zeta$,
let $\epsilon>0$, and let $\f\subseteq\binom{\left[n\right]}{k}$
be an $s$-wise intersecting family. If $\left|\f\right|\ge\binom{n-1}{k-1}\left(1-\epsilon\right),$
then $\f$ is $C\epsilon^{1+\frac{1}{C}}$-essentially contained in
a star.
\end{thm}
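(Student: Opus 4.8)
The plan follows the junta-method template: first a qualitative \emph{rough stability} statement, then a \emph{bootstrapping} of it to the quantitative bound $C\epsilon^{1+1/C}$ using the Kruskal--Katona consequences established above. Throughout one may assume $\epsilon<\epsilon_1$ for a small constant $\epsilon_1=\epsilon_1(\zeta,s)$: for $\epsilon\ge\epsilon_1$ the conclusion is vacuous once $C$ is large enough that $C\epsilon^{1+1/C}\ge 1$.

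\textbf{Phase 1: rough stability.} I would first prove that for every $\epsilon_0>0$ there is $\delta_0=\delta_0(\epsilon_0,\zeta,s)$ such that an $s$-wise intersecting $\f\subseteq\binom{[n]}{k}$ with $|\f|\ge(1-\delta_0)\binom{n-1}{k-1}$ is $\epsilon_0$-essentially contained in a star. This is the easy half, and it must be self-contained (it cannot quote Proposition~\ref{prop:Rough stability result}, which in turn relies on the present theorem). By the regularity lemma, Theorem~\ref{thm:regularity}, $\f$ is $\epsilon_0$-essentially contained in a junta $\langle\g\rangle$ all of whose regular parts have measure $>\epsilon_0/2$; a union bound of the type used in the junta-approximation argument above --- which needs only $k/n<\frac{s-1}{s}-\zeta$, so that a level $l=(1+\zeta')k$ with $l<\frac{s-1}{s}n$ still leaves room for $s$ sets of size $l$ inside an $l$-set to have empty intersection --- shows that $\langle\g\rangle$ is itself $s$-wise intersecting; and a compactness argument resting on the $p$-biased form of Frankl's Theorem, Theorem~\ref{thm:Frankl's Theorem}, shows that an $s$-wise intersecting $j$-junta whose normalised size is within $\delta_0$ of that of a star, with $k/n$ bounded away from $0$ and $\frac{s-1}{s}$, must be a dictator up to an $\epsilon_0/2$-alteration. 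Combining the three facts gives the rough stability; I treat $\epsilon_0$ as a small constant, to be fixed in terms of $\zeta,s$ at the end.

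\textbf{Phase 2: bootstrapping.} Let $\mathcal S$ be the star that Phase 1 puts $\f$ $\epsilon_0$-close to, say $\mathcal S=\mathcal S_{\{1\}}$ after relabelling; put $\f_1=\f\cap\mathcal S$, $\f_0=\f\setminus\mathcal S$, and let $\mathcal L=\{A\setminus\{1\}:A\in\f_1\}\subseteq\binom{[n]\setminus\{1\}}{k-1}$ be the link of $1$. Set $t=|\f_0|/\binom{n-1}{k}$, so $t\le\epsilon_0/\zeta$ by Phase 1, and from $|\f|=|\mathcal L|+|\f_0|\ge(1-\epsilon)\binom{n-1}{k-1}$ together with $|\mathcal L|\le\binom{n-1}{k-1}$ one gets $\mu(\mathcal L)\ge 1-\epsilon-O_\zeta(t)=:1-\epsilon_2$. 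The goal is $t\le C\epsilon^{1+1/C}$, and the three sub-steps mirror the ``third ingredient'' of the introduction. The combinatorial input is the reformulation of ``$\f$ is $s$-wise intersecting'': for any \emph{distinct} $A_1,\dots,A_{s-1}\in\mathcal L$ and any $B\in\f_0$ one has $A_1\cap\cdots\cap A_{s-1}\cap B\ne\varnothing$, obtained by applying the $s$-wise property to the distinct sets $A_1\cup\{1\},\dots,A_{s-1}\cup\{1\},B\in\f$ and noting that the common point avoids $1$. Fix $l=(1+\frac\zeta2)k$, strictly between $k$ and $\frac{s-1}{s}n$, and carry out an ``up, then down'' Kruskal--Katona sandwich. (i) By Corollary~\ref{cor:kk up}(3) applied to $\mathcal L$ in the ground set $[n]\setminus\{1\}$, $\mu(\mathcal L^{\uparrow(l-1)})\ge 1-C'\epsilon_2^{1+1/C'}$. (ii) A union bound at level $l$ forces $\f_0^{\uparrow l}$ to be small: for a candidate $D\in\f_0^{\uparrow l}$ one chooses $s-1$ spread-out super-sets $C_1,\dots,C_{s-1}$, each containing a member of $\mathcal L$, with $C_1\cap\cdots\cap C_{s-1}\cap D=\varnothing$ --- possible because $l<\frac{s-1}{s}n$ --- and if each $C_i$ genuinely lies in $\mathcal L^{\uparrow(l-1)}$ with pairwise distinct witnesses in $\mathcal L$, then lifting produces $s$ distinct sets of $\f$ with empty common intersection, a contradiction; since a single $C_i$ fails with probability only $O(\epsilon_2^{1+1/C'})$, this yields $\mu(\f_0^{\uparrow l})\le O_{\zeta,s}(\epsilon_2^{1+1/C'})$. (iii) By Lemma~\ref{lem:trivial kk}, $t=\mu(\f_0)\le\mu(\f_0^{\uparrow l})$, up to the harmless normalisation between $\binom{n-1}{k}$ and $\binom{n-1}{l}$, handled as in the proof of Corollary~\ref{cor:kk up}. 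Chaining (i)--(iii) gives an estimate of the shape $t\le C''(\epsilon+t)^{1+1/C''}$; since Phase 1 bounds $t$ by a small constant, this collapses to $t\le C\epsilon^{1+1/C}$ for a suitable $C=C(\zeta,s)$, i.e. $\f$ is $C\epsilon^{1+1/C}$-essentially contained in $\mathcal S$.

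\textbf{Main obstacle.} The architecture is routine; the weight lies in two places. First, the input to Phase 1 --- approximating an $s$-wise intersecting family by an $s$-wise intersecting junta, and identifying the extremal such junta as a dictator, in the regime $k=\Theta(n)$ --- is precisely the content carried by the isoperimetric/Fourier-analytic technology of Friedgut--Regev and of Ellis--Keller--Lifshitz, and there is no shortcut around it. Second, step (ii) of Phase 2 is delicate: one must choose the auxiliary super-sets $C_i$ so that simultaneously (a) $C_1\cap\cdots\cap C_{s-1}\cap D=\varnothing$, where the hypothesis $l<\frac{s-1}{s}n$ enters; (b) each $C_i$ still lands in $\mathcal L^{\uparrow(l-1)}$ with only a polynomial-in-$n$ loss over the bound $C'\epsilon_2^{1+1/C'}$, even though $C_i$ must be drawn from a sub-universe of linear codimension --- a naive choice blows the failure probability up exponentially, so one randomises the partition of $D$ and/or uses a compression argument; and (c) the lifted sets are pairwise distinct, which is genuinely subtle because $s-1$ of the families entering the union bound are copies of the single family $\mathcal L^{\uparrow(l-1)}$, so a transversal need not be injective and one must pry apart its witnesses in $\mathcal L$. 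The remaining bookkeeping --- propagating the error through the up/down Kruskal--Katona steps while preserving the exponent $1+\frac1C$ and absorbing $(1-\epsilon')$-type factors --- is of exactly the flavour already carried out in the proof of Corollary~\ref{cor:kk up}(3).
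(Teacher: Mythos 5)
You should first note that the paper itself contains no proof of this statement: it is imported verbatim as Proposition 10.11 of \cite{keller2017junta}, whose proof rests on the isoperimetric stability machinery of \cite{ellis2016stability}, a quite different route from the one you sketch. So the only question is whether your blind plan would stand on its own. Your Phase 2 is essentially sound, and it is in fact the paper's own bootstrapping argument (the proof of Theorem \ref{thm:stability cluster result} in Section \ref{sec:exact result}) transplanted from clusters to $s$-wise intersecting families, where it is even easier because no union-size constraint is needed. The two ``delicate'' points you flag there are non-issues: sampling the whole configuration $\left(C_{1},\ldots,C_{s-1},D\right)$ jointly, conditioned only on empty intersection, makes each set marginally uniform (exactly as the paper samples a uniformly random cluster in $\binom{\left[n\right]\backslash\left\{ 1\right\} }{l}$), so there is no conditioning loss and the union bound gives $\mu\left(\f_{0}^{\uparrow l}\right)\le\left(s-1\right)\left(1-\mu\left(\mathcal{L}^{\uparrow\left(l-1\right)}\right)\right)$ directly; and distinctness of the lifted witnesses is irrelevant, since $j<s$ distinct members of $\f$ with empty intersection can be padded by arbitrary further members (certainly $\left|\f\right|\ge s$) without enlarging the intersection. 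The Kruskal--Katona steps and the self-improvement $t\le C''\left(\epsilon+t\right)^{1+\frac{1}{C''}}\Rightarrow t\le C\epsilon^{1+\frac{1}{C}}$ are exactly as in Corollary \ref{cor:kk up} and Section \ref{sec:exact result}.

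The genuine gap is Phase 1, and it is not a bookkeeping gap but the heart of the matter. After the regularity lemma (Theorem \ref{thm:regularity}) and the union-bound step showing the approximating junta is $s$-wise intersecting (which do adapt, as in the paper's junta-approximation argument), you still need: every $s$-wise intersecting $j$-junta of measure within $\delta_{0}$ of $\binom{n-1}{k-1}$ is $\epsilon_{0}/2$-close to a dictator, uniformly over $\frac{k}{n}\in\left(\zeta,\frac{s-1}{s}-\zeta\right)$. Your ``compactness argument resting on the $p$-biased form of Frankl's Theorem'' presupposes a biased-measure Frankl theorem with uniqueness --- that a monotone $s$-wise intersecting $\g\subseteq\p\left(J\right)$ satisfies $\mu_{p}\left(\g\right)\le p$ for $p<\frac{s-1}{s}$ with equality only for dictators --- which is nowhere in the paper (Theorem \ref{thm:Frankl's Theorem} is the uniform statement only) and is not proved, or even reduced to available results, in your plan. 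Indeed your own ``main obstacle'' paragraph concedes that the analytic core is exactly the Friedgut--Regev / Ellis--Keller--Lifshitz technology and that ``there is no shortcut around it''; but that core is precisely what the citation to \cite{keller2017junta} supplies. As written, your proposal therefore re-derives the reductions surrounding the cited theorem (which the paper also carries out, for Proposition \ref{prop:Rough stability result} and Theorem \ref{thm:stability cluster result}) while leaving unproved the one ingredient the theorem actually asserts; to make it a proof you would have to establish the junta-level (or biased-measure) uniqueness/stability for $s$-wise intersecting families, and that is where all the work lies.
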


\begin{proof}[Proof of Proposition \ref{prop:Rough stability result}]
 Since the theorem becomes stronger as $\epsilon$ decreases, we
may assume that $\epsilon$ is sufficiently small as a function of
$\zeta,d$. Let $\f\subseteq\binom{\left[n\right]}{k}$ be a family
that does not contain a $\left(d,k,\left(\frac{d+1}{d}+\zeta\right)k\right)$-cluster
and suppose that $\left|\f\right|\ge\binom{n-1}{k-1}\left(1-\delta\right).$
By Proposition \ref{prop:Junta approximation theorem }, there exists
some $\left(d+1\right)$-wise intersecting family $\j$, such that
$\f$ is $\frac{\epsilon}{2}$-essentially contained in $\j.$ This
implies that 
\[
\left|\j\right|\ge\left|\f\right|-\frac{\epsilon}{2}\binom{n-1}{k-1}\ge\left(1-\delta-\frac{\epsilon}{2\zeta}\right)\binom{n-1}{k-1}.
\]
 By Theorem \ref{thm:Stability to Frankl's result}, the family $\j$
is $C\left(\delta+\frac{\epsilon}{2\zeta}\right)^{C}$-essentially
contained in a star, where $C=C\left(\zeta,d\right)>1$. Therefore,
$\f$ is $\frac{\epsilon}{2}+C\left(\delta+\frac{\epsilon}{2\zeta}\right)^{C}$-essentially
contained in a star. This completes the proof since $\frac{\epsilon}{2}+C\left(\delta+\frac{\epsilon}{2\zeta}\right)^{C}<\epsilon$,
provided that $\delta,\epsilon$ are sufficiently small.
\end{proof}

\section{\label{sec:exact result}The bootstrapping step: Proof of theorems
\ref{thm:exact cluster result} and \ref{thm:stability cluster result} }
\begin{proof}[Proof of Theorem \ref{thm:stability cluster result}]
 Note that by increasing $C$ if necessary we may assume that $\epsilon$
is sufficiently small. Let $\epsilon'=\epsilon'\left(\zeta,d\right)$
be sufficiently small. By Proposition (\ref{prop:Rough stability result}),
the family $\f$ is $\epsilon'$-essentially contained in a star,
provided that $\epsilon$ is sufficiently small. Without loss of generality,
it is the star $\mathcal{S}$ of all sets that contain 1. Let $l=\left\lceil k\left(1+\frac{\zeta}{2}\right)\right\rceil .$
Similarly to the proof of Proposition \ref{prop:Junta approximation theorem },
the families 
\[
\left(\f_{\left\{ 1\right\} }^{\left\{ 1\right\} }\right)^{\uparrow l},\ldots,\left(\f_{\left\{ 1\right\} }^{\left\{ 1\right\} }\right)^{\uparrow l},\left(\f_{\left\{ 1\right\} }^{\varnothing}\right)^{\uparrow l}
\]
 do not mutually contain a $\left(d,l,\left\lceil \frac{d+1}{d}l\right\rceil \right)$-cluster.
Let $\left\{ \boldsymbol{A}_{0},\ldots,\boldsymbol{A}_{d}\right\} \subseteq\binom{\left[n\right]\backslash\left\{ 1\right\} }{l}$
be a uniformly random $\left(d,l,\left\lceil \frac{d+1}{d}l\right\rceil \right)$-cluster.
We have
\begin{align}
1 & =\Pr\left[\exists i:\,\boldsymbol{A}_{i}\notin\f\right]\le\sum_{i=0}^{d}\Pr\left[\boldsymbol{A}_{i}\notin\f\right]\nonumber \\
 & =d\left(1-\mu\left(\left(\f_{\left\{ 1\right\} }^{\left\{ 1\right\} }\right)\right)\right)+\left(1-\mu\left(\f_{\left\{ 1\right\} }^{\varnothing}\right)\right).\label{eq:union bound}
\end{align}

Write $\left|\f\cap\mathcal{S}\right|=\binom{n-1}{k-1}\left(1-\epsilon''\right).$
By Corollary \ref{cor:kk up},
\[
\left|\left(\f\cap\mathcal{S}\right)^{\uparrow l}\right|\ge\binom{n-1}{l-1}\left(1-C'\epsilon''{}^{\frac{1}{C'}}\right),
\]
where $C'=C'\left(\zeta,d\right)>1.$ Hence, $\mu\left(\left(\f_{\left\{ 1\right\} }^{\left\{ 1\right\} }\right)^{\uparrow l}\right)\ge1-C'\epsilon''{}^{\frac{1}{C'}}.$
By (\ref{eq:union bound}), 
\[
\mu\left(\left(\f_{\left\{ 1\right\} }^{\varnothing}\right)^{\uparrow l}\right)\le d\left(1-\mu\left(\left(\f_{\left\{ 1\right\} }^{\left\{ 1\right\} }\right)^{\uparrow l}\right)\right)\le dC'\epsilon''^{\frac{1}{C'}}.
\]
 Therefore, 
\begin{equation}
\mu\left(\f\backslash\mathcal{S}\right)\le\mu\left(\f_{\left\{ 1\right\} }^{\varnothing}\right)\le\mu\left(\left(\f_{\left\{ 1\right\} }^{\varnothing}\right)^{\uparrow l}\right)\le dC'\epsilon''^{1+\frac{1}{C''}}.\label{eq:fmins}
\end{equation}
Hence, 
\begin{align*}
\frac{k}{n}\left(1-\epsilon\right) & \le\mu\left(\f\right)=\mu\left(\f\cap\mathcal{S}\right)+\mu\left(\f\backslash\mathcal{S}\right)\\
 & \le\frac{k}{n}\left(1-\epsilon''\right)+\min\left\{ \left(1-\frac{k}{n}\right)dC'\epsilon''^{1+\frac{1}{C'}},\epsilon'\right\} .
\end{align*}
 Rearranging, we obtain 
\begin{equation}
\epsilon''\le\epsilon+\min\left\{ \frac{\left(1-\frac{k}{n}\right)}{\frac{k}{n}}dC'\epsilon''^{1+\frac{1}{C'}},\frac{n}{k}\epsilon'\right\} .\label{eq:primeprime}
\end{equation}
 In particular, $\epsilon''\le\epsilon+\frac{1}{\zeta}\epsilon'$.
Since both $\epsilon,\epsilon'$ may be assumed to be arbitrarily
small as a function of $d,\zeta,C'$, we may assume that $\epsilon''$
is sufficiently small to have 
\[
\frac{\left(1-\frac{k}{n}\right)}{\frac{k}{n}}dC'\epsilon''^{1+\frac{1}{C'}}\le\frac{\epsilon''}{2}.
\]
 Combining with (\ref{eq:primeprime}), we have $\epsilon''\le2\epsilon.$
Hence, by (\ref{eq:fmins})
\[
\mu\left(\f\backslash\mathcal{S}\right)\le dC'\epsilon''^{1+\frac{1}{C''}}=\Theta_{d,\zeta}\left(\epsilon\right)^{1+\frac{1}{C'}}\le C\epsilon^{1+\frac{1}{C}},
\]
 provided that $C$ is sufficiently large. This completes the proof
of the theorem. 
\end{proof}
Theorem \ref{thm:exact cluster result} now follows easily.
\begin{proof}[Proof of Theorem \ref{thm:exact cluster result}]
 The Theorem immediately follows from Theorem \ref{thm:exact cluster result}
if $\frac{k}{n}\le\frac{d}{d+1}-\zeta'$ for any fixed $\zeta'=\zeta'\left(d,\epsilon\right)>0$,
by substituting $\epsilon=0$. On the other hand, it follows from
Theorem \ref{thm:Frankl's Theorem} if $n\le\left(\frac{d+1}{d}+\zeta\right)k.$
Hence, the theorem follows by substituting $\zeta'=\frac{d}{d+1}-\frac{1}{\left(\frac{d+1}{d}+\zeta\right)}$.
\end{proof}

\section{Open problem}

We conjecture that Theorem \ref{thm:exact cluster result} in fact
holds for all $k\ge C\log n$ for a sufficiently large constant $C.$ 
\begin{conjecture}
\label{conj:exact result}For each $d,\epsilon>0$ there exists a
constant $C=C\left(d,\epsilon\right)$, such that the following holds.
Let \textup{$C\log n\le k\le\frac{d+1}{d}n,$}\textup{\emph{ }}\textup{and
let $\mathcal{F}\subseteq\binom{\left[n\right]}{k}$ be a family that
doe}s not\textup{ contain a $\left(d,k,\left(\frac{d+1}{d}+\zeta\right)k\right)$-cluster.
Then $\left|\mathcal{F}\right|\le\binom{n-1}{k-1}$, with equality
only if and only if $\f$ is a star. }
\end{conjecture}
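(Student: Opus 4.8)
The plan is to keep the three-ingredient structure of the junta method intact and to push each ingredient from the regime $k=\Theta(n)$ down to $k\ge C\log n$. Since the ranges $k\ge\zeta n$ are already covered by Theorem \ref{thm:exact cluster result} (which, at the very top of the range, in turn reduces to Frankl's Theorem \ref{thm:Frankl's Theorem}), it remains to treat $C\log n\le k\le\zeta n$ for a small fixed $\zeta=\zeta(d)$. In this window the extremal counting is in fact more comfortable than in the linear regime, because every known competitor of the star is far smaller: the Frankl--F\"uredi family of Example \ref{Example Frankl-Furedi} has size $(n/k)^{k}$, which by Stirling equals $\binom{n-1}{k-1}\cdot n^{1-o(1)}e^{-k(1+o(1))}$ and hence is $o\left(\binom{n-1}{k-1}\right)$ once $k\ge C\log n$ with $C$ large; this is also why $C\log n$ is the natural threshold, being essentially the point below which $\binom{n-1}{k-1}$ stops being the right answer.

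Ingredients 2 and 3 should be reachable by adapting existing tools. For Ingredient 3, the Kruskal--Katona bootstrapping of Section \ref{sec:exact result}, one needs a version of Corollary \ref{cor:kk up}(3) valid for $k\ge C\log n$ and $l=\lceil k(1+\frac{\zeta}{2})\rceil$; the key point is that Kruskal--Katona shadow expansion is \emph{multiplicative}, so a lexicographic family of measure $\mu$ still satisfies $\mu\left(\mathcal{L}^{\uparrow l}\right)\ge\mu^{\,1+\Omega_{\zeta}(1)}$ even though $l-k=\Theta(\log n)$ is sublinear, and the deficiency of a near-star propagates upward with the same power $1+\frac{1}{C}$ used in the proof of Theorem \ref{thm:stability cluster result}. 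Re-deriving Corollary \ref{cor:kk up} and re-running the argument of Section \ref{sec:exact result} with this sparse input, while carefully tracking the now subpolynomial quantities $\binom{n-m}{k-1}/\binom{n-1}{k-1}$, should give Ingredient 3 in the full range. For Ingredient 2 one needs a stability version of Frankl's Theorem \ref{thm:Frankl's Theorem} for $(d+1)$-wise intersecting families down to $k\ge C\log n$; this should follow by feeding the $(d+1)$-wise intersecting hypothesis into the sparse-regime junta method of \cite{keller2017junta} (which already handles all $k$ in the relevant EKR/simplex range) and invoking Theorem \ref{thm:Stability to Frankl's result} only on the dense window it covers.

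The genuinely new work, and the step I expect to be the main obstacle, is Ingredient 1: every family free of a $(d,k,(\frac{d+1}{d}+\zeta)k)$-cluster is $\epsilon$-essentially contained in a $(d+1)$-wise intersecting junta \emph{when $k$ is as small as $C\log n$}. The proof of Proposition \ref{prop:Junta approximation theorem } uses two tools specific to the dense regime: the regularity lemma of \cite{ellis2016stabilityfor}, stated for $k=\Theta(n)$, and Lemma \ref{lem: Follows from Friedgut-Junta Theorem}, whose proof invokes Friedgut's junta theorem at a bias $p=k/n$ bounded away from $0$ and $1$. Both must be replaced by sparse analogues: for the regularization step one would use the sparse regularity machinery of \cite{keller2017junta}, while for the upper-shadow-filling step --- a $(\lceil 1/\delta\rceil,\delta)$-regular family of non-negligible measure has $\mu(\mathcal{F}^{\uparrow l})=1-o(1)$ for $l=(1+\zeta')k$ --- one needs a statement valid as $p=k/n\to 0$, which cannot come from Friedgut's theorem. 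The natural substitute is a spread/quasirandomness argument: a $(\lceil 1/\delta\rceil,\delta)$-regular family is essentially $(1+o(1))$-spread, and spread families have upper shadow at level $(1+\zeta')k$ of measure $1-o(1)$ even for $k=C\log n$, via the fractional Kruskal--Katona estimates underlying the spread lemma. Granting these two sparse analogues, the remainder of the proof of Proposition \ref{prop:Junta approximation theorem } --- choosing a random $(d,l,\lceil\frac{d+1}{d}l\rceil)$-cluster inside a random $\lceil\frac{d+1}{d}l\rceil$-set disjoint from the junta $J$, and a union bound over the $d+1$ links $\f_{J}^{B_i}$ --- goes through verbatim, since it never used $k/n=\Theta(1)$. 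Chaining Ingredients 1--3 as in the proof sketch then yields the conjecture; the place where I expect to spend the most effort is making the sparse upper-shadow-filling lemma quantitatively strong enough, with error $o(1/d)$, all the way down to $k\asymp\log n$.
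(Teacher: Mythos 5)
The statement you are proving is not a theorem of the paper at all: it is Conjecture \ref{conj:exact result}, stated in the final ``Open problem'' section, and the paper offers no proof of it (its results stop at $k\ge\zeta n$, Theorem \ref{thm:exact cluster result}). So what you have written has to be judged as an attempted resolution of an open problem, and as such it is a research plan rather than a proof. The three items you yourself flag as needing ``sparse analogues'' --- a regularity lemma valid for $k=O(\log n)$, a replacement for Lemma \ref{lem: Follows from Friedgut-Junta Theorem} when the bias $p=k/n\to 0$ (Friedgut's junta theorem genuinely fails to give dimension-free juntas in that regime), and a stability version of Frankl's Theorem \ref{thm:Frankl's Theorem} below $k=\zeta n$ --- are precisely the hard content of the conjecture; asserting that they ``should follow'' from spread/quasirandomness heuristics or from \cite{keller2017junta} does not establish them. (Your observation about the Kruskal--Katona step is the one place where the adaptation is plausibly routine: the power gain in Corollary \ref{cor:kk up}(3) really comes from $l-k\ge\Omega_{\zeta}(k)$ rather than $l-k\ge\zeta n$, so Ingredient 3 is not the obstruction.)

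There is also a quantitative gap that your plan does not address and that undermines the scheme as literally stated. The paper's notion of $\epsilon$-essential containment means $\left|\mathcal{F}\backslash\mathcal{G}\right|\le\epsilon\binom{n}{k}$, and the whole three-ingredient argument works in the dense regime because the target family (the star) has measure $\frac{k}{n}=\Theta(1)$, so a constant-$\epsilon$ approximation is meaningful. When $k=O(\log n)$ the star has measure $\frac{k}{n}=o(1)$, so being $\epsilon$-essentially contained in a junta for a constant $\epsilon$ is vacuous: the error term swamps the entire family. Every step of the chain --- Proposition \ref{prop:Junta approximation theorem }, Proposition \ref{prop:Rough stability result}, and the bootstrapping in Section \ref{sec:exact result}, where one needs $\frac{n}{k}\epsilon'$ to be small --- would have to be reproved with error terms $o(k/n)$ (in fact much smaller, to survive the union bounds and the final comparison with $\left|\mathcal{S}\backslash\mathcal{F}\right|$), and no constant-size junta approximation of the type you invoke can deliver that. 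This renormalization problem is exactly why the dense-regime argument does not push down to $k\asymp\log n$ and why the statement is left open; your proposal would need a genuinely new mechanism here, not just sparse versions of the same lemmas.
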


We would also like to mention that one cannot strengthen Conjecture
\ref{conj:exact result} by replacing the requirement that $\f$ does
not contain a $\left(d,k,\left(\frac{d+1}{d}+\zeta\right)k\right)$-cluster
by the requirement that $\f$ does not contain a $\left(d,k,\frac{d+1}{d}k\right)$-cluster.
For instance, in the case where $d=2$ one can take the following
example known as the complete \emph{odd bipartite hypergraph}. 
\begin{example}
\label{exa: Keevash counter }The family $\f:=\left\{ A\in\binom{\left[n\right]}{k}|\,\left|A\cap\left[\frac{n}{2}\right]\right|\text{ is odd}\right\} $
does not contain a $\left(2,k,\frac{3}{2}k\right)$-cluster and its
measure is asymptotically $\frac{1}{2}.$ Therefore, for any $\zeta>0$
and $\frac{k}{n}<\frac{1}{2}-\zeta$, we have $\left|\f\right|\ge\binom{n-1}{k-1}$,
provided that $n\ge n_{0}\left(\zeta\right).$ 
\end{example}

\subsection*{Acknowledgment}

I would like to thank Nathan Keller and Zoltán Füredi for many helpful
discussions.

\end{document}